\newcommand\nc{\newcommand}
\let\Im\relax
\DeclareMathOperator\Im{{{Im}}}
\nc{\ES}{\mathsf{ES}}
\nc{\MZV}{\mathsf{MZV}}
\nc{\CMZV}{\mathsf{CMZV}}
\nc{\MtV}{\mathsf{MtV}}
\nc{\AMtV}{\mathsf{AMtV}}
\nc{\MTV}{\mathsf{MTV}}
\nc{\MSV}{\mathsf{MSV}}
\nc{\MMV}{\mathsf{MMV}}
\nc{\MMVo}{\mathsf{MMVo}}
\nc{\MMVe}{\mathsf{MMVe}}
\nc{\AMMV}{\mathsf{AMMV}}
\nc{\sha}{\shuffle}
\nc{\si}{\sigma}
\nc{\gd}{\delta}
\nc{\ola}{\overleftarrow}
\nc{\ora}{\overrightarrow}
\nc{\lra}{\longrightarrow}
\nc{\Lra}{\Longrightarrow}
\nc\Res{{\rm Res}}
\nc\setX{{\mathsf{X}}}
\nc\fA{{\mathfrak{A}}}
\nc\evaM{{\texttt{M}}}
\nc\evaML{{\text{\em{\texttt{M}}}}}
\nc\z{{\texttt{z}}}
\nc\ta{{\texttt{a}}}
\nc\ty{{\texttt{y}}}
\nc\tx{{\texttt{x}}}
\nc\td{{\texttt{d}}}
\nc\tz{{\texttt{z}}}
\nc\txp{{\tx_1}} % textstyle x positive 1
\nc\txn{{\tx_{-1}}} % textstyle x negative 1
\nc\neo{{1}}
\nc{\yi}{{1}}
\nc\one{{-1}}
\nc\om{{\omega}}
\nc\tom{{\tilde{\omega}}}
\nc\omn{\omega_{-1}}
\nc\omz{\omega_0}
\nc\omp{\omega_{1}}
\nc\eps{{\varepsilon}}
\nc{\bfp}{{\bf p}}
\nc{\bfq}{{\bf q}}
\nc{\bfu}{{\bf u}}
\nc{\bfv}{{\bf v}}
\nc{\bfw}{{\bf w}}
\nc{\bfy}{{\bf y}}
\nc{\bfga}{{\boldsymbol{\sl{\alpha}}}}
\nc{\bfe}{{\boldsymbol{\sl{e}}}}
\nc{\bfi}{{\boldsymbol{\sl{i}}}}
\nc{\bfj}{{\boldsymbol{\sl{j}}}}
\nc{\bfk}{{\boldsymbol{\sl{k}}}}
\nc{\bfl}{{\boldsymbol{\sl{l}}}}
\nc{\bfm}{{\boldsymbol{\sl{m}}}}
\nc{\bfn}{{\boldsymbol{\sl{n}}}}
\nc{\bfs}{{\boldsymbol{\sl{s}}}}
\nc{\bfr}{{\boldsymbol{\sl{r}}}}
\nc{\bft}{{\boldsymbol{\sl{t}}}}
\nc{\bfx}{{\boldsymbol{\sl{x}}}}
\nc{\bfz}{{\boldsymbol{\sl{z}}}}
\nc\bfmu{{\boldsymbol \mu}}
\nc\bfgl{{\boldsymbol \lambda}}
\nc\bfsi{{\boldsymbol \sigma}}
\nc\bfet{{\boldsymbol \eta}}
\nc\bfeta{{\boldsymbol \eta}}
\nc\bfeps{{\boldsymbol \varepsilon}}
\nc\bfone{{\bf 1}}
\def\int{\displaystyle\!int}
\def\lim{\displaystyle\!lim}
\def\sum{\displaystyle\!sum}
\def\sup{\displaystyle\!sup}
\def\inf{\displaystyle\!inf}
\def\cap{\displaystyle\!cap}
\def\max{\displaystyle\!max}
\def\min{\displaystyle\!min}
\def\frac{\displaystyle\!frac}
\let\oldsection\section
\renewcommand\section{\setcounter{equation}{0}\oldsection}
\DeclareMathOperator{\Li}{Li}
\nc\UU{\mbox{\bfseries U}}
\nc\FF{\mbox{\bfseries \itshape F}}
\nc\h{\mbox{\bfseries \itshape h}}\nc\dd{\mbox{d}}
\nc\g{\mbox{\bfseries \itshape g}}
\nc\xx{\mbox{\bfseries \itshape x}}
\nc\gl{{\lambda}}
\nc\gD{{\Delta}}
\nc\tlga{{\tilde{\alpha}}}
\nc\ga{{\alpha}}
\nc\gb{{\beta}}
 \nc{\gam}{{\gamma}}
 \nc{\gG}{{\Gamma}}
 \nc{\vep}{{\varepsilon}}
 \nc{\gs}{{\sigma}}
 \nc{\gth}{{\theta}}
 \nc{\tlgth}{\tilde{\theta}}
 \nc{\gr}{{\rho}}
 \nc{\gL}{{\Lambda}}
 \nc{\gS}{{\Sigma}}
 \nc{\gf}{{\varphi}}
 \nc{\gk}{{\kappa}}
 \nc{\gm}{{\mu}}
 \nc{\gM}{{M}}
 \nc{\gz}{{\zeta}}
 \nc{\tlg}{{\tilde{g}}}
 \nc{\tllg}{{\tilde{\tilde{g}}}}
 \nc{\tlG}{{\tilde{G}}}
 \nc{\tlh}{{\tilde{h}}}
 \nc{\tllh}{{\tilde{\tilde{h}}}}
 \nc{\tlH}{{\tilde{H}}}
\nc{\tle}{{\tilde{e}}}
 \nc{\tlle}{{\tilde{\tilde{e}}}}
 \nc{\tlE}{{\tilde{E}}}
 \nc{\tlgz}{{\tilde{\zeta}}}
 \nc{\gO}{{\Omega}}
 \nc{\sif}{{\mathcal S}}
 \nc{\gt}{{\tau}}
 \nc{\bt}{{\chi}}
 \nc{\tlt}{{\tilde{t}}}
 \nc{\tlgk}{{\tilde{\gk}}}
 \nc{\binn}{{\binom{2n}{n}}}
 \nc{\bimm}{{\binom{2m}{m}}}
\def\N{\mathbb{N}}
\def\Q{\mathbb{Q}}
\def\ze{\zeta}
\def\xx{\left(\frac{1-x}{1+x} \right)}
\nc\divg{{\text{div}}}
\theoremstyle{plain}
\newtheorem{thm}{Theorem}[section]
\newtheorem{cor}[thm]{Corollary}
\theoremstyle{definition}
\newtheorem{re}[thm]{Remark}
\newtheorem{ex}[thm]{Example}
\nc{\myone}{{1}}
\nc{\myO}{{\mathsf O}}
\nc{\myL}{{\mathsf L}}
\begin{document}
%%%%%%%%%%%%%%%%%%%% title %%%%%%%%%%%%%%%%%%%%%%%%%%%%%%%%%%%%%%%%%%%%%%%%
\title{\bf Some Variants of Ap\'{e}ry-Type Series and Level Four Colored Multiple Zeta Values}
 % Apéry
\author{
{Ce Xu${}^{a,}$\thanks{Email: cexu2020@ahnu.edu.cn,  ORCID 0000-0002-0059-7420.}\ \ and Jianqiang Zhao${}^{b,}$\thanks{Email: zhaoj@ihes.fr, corresponding author, ORCID 0000-0003-1407-4230.}}\\[1mm]
\small a. School of Mathematics and Statistics, Anhui Normal University, Wuhu 241002, PRC\\
\small b. Department of Mathematics, The Bishop's School, La Jolla, CA 92037, USA}

\date{}
\maketitle

\noindent{\bf Abstract.} In this paper, we study Ap\'{e}ry-type series involving the central binomial coefficients
\begin{align*}
\sum_{n_1>\cdots>n_d>0} \frac1{4^{n_1}}\binom{2n_1}{n_1} \frac{1}{n_1^{s_1}\cdots n_d^{s_d}}
\end{align*}
and its variations where the summation indices may have mixed parities and some or all ``$>$'' are replaced by ``$\ge$'', as long as the series are defined. These sums have naturally appeared in the calculation of massive Feynman integrals by the work of Jegerlehner, Kalmykov and Veretin. We show that all these sums can be expressed as $\mathbb Q$-linear combinations of the real and/or imaginary parts of the colored multiple zeta values at level four, i.e., special values of multiple polylogarithms at fourth roots of unity. We also show that the corresponding series where ${\binom{2n_1}{n_1}}/4^{n_1}$ is replaced by ${\binom{2n_1}{n_1}}^2/16^{n_1}$ can be expressed in a similar way except for a possible extra factor of $1/\pi$.

\medskip
\noindent{\bf Keywords}: Ap\'{e}ry-type series, central binomial coefficient, colored multiple zeta value, multiple polylogarithm, iterated integral. %Apéry-type series

\medskip
\noindent{\bf AMS Subject Classifications (2020):} Primary 11M32; Secondary 11B65, 11B37, 44A05, 33B30.
\section{Introduction}
Set  $a_0=a_0(x)=1$, $a_n(x)=\frac1{4^{n}}\binom{2n}{n}x^{2n}$ and $a_n=a_n(1)$ for all $n\in\N$.
In the first part of this series \cite{XuZhao2022a}, we considered the following Ap\'{e}ry-type inverse binomial series
\begin{align*}
\sum_{n_1>\cdots>n_d>0}   \frac{a_{n_1}^{-1}}{n_1^{s_1}\cdots n_d^{s_d}} \quad
\end{align*}
for all positive integers $s_1\ge 2, s_2,\dots,s_d\ge 1$, and their variants with some or all of
$n_j$'s replaced by $2n_j\pm 1$ and some or all of ``$>$'' replaced by ``$\ge$'', as long
as the series are defined. By generalizing Akhilesh's ideas in \cite{Akhilesh1,Akhilesh}
we showed that the above sum can be expressed as $\Q$-linear combinations of the real and/or
the imaginary parts of some colored multiple zeta values of level 4, i.e., multiple polylogarithms evaluated at 4th roots of unity.
We also proved similar results after replacing $a_{n_1}^{-1}$ by $a_{n_1}^{-2}$.

In this paper, we will turn to another class of Ap\'{e}ry-type series defined by
\begin{align}\label{defn-gaAll}
\sum_{n_1>\cdots>n_d>0}  \frac{a_{n_1}}{n_1^{s_1}\cdots n_d^{s_d}} \quad\text{and}\quad
\sum_{n_1>\cdots>n_d>0}  \frac{a_{n_1}^2}{n_1^{s_1}\cdots n_d^{s_d}}
\end{align}
and similar variants with the summation indices having mixed parities.
We call these \emph{Ap\'{e}ry-type (central) binomial series} (not inverse type).
Instances of this type already appeared in Leshchiner's work \cite{Leshchiner} extending Ap\'ery's formula.
These sums have also appeared in the calculation of massive Feynman integrals \cite{JegerlehnerKV2003}.

Recall that for any positive integers $s_1,\dots,s_d$ and $N$th roots of unity $z_1,\dotsc,z_d$ the
\emph{colored multiple zeta values} (CMZVs) of level $N$ are defined by
\begin{equation}\label{equ:defnMPL}
\Li_{\bfs}(\bfz):=\sum_{n_1>\cdots>n_d>0}
\frac{z_1^{n_1}\dots z_d^{n_d}}{n_1^{s_1} \dots n_d^{s_d}},
\end{equation}
which converge if $(s_1,z_1)\ne (1,1)$ (see \cite{Racinet2002} and \cite[Ch. 15]{Zhao2016}).
Starting from the 1990s,
mathematicians and theoretical physicists have been attracted to the study of
colored multiple zeta values due to their frequent and
sometimes unexpected appearance in quite a few different branches of mathematics and physics,
in particular, the computation of many Feyman diagrams (see, e.g., \cite{BognerLu2013,Broadhurst1996,Broadhurst1999,BroadhurstKr1997,Brown2011,Duhr2015,Kreimer2015,Panzer2014a,ParkerSSV2015,Todorov2014}).

The CMZVs can be expressed using Chen's iterated integrals (see \cite[Sec.~2.1]{Zhao2016}):
\begin{align*}
 \Li_{\bfs}(\bfz)=\int_0^1 \ta^{s_1-1} \tx_{a_1}\cdots \ta^{s_d-1} \tx_{a_d},
\end{align*}
where $\ta=dt/t$, $\tx_a=dt/(a-t)$, and $a_j=1/(z_1\cdots z_j)$ for all $j=1,\dots,d$. The theory of iterated integrals was developed firstly by K.T. Chen in the 1960's. It has played important roles in the study of algebraic topology and algebraic geometry in the past half century. Its simplest form is
\begin{align*}
\int_0^1 f_1(t)dtf_{2}(t)dt\cdots f_p(t)dt
=&\, \int_0^1 f_1(t)dt\circ f_{2}(t)dt\circ \cdots \circ f_p(t)dt \\
:=&\, \int\limits_{1>t_1>\cdots>t_p>0}f_1(t_1)f_{2}(t_{2})\cdots f_p(t_p)dt_1dt_2\cdots dt_p.
\end{align*}
One can extend these to iterated integrals over any piecewise smooth path on the complex plane via pull-backs.
We refer the interested reader to Chen's original work \cite{KTChen1971,KTChen1977} for more details.

The key idea to compute the infinite sums \eqref{defn-gaAll} and their odd-indexed variations is to derive some recursive formulas for the ``tails'' of a variable version of these series, which are given by the following (see also \eqref{an-it1}--\eqref{an-it4} and \eqref{equ-gbE-depthd}).

\bigskip
\noindent
\textbf{Recursive Formulas.} For all $n\ge0$, $s\ge 1$, and $y\in(-\pi/2,\pi/2)$, we have
\begin{align*}
\sum_{m>n} \frac{a_m(\sin y)}{(2m)^s} =&\, \int_0^y (\cot t\,dt)^{s-1} (1-\csc t \,dt\circ\sec t)a_n(\sin t) \tan t\,dt, \\
\sum_{m>n} \frac{a_m(\sin y)}{(2m+1)^s} =&\,\csc y \int_0^y (\cot t\,dt)^{s-1} (1-dt\circ\csc t\sec t)a_n(\sin t) \sin t\tan t\,dt,\\
\sum_{m\ge n} \frac{a_m(\sin y)}{(2m+1)^s} =&\,\csc y\int_0^y (\cot t\,dt)^{s-1} (\csc t-dt\circ\sec t) a_n(\sin t) \tan t\,dt,\\
\sum_{m>n} \frac{a_m(\sin y)}{2m-1} =&\,\cos y\int_0^y a_n(\sin t)  \tan t \sec t \, dt,  \\
\sum_{m>n} \frac{a_m(\sin y)}{(2m-1)^s}
=&\,  \sin y \int_0^y (\cot t\,dt)^{s-2} (\cot^2 t\,dt) a_n(\sin t)  \tan t \sec t \, dt \quad (s\ge2).
\end{align*}
Here we have used a notation generalizing that for the iterated integrals.
See the beginning of the next section for details.

We remark that a particular type of odd variations already appeared implicitly
in \cite[(A.25)]{JegerlehnerKV2003}. Indeed, it is easy to
verify that $(2n)a_n=(2n-1)a_{n-1}$ so that for any function $S$
\begin{align}\label{equ-NeedOddVar}
 \frac12 \sum_{n\ge0}\frac{a_n}{(n+1)^c}S(n)=\sum_{n\ge 1}\frac{a_{n-1}}{(2n)n^{c-1}}S(n-1)
 =\sum_{n>0}\frac{a_n}{(2n-1)n^{c-1}}S(n-1).
\end{align}

We now summarize the content of this paper.
In the next three sections, we will repeatedly apply the recursive formulas in the above to
obtain the iterated integral expressions of three general Ap\'ery-type binomial series. In Section 5,
we will analyze these iterated integrals carefully to prove the main result of this paper relating these
sums to the CMZVs of level 4. We then apply a beta integral to derive the iterated integral expressions
for the corresponding Ap\'ery-type series in which $a_n^2$ appears. In the last section, we
answer some questions in our previous work \cite{XuZhao2021b} and provide a few enlightening examples.
We have computed many other examples and attach them as two appendices to this paper.

\section{Ap\'ery-type central binomial series}
In this section, we will consider the Ap\'{e}ry-type series defined in \eqref{defn-gaAll}.
We will need to extend Chen's iterated integrals by combining 1-forms and functions as follows.
For any $r\in\N$, 1-forms $f_1(t)\,dt,\dots,f_{r+1}(t)\,dt$ and functions $F_1(t),\dots,F_r(t)$,
 $G_1(t),\dots,G_r(t)$, we extend the definition of iterated integrals by setting recursively
\begin{align*}
&\, \int_a^b \bigg( \prod_{j=1}^{r}{}\hskip-2.35ex{\scriptstyle \circ}\hskip1ex \Big[F_j(t)+ f_j(t)\,dt\circ G_j(t)\Big] \bigg)\circ f_{r+1}(t)\,dt\\
:=&\, \int_a^b  \bigg( \prod_{j=1}^{r-1}{}\hskip-2.35ex{\scriptstyle \circ}\hskip1ex \Big[F_j(t)+ f_j(t)\,dt\circ G_j(t)\Big] \bigg)\circ\big(F_r(t)f_{r+1}(t)\big)\,dt\\
+&\, \int_a^b \bigg( \prod_{j=1}^{r-1}{}\hskip-2.35ex{\scriptstyle \circ}\hskip1ex \Big[F_j(t)+ f_j(t)\,dt\circ G_j(t)\Big] \bigg)\circ f_r(t)\,dt \circ G_r(t)f_{r+1}(t)\,dt,
\end{align*}
where
$$
\prod_{j=1}^{r}{}\hskip-2.35ex{\scriptstyle \circ}\hskip1ex \ga_j=\ga_1\circ\cdots\circ\ga_r.
$$

Our first result concerns the tails of the Ap\'ery-type series \eqref{defn-gaAll}. To be consistent with all the major theorems
of this paper, we formulate it as an even-indexed variation. For $s\in\N$ we define
\begin{align}
p_s(t):=&\, \tan t\,dt \bigg(\frac{dt}{\tan t}\bigg)^{s-1} ( 1-\csc t \,dt\circ\sec t). \label{defn-p}
\end{align}

\begin{thm} \label{thm-ga}
For all $n\in\N_0$, $\bfs=(s_1,\dots,s_d)\in\N^d$ and $y\in[-\pi/2,\pi/2]$ we have
\begin{align*}
\ga(\bfs;\sin y)_n:=&\, \sum_{n_1>\cdots > n_d>n} \frac{a_{n_1}(\sin y)}{(2n_1)^{s_1}\cdots (2n_d)^{s_d}} \\
=&\,   \cot y \frac{d}{dy} \int_0^y p_{s_1} \circ\cdots \circ p_{s_d} \circ a_{n}(\sin t) \tan t\,dt.
\end{align*}
Here if $y=\pm \pi/2$ then the right-hand side of the above is defined to be the limit as $y\to \pm \pi/2$.
\end{thm}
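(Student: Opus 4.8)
The plan is to induct on the depth $d$, with the depth-one case serving as the engine and the inductive step amounting to inserting one more $p_{s}$-block on the left of the iterated integral. For the base case $d=1$, I would start from the recursive formula $\sum_{m>n} a_m(\sin y)/(2m)^s = \int_0^y (\cot t\,dt)^{s-1}(1-\csc t\,dt\circ\sec t)a_n(\sin t)\tan t\,dt$ stated in the Recursive Formulas box, which I am allowed to assume. The goal is to rewrite its right-hand side in the ``$\cot y\,\frac{d}{dy}$ of $p_s$-integral'' shape. Note $p_s(t) = \tan t\,dt\,(dt/\tan t)^{s-1}(1-\csc t\,dt\circ\sec t)$ has a $\tan t\,dt$ one-form at the \emph{front} rather than $a_n(\sin t)\tan t\,dt$ at the \emph{back}. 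The link is the identity $\frac{d}{dy}\int_0^y \tan t\,dt\circ (\cdots) = \tan y\,\int_0^y(\cdots)$, i.e. differentiating an iterated integral peels off the leftmost one-form and evaluates the remaining (shorter) iterated integral at the upper limit; multiplying by $\cot y$ then cancels the $\tan y$. So $\cot y\,\frac{d}{dy}\int_0^y p_{s}\circ a_n(\sin t)\tan t\,dt = \int_0^y (dt/\tan t)^{s-1}(1-\csc t\,dt\circ\sec t)\,a_n(\sin t)\tan t\,dt$, which is exactly the recursive formula. This handles $d=1$ after checking that $(dt/\tan t)^{s-1} = (\cot t\,dt)^{s-1}$ as one-forms and that the extended-iterated-integral bracket $\bigl[F_j + f_j\,dt\circ G_j\bigr]$ expands correctly against the trailing $a_n(\sin t)\tan t\,dt$ per the recursive definition given just above the theorem.

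For the inductive step, suppose the formula holds for depth $d-1$ and all $n$. Split the depth-$d$ sum on the innermost-but-one index by writing
\begin{align*}
\ga(\bfs;\sin y)_n = \sum_{n_1>\cdots>n_{d-1}>m>n} \frac{a_{n_1}(\sin y)}{(2n_1)^{s_1}\cdots(2n_{d-1})^{s_{d-1}}}\cdot\frac{1}{(2m)^{s_d}}.
\end{align*}
Hmm — this is not quite a clean factorization because $a_{n_1}$ couples everything. The cleaner route is to induct from the \emph{inside out}: fix $n_d = m$ and observe
\begin{align*}
\ga((s_1,\dots,s_d);\sin y)_n = \sum_{m>n}\frac{1}{(2m)^{s_d}}\,\ga((s_1,\dots,s_{d-1});\sin y)_m,
\end{align*}
so by the induction hypothesis the inner factor is $\cot y\,\frac{d}{dy}\int_0^y p_{s_1}\circ\cdots\circ p_{s_{d-1}}\circ a_m(\sin t)\tan t\,dt$. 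Now I would exchange the sum over $m$ with the iterated integral and the $\cot y\,\frac{d}{dy}$, reducing the problem to computing $\sum_{m>n} a_m(\sin y)/(2m)^{s_d}$ \emph{with the weight $a_m$ sitting inside an iterated integral against the upper variable}. Concretely, one wants the identity
\begin{align*}
\sum_{m>n}\frac{1}{(2m)^{s_d}}\Bigl(\cot y\,\tfrac{d}{dy}\int_0^y W\circ a_m(\sin t)\tan t\,dt\Bigr) = \cot y\,\tfrac{d}{dy}\int_0^y W\circ p_{s_d}\circ a_n(\sin t)\tan t\,dt
\end{align*}
for any admissible iterated-integral ``word'' $W$ built from the one-forms $\cot t\,dt$, $\csc t\,dt$, $\sec t\,dt$, $\tan t\,dt$. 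The essential content is the depth-one recursion applied \emph{under the integral sign}: one needs that the depth-one Recursive Formula is really an identity of functions of the upper limit that can be freely composed on the left with an arbitrary prefix word $W$ — which is exactly what the extended iterated integrals in the display before the theorem are designed to encode. I would verify this by differentiating both sides in $y$, using the peeling lemma to reduce to an identity one depth shorter, and iterating; the boundary/initial conditions match at $y=0$ since all integrals vanish there.

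The main obstacle, and the step deserving the most care, is the interchange of the infinite sum $\sum_{m>n}$ with the iterated integral and with the derivative $\frac{d}{dy}$, together with the endpoint behaviour at $y=\pm\pi/2$ where $a_m(\sin y)=a_m$ and the series $\sum a_m/(2m)^{s_1}$ diverges when $s_1=1$ — so the ``$\cot y\,\frac{d}{dy}$'' must be interpreted as the stated limit, and one must confirm that limit exists and equals the sum $\ga(\bfs;1)_n$ when the latter converges (i.e. when $s_1\ge 2$, or $s_1=1$ with $d\ge 2$ making the tail convergent). For $|y|<\pi/2$ we have $|\sin y|<1$, so $a_m(\sin y) = \binom{2m}{m}(\sin y)^{2m}/4^m = O(m^{-1/2}(\sin y)^{2m})$ decays geometrically; this gives locally uniform convergence on compact subsets of $(-\pi/2,\pi/2)$ and legitimizes term-by-term differentiation and integration there via dominated convergence, using $\binom{2m}{m}/4^m\sim 1/\sqrt{\pi m}$. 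The passage to the closed interval then follows from Abel-type continuity of the iterated integrals together with the monotone/dominated convergence of the defining series; I would spell this out carefully, as it is the one place where the formal manipulations need genuine analytic justification rather than algebra.
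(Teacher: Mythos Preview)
Your inductive architecture and the ``peeling lemma'' observation (that $\cot y\,\tfrac{d}{dy}\int_0^y p_s\circ(\cdots) = \int_0^y(\cot t\,dt)^{s-1}(1-\csc t\,dt\circ\sec t)\circ(\cdots)$) are exactly right, and your depth-$d$ to depth-$(d-1)$ reduction via $\ga(\bfs;\sin y)_n=\sum_{m>n}(2m)^{-s_d}\ga((s_1,\dots,s_{d-1});\sin y)_m$ is precisely the iteration the paper has in mind when it says ``the general depth cases follow readily by iteration.''

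The gap is in the base case. The first Recursive Formula you invoke, namely
\[
\sum_{m>n}\frac{a_m(\sin y)}{(2m)^s}=\int_0^y(\cot t\,dt)^{s-1}(1-\csc t\,dt\circ\sec t)\,a_n(\sin t)\tan t\,dt,
\]
is not an input you may assume: in the paper's logical structure it \emph{is} equation \eqref{equ-ga-depthd}, which is derived inside the proof of this very theorem. (The Recursive Formulas box in the introduction is a forward-referenced preview, not a prior result.) Since your peeling lemma shows this identity is literally equivalent to the $d=1$ case of the theorem, assuming it is circular. The actual work the paper does here is to set
\[
u_n(y)=\tan y\sum_{m>n}\frac{a_m(\sin y)}{(2m-1)\sin y},\qquad v_n(y)=\int_0^y a_n(\sin t)\,\frac{\tan^2 t}{\sin t}\,dt,
\]
compute $\tfrac{d}{dy}(u_{n-1}-u_n)=\tfrac{d}{dy}(v_{n-1}-v_n)$ by a direct calculation, and then telescope using $u_n,v_n\to 0$ (via Stirling) to get $u_n=v_n$. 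Differentiating this yields \eqref{strictGr}, and from there one multiplies by $\sin y$, integrates, and iterates $\cot y\,dy$ to reach \eqref{equ-ga-depthd}. That telescoping step is the genuine idea you are missing.

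A minor correction on your endpoint discussion: since $a_m\sim(\pi m)^{-1/2}$, the tail $\sum_{m>n}a_m/(2m)^{s_1}$ already converges for $s_1=1$ (terms are $\asymp m^{-3/2}$), so the $y=\pm\pi/2$ limit is not as delicate as you suggest.
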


We will again call the sum $|\bfs|:=s_1+\dots+s_d$ the weight and $d$ the depth of the series $\ga(\bfs;\sin y)$, respectively.

\begin{proof} The proof is in the same spirit as that of \cite[Thm.~4]{Akhilesh}. Define
\begin{align*}
u_n(y)=\tan y\sum_{m>n} \frac1{4^m} \bimm \frac{\sin^{2m-1}y}{2m-1},
\quad
v_n(y)=\int_0^y \binn\frac{\sin^{2n-1}t}{4^n} \tan^2 t\, dt.
\end{align*}
We claim that $u_n(y)=v_n(y)$. Indeed, first we have
\begin{align*}
 \frac{d}{dy}\Big(u_{n-1}(y)-u_n(y)\Big)=&\, \frac1{4^n}\binn \frac{d}{dy} \frac{ \sin^{2n-1} y \tan y}{(2n-1) } \\
=&\, \binn \frac{\sin^{2n-1} y}{4^n} \cdot \frac{ (2n-1)+\sec^2 y}{2n-1}
\end{align*}
and
\begin{align*}
 \frac{d}{dy}\Big(v_{n-1}(y)-v_n(y)\Big)
=&\, \frac{\sin^{2n-3} y}{4^n} \tan^2 t \left(4 \binom{2n-2}{n-1} - \binn \sin^2 y\right)\\
=&\, \frac{\sin^{2n-1} y}{4^n} \binn \sec^2 y \left(\frac{2n}{2n-1} - \sin^2 y\right)\\
=&\, \frac{\sin^{2n-1} y}{4^n} \binn \sec^2 y \left(\frac{1}{2n-1} +\cos^2 y\right).
\end{align*}
Hence for all $n\ge 1$
\begin{align*}
u_{n-1}(y)-u_n(y) = v_{n-1}(y)-v_n(y)
\end{align*}
since clearly the two sides both vanish when $y=0$.
Further, for all $y\in(-\pi/2,\pi/2)$,
\begin{align*}
\lim_{n\to \infty} u_n(y) = \lim_{n\to \infty} v_n(y)=0
\end{align*}
since by Stirling's formula
\begin{align*}
 \frac1{4^n}\binn \sim \frac1{\sqrt{\pi n}} \quad \text{as } n\to \infty.
\end{align*}
By telescoping we see immediately that $u_n(y)=v_n(y)$, or equivalently,
\begin{align}\label{equ-gam-d=1}
\sum_{m>n} \frac{1}{4^m} \bimm \frac{\sin^{2m-1} y}{2m-1} =\cot y\int_0^y \binn \frac{\sin^{2n-1}t}{4^n} \tan^2 t \, dt.
\end{align}
Differentiating \eqref{equ-gam-d=1} we get
\begin{align}\label{strictGr}
\sum_{m>n} a_{m}\sin^{2m-2} y \cos y= \binn \frac{\sin^{2n} y}{4^n} \sec y
-\csc^2 y\int_0^y \binn \frac{\sin^{2n}t}{4^n} \tan t \sec t\, dt.
\end{align}
Multiplying this by $\sin y$ and integrating, we have
\begin{align*}
\sum_{m>n} \frac{1}{4^m} \bimm \frac{\sin^{2m} y}{2m} =&\, \int_0^y (1-\csc t \,dt\circ\sec t)\binn \frac{\sin^{2n}t}{4^n} \tan t\,dt.
\notag
\end{align*}
Repeatedly multiplying these by $\cot y$ and integrating, we have for all $s\ge 1$
\begin{align}\label{equ-ga-depthd}
\sum_{m>n} \frac{1}{4^m} \bimm \frac{\sin^{2m} y}{(2m)^s} =&\, \int_0^y\bigg(\frac{dt}{\tan t}\bigg)^{s-1} (1-\csc t \,dt\circ\sec t)\binn \frac{\sin^{2n}t}{4^n} \tan t\,dt.
\end{align}
The general depth cases of the theorem follow readily from the above depth one cases by iteration. We leave the details
to the interested reader.
\end{proof}

\begin{ex} If $s_1=\cdots=s_d=1$ then we see that
\begin{align*}
\ga(1;\sin y):=&\, \sum_{n>0} \frac1{4^{n}} \binn \frac{\sin^{2n} y}{2n}
= \int_0^y ( \csc t-\cot t) \,dt \\
=&\, \left. \log \frac{\csc t}{\csc t+\cot t} \right|_0^y
= \log 2- \log(1+\cos y).
\end{align*}
Thus $\ga(1;1)=\log 2$. When $d=2$ we see that
\begin{align*}
\ga(1,1;\sin y):=&\, \int_0^y \Big( \csc t \,dt \circ\sec t-\cot t \,dt \Big) ( \csc t-\cot t) \,dt \\
=&\,\int_0^y
 ( \csc t-\cot t) \,dt ( \csc t-\cot t) \,dt+ \csc t \,dt \circ (\sec t-1)( \csc t-\cot t) \,dt.
\end{align*}
Note that
\begin{align*}
&\, \int_0^u (\sec t-1)( \csc t-\cot t) \, dt=\int_0^u\frac{(1-\cos t)^2 \,dt}{\sin t\cos t}\\
=&\,\int_0^u \frac{(\cos t-1) \,d\cos t}{(1+\cos t)\cos t}
= \int_0^u \left(\frac{2}{1+\cos t}-\frac{1}{\cos t} \right) d\cos t
= \log\frac{(1+\cos u)^2}{4\cos u}.
\end{align*}
Setting $a=\cos y$ we have
\begin{align*}
\ga(1,1;\sin y):=&\, \frac12\Big(\int_0^y (\csc t-\cot t) \,dt\Big)^2 + \int_0^y \log\frac{(1+\cos t)^2}{4\cos t} \csc t \,dt \\
=&\, \frac12 \log^2 \frac{2}{1+a}+  \int_0^y - \frac{1}{1-\cos^2 t}\log\frac{(1+\cos t)^2}{4\cos t} d\cos t \\
=&\, \frac12 \log^2 \frac{2}{1+a}+  \int_a^1 \frac{1}{1-u^2}\log\frac{(1+u)^2}{4u} du \\
=&\, \frac12 \log^2 \frac{2}{1+a}+ \frac12\Li_2(1-a)- \Li_2\Big(\frac{1-a}2\Big)+\frac12\Li_2(-a) \\
&\, + \log2\log(1+a) + \frac12\log(a)\log(1 + a) - \frac12\log^2 2 - \frac12\log^2(1 + a) + \frac14\zeta(2) \\
=&\, \frac12\Li_2(1-a)- \Li_2\Big(\frac{1-a}2\Big)+\frac12\Li_2(-a)+\frac12\log(a)\log(1 + a) + \frac14\zeta(2).
\end{align*}
Applying the identity $\Li_2(t)+\Li_2(1-t)+\log t\log(1-t)=\zeta(2)$ with $t=1/2$ we see that
\begin{align*}
\ga(1,1;1):=&\, \frac12\log^2 2 + \frac14\zeta(2)\approx 0.65146002.
\end{align*}
\end{ex}

\section{An odd variant}
In this section, we turn to the Ap\'ery-type series similar to \eqref{defn-gaAll} but with
summation indices restricted to odd numbers only. Set
\begin{align}
\theta_s(t)=&\, \tan t\,dt \bigg(\frac{dt}{\tan t}\bigg)^{s-1} ( 1-dt\circ\csc t \, \sec t), \label{defn-theta}\\
q_s(t)=&\, \sec t\,dt \bigg(\frac{dt}{\tan t}\bigg)^{s-1} (\csc t-dt\circ\sec t) .\label{defn-q}
\end{align}

\begin{thm} \label{thm-ga-gb}
For all $n\in\N_0$, $\bfs=(s_1,\dots,s_d)\in\N^d$ and $y\in[-\pi/2,\pi/2]$ we have
\begin{align*}
\gb(\bfs;\sin y)_n:=&\, \sum_{n_1> \cdots > n_d>n} \frac{a_{n_1}(\sin y)}{(2n_1+1)^{s_1}\cdots (2n_d+1)^{s_d}} \\
=&\, \csc y\cot y \frac{d}{dy} \int_0^y \theta_{s_1} \circ\cdots \circ \theta_{s_d} \circ a_{n}(\sin t) \sin t \tan t\,dt,\\
\gb^\star(\bfs;\sin y)_n:=&\, \sum_{n_1\ge \cdots \ge n_d\ge n}\frac{a_{n_1}(\sin y)}{(2n_1+1)^{s_1}\cdots (2n_d+1)^{s_d}} \\
=&\, \cot y \frac{d}{dy} \int_0^y q_{s_1} \circ\cdots \circ q_{s_d} \circ a_{n}(\sin t) \tan t\,dt.
\end{align*}
Here when $y=\pm \pi/2$ we understand the right-hand side of the above as the limits $y\to \pm \pi/2$.
\end{thm}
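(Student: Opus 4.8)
The plan is to follow the proof of Theorem~\ref{thm-ga} essentially verbatim: establish the depth-one cases first, then iterate in the depth. The engine is the depth-one identity \eqref{strictGr} already proved there, namely $\sum_{m>n}a_m\sin^{2m-2}y\cos y=\binn\frac{\sin^{2n}y}{4^n}\sec y-\csc^2 y\int_0^y\binn\frac{\sin^{2n}t}{4^n}\tan t\sec t\,dt$; passing from the $2m$-denominators of Theorem~\ref{thm-ga} to the $2m+1$-denominators here amounts only to multiplying by a different power of $\sin y$ before integrating. For $\gb$ I would multiply \eqref{strictGr} by $\sin^2 y$ (rather than by $\sin y$), integrate from $0$ to $y$, and divide by $\sin y$; a short rearrangement of $1$-forms gives the $d=1$, $s_1=1$ case $\gb((1);\sin y)_n=\csc y\int_0^y(1-dt\circ\csc t\sec t)a_n(\sin t)\sin t\tan t\,dt$. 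To raise the weight, put $g_s(y)=\sin y\cdot\gb((s);\sin y)_n=\sum_{m>n}a_m\sin^{2m+1}y/(2m+1)^s$; termwise differentiation gives $g_{s+1}'(y)=\cos y\cdot\gb((s);\sin y)_n$, whence $\gb((s+1);\sin y)_n=\csc y\int_0^y\cos t\cdot\gb((s);\sin t)_n\,dt$. As $\cos t\csc t=\cot t$, iterating prepends $(\cot t\,dt)^{s-1}$ and reproduces the letter $\theta_s$ of \eqref{defn-theta}; finally $\csc y\cot y\frac{d}{dy}\int_0^y\theta_s\circ(\cdots)=\csc y\int_0^y(\cot t\,dt)^{s-1}(1-dt\circ\csc t\sec t)\circ(\cdots)$ since $\csc y\cot y\tan y=\csc y$, which is the stated depth-one formula for $\gb$ (the second Recursive Formula of the introduction).

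For $\gb^\star$ I would write $\gb^\star((s);\sin y)_n=\frac{a_n(\sin y)}{(2n+1)^s}+\gb((s);\sin y)_n$. Using $\frac{a_n(\sin y)}{2n+1}=\csc y\int_0^y a_n\sin^{2n}t\cos t\,dt$ and the elementary $1$-form identity $a_n\sin^{2n}t\cos t\,dt+(1-dt\circ\csc t\sec t)a_n(\sin t)\sin t\tan t\,dt=(\csc t-dt\circ\sec t)a_n(\sin t)\tan t\,dt$ — in which the two ``$dt\circ$'' parts coincide and the plain parts differ by $(\csc t-\sin t)a_n(\sin t)\tan t=a_n\sin^{2n}t\sec t(1-\sin^2 t)=a_n\sin^{2n}t\cos t$ — adding yields $\gb^\star((1);\sin y)_n=\csc y\int_0^y(\csc t-dt\circ\sec t)a_n(\sin t)\tan t\,dt$. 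The same $g_s$-type recursion then raises the weight, building the letter $q_s$ of \eqref{defn-q}, and $\cot y\sec y=\csc y$ puts the answer in the stated form (the third Recursive Formula of the introduction).

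The passage to arbitrary depth is then uniform in the weights. Summing over the innermost index last, $\gb(\bfs;\sin y)_n=\sum_{m>n}(2m+1)^{-s_d}\,\gb((s_1,\dots,s_{d-1});\sin y)_m$, and likewise for $\gb^\star$ with ``$\ge$''. Substituting the depth-$(d-1)$ formula (induction hypothesis), pulling $\csc y\cot y\frac{d}{dy}$ (resp.\ $\cot y\frac{d}{dy}$) outside the sum, and using linearity of $R\mapsto\int_0^y W\circ R$ in the innermost $1$-form $R$ (here $W=\theta_{s_1}\circ\cdots\circ\theta_{s_{d-1}}$, resp.\ $W=q_{s_1}\circ\cdots\circ q_{s_{d-1}}$) to push $\sum_m$ inside, the inner sum becomes $\big[\sin t\tan t\cdot\gb((s_d);\sin t)_n\big]\,dt$ (resp.\ $\big[\tan t\cdot\gb^\star((s_d);\sin t)_n\big]\,dt$); by the depth-one case and $\sin t\tan t\csc t\cot t=1$ (resp.\ $\tan t\cot t=1$) this is exactly $\big[\frac{d}{dt}\int_0^t\theta_{s_d}\circ a_n(\sin u)\sin u\tan u\,du\big]dt$ (resp.\ its $q_{s_d}$ analogue). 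One then invokes the elementary identity $\int_0^y W\circ\big(\big[\frac{d}{dt}\int_0^t R\big]dt\big)=\int_0^y W\circ R$, valid for any prefix word $W$ and any suffix $R$ (immediate from the recursive definition of iterated integrals, extended by linearity), to absorb the new letter $\theta_{s_d}$ (resp.\ $q_{s_d}$), completing the induction.

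The only real work, exactly as in the proof of Theorem~\ref{thm-ga}, is bookkeeping rather than anything conceptual: one must (i) carry the several trigonometric $1$-forms through the $\circ$-products and check that the manipulations deposit precisely the letters $\theta_s$ and $q_s$, and (ii) justify the interchanges of $\sum_m$ with $\frac{d}{dy}$ and with $\int_0^y$. Part (ii) is harmless on compact subsets of $(-\pi/2,\pi/2)$: Stirling's estimate $\frac1{4^n}\binn\sim1/\sqrt{\pi n}$ together with $|\sin t|\le1$ gives uniform convergence there, and the endpoints $y=\pm\pi/2$ are, as in the statement, handled by passing to the limit — the limiting series converging because $s_1\ge1$ makes the decay exponent $s_1+1/2>1$. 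I expect step (i), not any analytic difficulty, to be where the care is needed.
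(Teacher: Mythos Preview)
Your proposal is correct and follows essentially the same route as the paper: both derive the depth-one $\gb$ formula by multiplying \eqref{strictGr} by $\sin^2 y$ and integrating, then raise the weight by repeated multiplication by $\cot y$ and integration, and finally iterate in the depth. The only cosmetic difference is in the $\gb^\star$ depth-one step: the paper multiplies \eqref{strictGr} by $\sin^2 y$ and then adds the single term $a_n(\sin y)\cos y$ to both sides before integrating, whereas you instead add the already-integrated representations of $\gb((1);\sin y)_n$ and $a_n(\sin y)/(2n+1)$ and combine the $1$-forms; these are the same computation in a different order.
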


\begin{proof} Recall that from \eqref{strictGr} we get
\begin{align}\label{strictGr2}
\sum_{m>n} a_{m}\sin^{2m-2} y \cos y= \binn \frac{\sin^{2n} y}{4^n} \sec y
-\csc^2 y\int_0^y \binn \frac{\sin^{2n}t}{4^n} \tan t \sec t\, dt.
\end{align}
Multiplying this by $\sin^2 y$ and integrating, we have
\begin{align*}
\sum_{m>n} \frac{1}{4^m} \bimm \frac{\sin^{2m+1} y}{2m+1} =&\, \int_0^y (1-dt\circ\csc t\sec t) \binn \frac{\sin^{2n+1}t}{4^n} \tan t\,dt.
\end{align*}
Repeatedly multiplying this by $\cot y$ and integrating, we have for all $s\ge 1$
\begin{align}\label{equ-gb-depthd}
\sum_{m>n} \frac{1}{4^m} \bimm \frac{\sin^{2m+1} y}{(2m+1)^s} =&\, \int_0^y \bigg(\frac{dt}{\tan t}\bigg)^{s-1} (1-dt\circ\csc t\sec t) \binn \frac{\sin^{2n+1}t}{4^n} \tan t\,dt.
\end{align}

On the other hand, multiplying \eqref{strictGr2} by $\sin^2 y$
and adding $\frac{1}{4^n} \binn \sin^{2n} y \cos y$ on both sides we get
\begin{align*}
\sum_{m\ge n} \frac{1}{4^m} \bimm \sin^{2m} y \cos y= \binn \frac{\sin^{2n}y}{4^n} \sec y
-\int_0^y \binn \frac{\sin^{2n}t}{4^n} \tan t \sec t\, dt.
\end{align*}
Integrating we have
\begin{align*}
\sum_{m\ge n} \frac{1}{4^m} \bimm \frac{\sin^{2m+1} y}{2m+1} =&\, \int_0^y (\csc t-dt\circ\sec t) \binn \frac{\sin^{2n+1}t}{4^n} \sec t\,dt.
\end{align*}
Repeatedly multiplying these by $\cot y$ and integrating, we have for all $s\ge 2$
\begin{align}
\sum_{m\ge n} \frac{1}{4^m} \bimm \frac{\sin^{2m+1} y}{(2m+1)^s} =&\, \int_0^y \bigg(\frac{dt}{\tan t}\bigg)^{s-1}(\csc t-dt\circ\sec t) \binn \frac{\sin^{2n+1}t}{4^n} \sec t\,dt. \label{equ-gbE-depthd}
\end{align}
The general cases of the theorem follow readily from the above depth one cases by iteration.
\end{proof}

\section{Another odd variant}
In this section, we study another variant of the Ap\'ery-type series similar to \eqref{defn-gaAll}. Even though these series look very similar to those in the previous section, they are fundamentally different and therefore require different approaches to handle them. Set
\begin{align}\label{defn-k}
\gf_s(t)=
\left\{
 \begin{array}{ll}
 \tan t\, dt, \quad & \hbox{if $s=1$;} \\
 \tan^2 t\, dt \circ (\cot t\, dt)^{s-2}\circ\cot^2 t\, dt, \quad & \hbox{if $s\ge 2$.}
 \end{array}
\right.
\end{align}

\begin{thm} \label{thm-gam-Trig}
For all $n\in\N_0$, $\bfs=(s_1,\dots,s_d)\in\N^d$ and $y\in[-\pi/2,\pi/2]$ we have
\begin{align*}
\gam(\bfs;\sin y)_n:=&\, \sum_{n_1>n_2> \cdots > n_d>n}
\frac{a_{n_1}(\sin y)}{(2n_1-1)^{s_1}\cdots (2n_d-1)^{s_d}} \\
=&\, \cos y \cot y \frac{d}{dy} \int_0^y \gf_{s_1}\circ\cdots \circ\gf_{s_d}\circ a_{n}(\sin t) \tan t \sec t \,dt.
\end{align*}
Here when $y=\pm \pi/2$ we understand the right-hand side of the above as the limits $y\to \pm \pi/2$.
\end{thm}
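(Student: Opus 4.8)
The plan is to follow the template of the proofs of Theorems~\ref{thm-ga} and~\ref{thm-ga-gb}: first settle the depth‑one cases by repeatedly integrating against $\cot t\,dt$, then lift to arbitrary depth by induction on $d$. The depth‑one input is already available in~\eqref{equ-gam-d=1}: setting
\[
G_s(y):=\sum_{m>n}\frac1{4^m}\bimm\frac{\sin^{2m-1}y}{(2m-1)^s},
\]
that identity, together with $\binn\sin^{2n-1}t/4^n=a_n(\sin t)/\sin t$ and $\tan^2 t/\sin t=\tan t\sec t$, reads $G_1(y)=\cot y\int_0^y a_n(\sin t)\tan t\sec t\,dt$. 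Since $4^{-m}\bimm\sim 1/\sqrt{\pi m}$ by Stirling's formula, term‑by‑term differentiation is valid on compact subsets of $(-\pi/2,\pi/2)$ and yields $G_s'(y)=\cot y\,G_{s-1}(y)$ and $G_s(0)=0$, hence $G_s(y)=\int_0^y\cot t\,G_{s-1}(t)\,dt$ for $s\ge2$. Iterating down to $G_1$, whose own leading factor $\cot y$ fuses with the last of the $s-1$ copies of $\cot t\,dt$ into one $\cot^2 t\,dt$, gives $G_s(y)=\int_0^y(\cot t\,dt)^{s-2}\circ\cot^2 t\,dt\circ a_n(\sin t)\tan t\sec t\,dt$ for $s\ge2$.

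The only feature not already present in Theorems~\ref{thm-ga} and~\ref{thm-ga-gb} is that $a_m(\sin y)/\sin y$, not $a_m(\sin y)$, is the quantity that iterates cleanly, while $\gam(s;\sin y)_n=\sum_{m>n}a_m(\sin y)/(2m-1)^s=\sin y\cdot G_s(y)$ carries one extra power of $\sin y$. Using $\cos y\cot y=\cos^2 y/\sin y$ I would write
\[
\gam(s;\sin y)_n=\sin y\,G_s(y)=\cos y\cot y\cdot\tan^2 y\,G_s(y)=\cos y\cot y\,\frac{d}{dy}\int_0^y\tan^2 t\,G_s(t)\,dt;
\]
prepending $\tan^2 t\,dt$ to the iterated‑integral formula for $G_s$ produces precisely $\gf_s$ when $s\ge2$, while for $s=1$ the prepended $\tan^2 t$ absorbs the leading $\cot t$ of $G_1$ (using $\tan^2 t\cdot\cot t=\tan t$) and leaves $\tan t\,dt=\gf_1$. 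This gives the depth‑one statement of the theorem in both cases.

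For the induction I would peel off the outermost summation index, the one carrying the binomial coefficient:
\[
\gam(s_1,\dots,s_d;\sin y)_n=\sum_{n_2>\cdots>n_d>n}\frac{\gam(s_1;\sin y)_{n_2}}{(2n_2-1)^{s_2}\cdots(2n_d-1)^{s_d}}.
\]
Plugging in the depth‑one formula for $\gam(s_1;\sin y)_{n_2}$, pulling $\cos y\cot y\,\frac{d}{dy}$ outside the sum, and pushing the sum past the iterated integral (both interchanges legitimate by local uniform convergence, again via $4^{-m}\bimm\sim1/\sqrt{\pi m}$) replaces the innermost $1$‑form $a_{n_2}(\sin t)\tan t\sec t\,dt$ by $\gam(s_2,\dots,s_d;\sin t)_n\,\tan t\sec t\,dt$. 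Now invoke the inductive hypothesis on depth: $\gam(s_2,\dots,s_d;\sin t)_n=\cos t\cot t\,\frac{d}{dt}\Phi(t)$, where $\Phi(t)=\int_0^t\gf_{s_2}\circ\cdots\circ\gf_{s_d}\circ a_n(\sin u)\tan u\sec u\,du$ and $\Phi(0)=0$. Since $\cos t\cot t\cdot\tan t\sec t=1$, the innermost $1$‑form collapses to $d\Phi(t)$, and because $\Phi$ is itself that iterated integral, the word $\gf_{s_2}\circ\cdots\circ\gf_{s_d}\circ a_n(\sin t)\tan t\sec t\,dt$ reappears underneath $\gf_{s_1}$. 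This yields $\gam(\bfs;\sin y)_n=\cos y\cot y\,\frac{d}{dy}\int_0^y\gf_{s_1}\circ\cdots\circ\gf_{s_d}\circ a_n(\sin t)\tan t\sec t\,dt$, completing the induction; the endpoint cases $y=\pm\pi/2$ then follow from continuity of both sides, exactly as for Theorems~\ref{thm-ga} and~\ref{thm-ga-gb}.

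I expect the actual difficulty to be bookkeeping rather than any new idea: one must track how the superfluous power of $\sin$ (from $a_m\propto\sin^{2m}$ against the naturally occurring $\sin^{2m-1}$) propagates through the recursion, and verify at each step that the operator $\cos t\cot t\,\frac{d}{dt}$ standing in front of the depth‑$(d-1)$ tail cancels exactly against the $\tan t\sec t$ weight of the inserted $1$‑form, so that the iterated‑integral word is preserved. The two small identities doing this work — $\cos t\cot t\cdot\tan t\sec t=1$ in the recursion and $\tan^2 t\cdot\cot t=\tan t$ in the $s=1$ base case — are the crux; everything else is the same telescoping and integration already carried out for Theorems~\ref{thm-ga} and~\ref{thm-ga-gb}.
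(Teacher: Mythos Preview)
Your proposal is correct and follows essentially the same route as the paper: derive the depth-one identity~\eqref{equ-gam-depthd} by repeatedly multiplying~\eqref{equ-gam-d=1} by $\cot y$ and integrating, then iterate to arbitrary depth. The paper leaves the details of the iteration to the reader, while you spell out explicitly how the wrapper $\cos y\cot y\,\frac{d}{dy}$ arises from the extra factor $\sin y$ in $\gam(s;\sin y)_n=\sin y\,G_s(y)$ and how the cancellation $\cos t\cot t\cdot\tan t\sec t=1$ makes the inductive step go through cleanly; this is exactly the bookkeeping the paper omits.
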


\begin{proof}
Recall from \eqref{equ-gam-d=1} we have
\begin{align*}
\sum_{m>n} \frac{1}{4^m} \bimm \frac{\sin^{2m-1} y}{2m-1} =\cot y\int_0^y \binn \frac{\sin^{2n-1}t}{4^n} \tan^2 t \, dt.
\end{align*}
For any $s\ge 2$ we can repeatedly multiply this equation by $\cot y$ and integrate to get
\begin{align}\label{equ-gam-depthd}
\sum_{m>n} \frac{1}{4^m} \bimm \frac{\sin^{2m-1} y}{(2m-1)^s}
=\int_0^y \bigg(\frac{dt}{\tan t}\bigg)^{s-2} \frac{dt}{\tan^2 t}\binn \frac{\sin^{2n-1}t}{4^n} \tan^2 t \, dt.
\end{align}
This proves the case of $d=1$ in the theorem. The general depth case now follows easily by repeating this procedure.
We leave the details to the interested reader.
\end{proof}

\begin{ex}\label{exa-gam1} If $s_1=\cdots=s_d=1$ then we see that
\begin{align*}
\gam(\bfs;\sin y):=&\, \sum_{n_1>\cdots > n_d>0}
\frac{a_{n_1} (\sin y)}{(2n_1-1)\cdots (2n_d-1)} \\
=&\, \cos y \int_0^y (\tan t \,dt)^{d-1} \circ\tan t \sec t \,dt
=1-\cos y \sum_{j=1}^{d-1} \frac{\log^j |\sec y|}{j!}.
\end{align*}
In particular, we see that $\gam(1_d;1)=1$ for all $d\ge 1$.
\end{ex}

It turns out that the evaluation $\gam(1_d;1)=1$ is just a very special case of the following result on
the tails $\gam(1_d;1)_n$, which in turn easily leads to Thm.~\ref{thm-LeadGammaBlock}.

\begin{cor} \label{cor-Chi-Trig}
For all $n\in\N_0$ and $d\in\N$ we have
\begin{align*}
\gam(1_d;1)_n=&\, \sum_{n_1>n_2> \cdots > n_d>n} \frac{a_{n_1}}{(2n_1-1) \cdots (2n_d-1) } =a_{n}.
\end{align*}
\end{cor}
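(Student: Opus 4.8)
The plan is to prove the identity by induction on the depth $d$, powered by the single depth-one evaluation
\[
\sum_{m>n}\frac{a_m}{2m-1}=a_n\qquad(n\in\N_0),
\]
which is exactly the corollary in the case $d=1$. I would first establish this base identity by a telescoping argument. Put $T_n:=\sum_{m>n}a_m/(2m-1)$; this series converges because $a_m\sim 1/\sqrt{\pi m}$ by Stirling's formula, as already used in the proof of Theorem~\ref{thm-ga}. The relation $(2n)a_n=(2n-1)a_{n-1}$ recorded just before \eqref{equ-NeedOddVar} gives $a_{n-1}/a_n=2n/(2n-1)$, hence
\[
a_{n-1}-a_n=\frac{a_n}{2n-1}=T_{n-1}-T_n\qquad(n\ge1).
\]
Thus $T_n-a_n$ is independent of $n$; letting $n\to\infty$ and using $T_n\to0$ together with $a_n\to0$ forces this constant to be $0$, i.e. $T_n=a_n$. (Alternatively one can read this off by letting $y\to\pi/2$ in \eqref{equ-gam-d=1} and evaluating the right-hand integral via the substitution $u=\cos t$, where the $u^{-2}$ term produces $\csc y\to1$ after multiplication by $\cot y$ and everything else is $O(\cos y)$; the telescoping route is cleaner because it sidesteps the endpoint singularity at $t=\pi/2$.)

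For the inductive step, fix $d\ge2$ and assume $\gam(1_{d-1};1)_m=a_m$ for every $m\in\N_0$. All the summands of $\gam(1_d;1)_n$ are non-negative, so the multiple series converges and may be summed in any order; peeling off the innermost (smallest) index $n_d=m$ gives
\begin{align*}
\gam(1_d;1)_n&=\sum_{m>n}\frac{1}{2m-1}\sum_{n_1>\cdots>n_{d-1}>m}\frac{a_{n_1}}{(2n_1-1)\cdots(2n_{d-1}-1)}\\
&=\sum_{m>n}\frac{\gam(1_{d-1};1)_m}{2m-1}=\sum_{m>n}\frac{a_m}{2m-1}=a_n,
\end{align*}
the middle equality being the inductive hypothesis and the last one the base identity. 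This closes the induction and proves the corollary.

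I do not anticipate a serious obstacle. The only genuinely delicate point is the base case, where one must check both the algebraic identity $a_{n-1}-a_n=a_n/(2n-1)$ and that the tail $T_n$ really tends to $0$, so that the telescoped constant is indeed $0$; convergence of the depth-$d$ series and the legitimacy of reordering it are then automatic from positivity. If one prefers to remain inside the iterated-integral formalism, the same conclusion also follows by specializing Theorem~\ref{thm-gam-Trig} to $\bfs=1_d$, simplifying the prefactor via $\cos y\,\cot y\,\tan y=\cos y$, and letting $y\to\pi/2$ — but this reinstates exactly the endpoint analysis that the telescoping proof avoids, so I would present the telescoping-plus-induction argument as the main one.
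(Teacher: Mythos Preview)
Your proof is correct and complete, but it takes a genuinely different route from the paper's own argument. The paper proves the corollary by invoking the iterated-integral representation of Theorem~\ref{thm-gam-Trig}: writing $f_d(y)=\int_0^y(\tan t\,dt)^{d-1}\circ a_n(\sin t)\tan t\sec t\,dt$, one has $\gam(1_d;1)_n=\lim_{y\to(\pi/2)^-}f_d(y)/\sec y$, and a repeated application of L'H\^opital's rule (using $f_d'(y)=\tan y\cdot f_{d-1}(y)$) peels the depth down to $d=1$, where the limit is immediately $a_n(\sin y)\to a_n$. Your argument instead bypasses the integral machinery entirely: the base case $\sum_{m>n}a_m/(2m-1)=a_n$ follows from the telescoping identity $a_{n-1}-a_n=a_n/(2n-1)$ (a direct consequence of $(2n)a_n=(2n-1)a_{n-1}$), and the depth induction is a clean reindexing. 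What your approach buys is elementarity and the avoidance of any endpoint/limit analysis at $y=\pi/2$; what the paper's approach buys is that it stays within the iterated-integral formalism already set up, so no separate combinatorial identity needs to be isolated. You already note this trade-off in your final paragraph, which is accurate.
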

\begin{proof} For any $y\in(-\pi/2,\pi/2)$ and $d\in\N$, we set
\begin{align*}
f_d(y)=\int_0^y (\tan t \,dt)^{d-1} \circ a_{n}(\sin t)\tan t \sec t \,dt.
\end{align*}
Then by Thm.~\ref{thm-gam-Trig} we get
\begin{align*}
\gam(\bfs;1)_n=&\, \lim_{y\to (\pi/2)^-} \frac{f_d(y)}{\sec y}= \lim_{y\to (\pi/2)^-} \frac{f'_d(y)}{\sec y\tan y}
\end{align*}
by L'H\^opital's rule. But $f_d(y)=(\tan y ) f_{d-1}(y)$ if $d\ge 2$. Thus repeatedly applying L'H\^opital's rule we finally get
\begin{align*}
\gam(1_d;1)_n=&\, \lim_{y\to (\pi/2)^-} \frac{f_1(y)}{\sec y}= \lim_{y\to (\pi/2)^-} \frac{f'_1(y)}{\sec y\tan y}
=\lim_{y\to (\pi/2)^-} a_{n}(\sin y)=a_n,
\end{align*}
as desired.
\end{proof}

\section{Mixed parities}
We now turn to the general Ap\'ery-type series whose summation indices can have mixed parities.
For any $r\in\N$, 1-forms $f_1(t)\,dt,\dots,f_r(t)\,dt$ and function $F(t)$, we define
\begin{align*}
f_1(t)\,dt \circ \ola{F(t)}  \circ f_2(t)\,dt = &\, F(t)f_1(t)\,dt \circ f_2(t)\,dt,\\
\int_0^y \ola{F(t)}  \circ f_1(t)\,dt  \circ \cdots \circ f_r(t)\,dt=&\, F(y)\int_0^y   f_1(t)\,dt  \circ \cdots \circ f_r(t)\,dt.
\end{align*}

\begin{thm}\label{thm-Mixed-Parity}
Suppose $d\in \N$ and $\bfs=(s_1,\dots,s_d)\in\N^d$. Let $y\in(-\pi/2,\pi/2)$.
Set $F_s= (\cot t\, dt)^{s-1}$ and
\begin{align*}
\gl_{2n,s}(t)=&\, F_s (\tan t\,dt-\csc t \,dt\, d(\sec t) ) , \\
\gl_{2n+1,s}(t)=&\,  \ola{\csc t}\circ  F_s (\sin t\tan t\,dt-dt\, d(\sec t) ),\\
\gl_{2n-1,1}(t)=&\,   \ola{\cos t}\circ  \tan t \sec t \, dt,\\
\gl_{2n-1,s}(t)=&\,   \ola{\sin t}\circ F_{s-1} \frac{dt}{\tan^2 t}   \tan t \sec t \, dt \quad (s\ge2) .
\end{align*}
Then for any $l_1(n)=2n,2n\pm 1$ and if $l_2(n),\dots,l_d(n)=2n,2n+1$ we have the tails
\begin{align}\label{equ-Mixed-Parity-Trig}
& \sum_{n_1 > \cdots> n_d> n}
\frac{a_{n_1}(\sin y)}{ l_1(n_1)^{s_1}\cdots l_d(n_d)^{s_d}}
 = \int_0^y \gl_{l_1,s_1} \circ \cdots \gl_{l_d,s_{d}} \circ \ola{a_n(\sin t)} .
\end{align}
Using non-trigonometric 1-forms, we get for all $x\in(-1,1)$
\begin{align}\label{equ-Mixed-Parity-NonTrig}
& \sum_{n_1 > \cdots> n_d> n} \frac{a_{n_1}(x)}{ l_1(n_1)^{s_1}\cdots l_d(n_d)^{s_d}}
 =   \int_0^x \gL_{l_1,s_1}  \circ \cdots \gL_{l_d,s_{d}} \circ  \ola{a_n(t)} ,
\end{align}
where $\gL_{l,s}$ are obtained from $\gl_{l,s}$ by applying the substitution $t\to\sin^{-1} t$.
\end{thm}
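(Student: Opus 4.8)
The plan is to build Theorem~\ref{thm-Mixed-Parity} by iterating the depth-one recursive formulas already established, exactly as the previous three theorems were assembled. First I would observe that all of Theorems~\ref{thm-ga}, \ref{thm-ga-gb}, \ref{thm-gam-Trig} are the ``pure parity'' special cases of \eqref{equ-Mixed-Parity-Trig}, and that their proofs all flow from the single identity \eqref{strictGr2} (equivalently \eqref{equ-gam-d=1}) together with repeated multiplication by $\cot y$ and integration. The key new point is bookkeeping: in the mixed case the innermost summand $a_n(\sin t)$ may be multiplied by $\sin^{2n}t$, $\sin^{2n+1}t$, or $\sin^{2n-1}t$ depending on the \emph{outermost} choice of $l_d$, and then each successive application of a recursive step must be compatible with the power of $\sin t$ left behind by the previous step. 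So the right object to induct on is not the plain tail but the tail weighted by an appropriate power of $\sin y$.

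Concretely, I would first record three ``seed'' identities for depth one, one for each shape of $l_1(n)$, in the normalized form
\[
\sum_{m>n}\frac{a_m(\sin y)}{l_1(m)^{s_1}}\,(\sin y)^{\epsilon} = \int_0^y \gl_{l_1,s_1}\circ \ola{a_n(\sin t)},
\]
where $\epsilon\in\{0,1,-1\}$ is chosen so that the $\ola{\cdot}$ prefactors ($\ola{\csc t}$, $\ola{\cos t}$, $\ola{\sin t}$) account for the discrepancy; these are literally \eqref{equ-ga-depthd}, \eqref{equ-gb-depthd}, \eqref{equ-gbE-depthd} after the manipulations in $\S$2--4 (note that the $\ge$ variant $q_s$ is not needed here since all inner ``$>$'' are strict), with $a_n(\sin t)=\binn\sin^{2n}t/4^n$ and the extra $\sin^{\pm1}t$ absorbed into $a_n(\sin t)$ via $a_n(\sin t)\sin t$ etc. The content of the $\ola{F(t)}$ notation introduced just before the theorem is exactly designed to carry these scalar prefactors through the iterated integral. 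Then, for the inductive step, I would peel off the outermost index $n_1$: fix $n_2>\cdots>n_d>n$, apply the appropriate depth-one seed identity with $n$ replaced by $n_1$ held as a running lower bound, and interchange the finite inner sum with the integral; since every inner inequality is strict ``$>$'', the induction hypothesis applies verbatim to $\sum_{n_2>\cdots>n_d>n}$ with the new innermost function being $a_{n_1}(\sin t)$ times the requisite power of $\sin t$, which is again of the form handled by the seeds. Telescoping/convergence at the endpoints $y\to\pm\pi/2$ is handled exactly as in Theorem~\ref{thm-ga} via Stirling's estimate $\binn/4^n\sim 1/\sqrt{\pi n}$, so the limit definition on the right is legitimate.

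Finally \eqref{equ-Mixed-Parity-NonTrig} is immediate from \eqref{equ-Mixed-Parity-Trig} by the change of variables $t\mapsto\sin^{-1}t$: under $u=\sin t$ one has $\cot t\,dt=du/(u\sqrt{1-u^2})\cdot\sqrt{1-u^2}=\ldots$, i.e.\ each trigonometric $1$-form and each scalar prefactor pulls back to an algebraic one in $u$, and $a_n(\sin t)=a_n(u)$, so the identity transports formally; this is stated, not proved, in the theorem and needs only the remark that the substitution is a diffeomorphism of $(-\pi/2,\pi/2)$ onto $(-1,1)$ fixing $0$. I expect the main obstacle to be purely organizational rather than conceptual: one must set up the induction so that the power of $\sin y$ that gets attached to the innermost $a_n$ is tracked correctly across the three possible outer shapes and the $s=1$ versus $s\ge2$ dichotomy in $\gl_{2n-1,s}$, and verify that in every branch the leftover prefactor after one peeling step is precisely the one the next seed identity expects. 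Once the normalization $\epsilon$ is chosen consistently (which is exactly what the $\ola{\csc t},\ola{\cos t},\ola{\sin t}$ decorations encode), the induction closes with no new analytic input beyond what is already in $\S$2--4.
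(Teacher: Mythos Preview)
Your proposal is correct and follows essentially the same approach as the paper: the proof there simply records the depth-one ``seed'' identities \eqref{an-it1}--\eqref{an-it4} (which are \eqref{equ-ga-depthd}, \eqref{equ-gb-depthd}, \eqref{equ-gam-d=1}, \eqref{equ-gam-depthd} rewritten with $a_n(\sin t)$), names them $\ga$-, $\gb$-, $\gam$-blocks, and then states that one iterates by repeatedly inserting blocks---exactly your peel-off-$n_1$ induction. Two small cleanups: the third seed you want is \eqref{equ-gam-depthd}, not \eqref{equ-gbE-depthd}; and once the $\ola{\csc t},\ola{\cos t},\ola{\sin t}$ decorations are in place there is no residual $(\sin y)^\epsilon$ factor to track, nor any endpoint issue since the theorem is stated only for $y\in(-\pi/2,\pi/2)$.
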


\begin{proof}
By \eqref{equ-ga-depthd},
\eqref{equ-gb-depthd} and  \eqref{equ-gam-depthd}, we readily get
\begin{align}
\sum_{m>n} \frac{a_m(\sin y)}{(2m)^s} =&\, \int_0^y F_s (1-\csc t \,dt\circ\sec t)a_n(\sin t) \tan t\,dt,\label{an-it1}\\
\sum_{m>n} \frac{a_m(\sin y)}{(2m+1)^s} =&\,\csc y \int_0^y F_s (1-dt\circ\csc t\sec t)a_n(\sin t) \sin t\tan t\,dt. \label{an-it2}\\
\sum_{m>n} \frac{a_m(\sin y)}{2m-1} =&\,\cos y\int_0^y a_n(\sin t)  \tan t \sec t \, dt, \label{an-it3}\\
\sum_{m>n} \frac{a_m(\sin y)}{(2m-1)^s}
=&\,  \sin y \int_0^y F_{s-1} \frac{dt}{\tan^2 t} a_n(\sin t)  \tan t \sec t \, dt \quad (s\ge2).\label{an-it4}
\end{align}
Thus
\begin{align*}
\gl_{2n,s}(t)=&\, F_s\Big(\tan t\,dt-\csc t \,dt\, d(\sec t) \Big )= F_s ( \csc t \,dt\circ \sec t-\cot t\,dt ) , \\
\gl_{2n+1,s}(t)=&\,  \ola{\csc t}\circ F_s\Big (\sin t\tan t\,dt-dt\, d(\sec t) \Big )
=\ola{\csc t}\circ F_s (dt\circ \sec t-\cos t\,dt ),\\
\gl_{2n-1,1}(t)=&\,     \ola{\cos t}\circ  d(\sec t) ,\\
\gl_{2n-1,s}(t)=&\,     \ola{\sin t}\circ F_{s-1} \frac{dt}{\tan^2 t} d(\sec t)
    =\ola{\sin t}\circ F_{s-1} \bigg(\frac{dt}{\sin t\tan t}- \frac{dt}{\tan^2 t}\circ \sec t\bigg) \quad (s\ge2)   .
\end{align*}

For convenience, we call the right-hand side of \eqref{an-it1}
(resp.~\eqref{an-it2}, resp. \eqref{an-it3} and \eqref{an-it4})
a $\ga$-block (resp. $\gb$-block, resp. $\gam$-block). In \eqref{equ-Mixed-Parity-Trig},
each $s_j$ corresponds to (a variation of) such a block. We find that
after starting with a block in \eqref{an-it1}-\eqref{an-it4},
we can  repeatedly applying \eqref{an-it1}-\eqref{an-it4} to insert
all the middle blocks until the end.
This concludes the constructive proof of the theorem.
\end{proof}

\begin{thm}\label{thm-LeadGammaBlock}
Suppose $d\in \N$, $\bfs=(s_1,\dots,s_d)\in\N^d$. Let $l_1(n),\dots,l_d(n)=2n,2n\pm 1$.
Then
\begin{align*}
\sum_{n_0> n_1\succ n_2 \succ\dots \succ n_d\succ 0}
 \frac{a_{n_0}}{(2n_0-1) l_1(n_1)^{s_1}\cdots l_d(n_d)^{s_d}}
=\sum_{n_1\succ n_2 \succ\dots \succ n_d\succ 0}
 \frac{a_{n_1}}{l_1(n_1)^{s_1}\cdots l_d(n_d)^{s_d}},
\end{align*}
where  ``$\succ$'' can be either ``$\ge$'' or ``$>$'', provided the series is defined.
\end{thm}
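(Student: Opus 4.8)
The plan is to reduce the statement to the depth-one case of Corollary~\ref{cor-Chi-Trig}, which I view as the engine behind it. Specialising that corollary to $d=1$ yields, for every $n\in\N_0$,
\begin{align}
\sum_{m>n}\frac{a_{m}}{2m-1}=a_{n}.\tag{$\ast$}
\end{align}
In words: a leading $\gam$-block of weight one with index $2m-1$, taken at $x=1$, converts the numerator $a_m$ of the summand it governs into $a_n$, where $n$ is the next index down -- the block is, in effect, absorbed. So first I would fix an arbitrary tuple $(n_1,\dots,n_d)$ with $n_1\succ n_2\succ\dots\succ n_d\succ 0$, perform the innermost summation over $n_0>n_1$ on the left-hand side, and apply $(\ast)$ with $n=n_1$; this produces exactly the summand $a_{n_1}\big/\big(l_1(n_1)^{s_1}\cdots l_d(n_d)^{s_d}\big)$ of the right-hand side. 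Summing over all admissible tuples $(n_1,\dots,n_d)$ then gives the claimed identity.

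The only technical point is the legitimacy of carrying out the $n_0$-summation first, i.e.\ of regrouping the terms of the multiple series. When each $l_j$ equals $2n$ or $2n+1$, or when $l_d(n)=2n-1$ occurs only together with a strict inequality at the bottom (so that $n_d\ge 1$), every term of the series is positive and Tonelli's theorem applies directly. In the remaining cases the hypothesis that the right-hand series is defined is understood as absolute convergence -- which is automatic for these Ap\'ery-type series whenever they converge at all, since $a_n\sim(\pi n)^{-1/2}$ makes the $n_1$-summand of size $O\big(n_1^{-s_1-1/2}\big)$ -- so Fubini's theorem again licenses the rearrangement. Once the interchange is justified, the inner sum $\sum_{n_0>n_1}a_{n_0}/(2n_0-1)$ converges (absolutely, being a series of positive terms) to $a_{n_1}$ by $(\ast)$, and the proof is complete.

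I do not anticipate a genuine obstacle here: all the substance is already contained in Corollary~\ref{cor-Chi-Trig} -- itself proved via Theorem~\ref{thm-gam-Trig} and repeated L'H\^opital -- and the present theorem amounts to the observation that, in its depth-one form $(\ast)$, that corollary says precisely that a leading weight-one $\gam$-block evaluated at $x=1$ acts as the identity on tails, hence may be inserted or deleted freely at the top of any of the mixed-parity multiple series from Theorem~\ref{thm-Mixed-Parity}. The one thing I would take care to spell out is the absolute-convergence caveat underlying the phrase ``provided the series is defined.''
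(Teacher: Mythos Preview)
Your proposal is correct and follows essentially the same route as the paper, which simply states that the theorem ``follows from Cor.~\ref{cor-Chi-Trig} immediately.'' You have spelled out the details the paper omits --- reducing to the depth-one identity $\sum_{m>n}a_m/(2m-1)=a_n$, summing over $n_0$ first, and justifying the interchange via positivity/absolute convergence --- but the underlying idea is identical.
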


\begin{proof} This follows from Cor.~\ref{cor-Chi-Trig} immediately.
\end{proof}

Due to Thm.~\ref{thm-LeadGammaBlock}, we can always assume that if the leading block is a $\gam$-block then $s_1\ge 2$.

\begin{thm}\label{thm-Mixed-Parity-all}
Let $d\in \N$, $\bfs=(s_1,\dots,s_d)\in\N^d$. Let $l_1(n),\dots,l_d(n)=2n,2n\pm 1$.
Set $\gd(l)=0$ if $l(n)=2n$ and $\gd(l)=1$ if $l(n)=2n\pm 1$.

\begin{enumerate}[label=\upshape{(\alph*)},leftmargin=1cm]
 \item \label{enu:thm-Mixed-Parity-2n2n+1}
If $l_1(n)\ne 2n-1$ then we have
\begin{align}\label{equ-thm-Mixed-Parity}
\Xi(\bfl;\bfs):=\sum_{n_1 \ \underset{1}{\succ}\, \cdots\underset{d-1}{\succ}  n_d\  \underset{d}{\succ} \  0}
 \frac{a_{n_1}}{l_1(n_1)^{s_1}\cdots l_d(n_d)^{s_d}} \in i^{\gd(l_1)}\CMZV_{|\bfs|}^4 ,
\end{align}
where $\underset{j}{\succ}$ is ``$\ge$'' if $l_j(n)=2n+1$ and is $``>''$ otherwise.

 \item \label{enu:thm-Mixed-Parity-gen}
Suppose $s_1\ge 2$ if $l_1(n)=2n-1$ in which case we set $\nu(l_1)=1$, and set $\nu(l_1)=0$ otherwise.
Then we have
\begin{align}\label{equ-thm-Mixed-Parity1}
\sum_{n_1 \ \underset{1}{\succ} \, \cdots\underset{d-1}{\succ} n_d\ \underset{d}{\succ} \ 0}
 \frac{a_{n_1}}{l_1(n_1)^{s_1}\cdots l_d(n_d)^{s_d}} \in    \CMZV_{|\bfs|-\nu(l_1)}^4 \otimes \Q[i].
\end{align}

 \item \label{enu:thm-Mixed-Parity-Any}
 Moreover, the claim in \eqref{equ-thm-Mixed-Parity1} still holds if one changes any of the strict inequalities
$n_j>n_{j+1}$ to $n_j\ge n_{j+1}$ and vice versa, , provided the series is defined.
Here we set $n_{d+1}=0$. In particular,
\begin{align}\label{equ:thm-Mixed-Parity-Any}
\sum_{n_1 \succ n_2 \succ\, \cdots \succ n_d\succ\,0}
 \frac{a_{n_1}}{l_1(n_1)^{s_1}\cdots l_d(n_d)^{s_d}} \in  \CMZV_{\le |\bfs|-\nu(l_1)}^4\otimes\Q[i],
\end{align}
where ``$\succ$'' can be either ``$\ge$'' or ``$>$'', provided the series is defined.
\end{enumerate}
\end{thm}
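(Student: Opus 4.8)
The plan is to reduce everything to the iterated-integral expressions provided by Theorem~\ref{thm-Mixed-Parity} together with a careful analysis of the 1-forms that occur, carried out in the spirit of the change-of-variable argument already sketched for the main theorem of the paper. Concretely, part~\ref{enu:thm-Mixed-Parity-2n2n+1} is the base case: when the leading block is a $\ga$-block or a $\gb$-block, formula \eqref{equ-Mixed-Parity-Trig} with $n=0$ and $a_0\equiv 1$ expresses $\Xi(\bfl;\bfs)$ as an iterated integral $\int_0^{\pi/2}\gl_{l_1,s_1}\circ\cdots\circ\gl_{l_d,s_d}$, where each $\gl_{l_j,s_j}$ is built from the elementary 1-forms $\cot t\,dt$, $\tan t\,dt$, $\csc t\,dt$, $\sec t\,dt$, $dt$, and left-multiplications by $\sin t$, $\cos t$, $\csc t$. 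I would first absorb all the $\ola{F(t)}$ factors: since each $\gl$-block is either a single left-factor followed by honest 1-forms, or a sum of such, expanding the non-commutative products in \eqref{equ-Mixed-Parity-Trig} yields a $\Q$-linear combination of genuine iterated integrals $\int_0^{\pi/2} g_1(t)\,dt\circ\cdots\circ g_w(t)\,dt$ of weight $w=|\bfs|$, where each $g_i$ is a product of the above trigonometric functions (the left-factors $\csc y$, $\cos y$, $\sin y$ at the very front evaluate to $1$, $0$, $1$ at $y=\pi/2$, and the derivative-of-$\sec$ pieces $d(\sec t)=\sec t\tan t\,dt$ are again of this form).

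**Next** I would apply the substitution $t=2\arctan u$, equivalently the Weierstrass-type substitution already used implicitly when passing from \eqref{equ-Mixed-Parity-Trig} to \eqref{equ-Mixed-Parity-NonTrig}. Under $u=\tan(t/2)$ one has $\sin t=2u/(1+u^2)$, $\cos t=(1-u^2)/(1+u^2)$, $dt=2\,du/(1+u^2)$, and the endpoint $t=\pi/2$ becomes $u=1$; every 1-form $g_i(t)\,dt$ becomes $R_i(u)\,du$ with $R_i$ a rational function whose denominator factors into powers of $u$, $1-u$, $1+u$, $1-u^2=(1-u)(1+u)$, and $1+u^2=(1-iu)(1+iu)$. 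Thus the integral becomes a $\Q$-linear combination of iterated integrals over $[0,1]$ of rational 1-forms with poles only at $0,\pm1,\pm i$ — and by a partial-fraction decomposition these are precisely $\Q[i]$-linear combinations of the standard one-forms $du/u$, $du/(\rho-u)$ with $\rho\in\{1,-1,i,-i\}$, i.e.\ iterated integrals representing multiple polylogarithms at fourth roots of unity. Regularization of the divergent pieces (the tangent factors produce $du/(1-u)$ at the upper endpoint) is handled exactly as for ordinary CMZVs, using shuffle regularization; since $\Xi(\bfl;\bfs)$ is a convergent series the divergences cancel in the linear combination, so the result lands in $\CMZV_{|\bfs|}^4$, with the overall factor $i^{\gd(l_1)}$ coming from the single $\csc y\mapsto 1$ versus the $1+u^2$ in the denominator of the $\gb$-block's leading form — tracking parity gives exactly one power of $i$ when $l_1$ is odd-indexed. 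Part~\ref{enu:thm-Mixed-Parity-gen} then follows: if instead the leading block is a $\gam$-block with $s_1\ge2$ (legitimate by Thm.~\ref{thm-LeadGammaBlock}), formula \eqref{an-it4} shows the leading 1-form $F_{s-1}\,dt/\tan^2 t\cdot\tan t\sec t\,dt$ contributes one extra $dt/\tan t$ of "trivial" weight that, after the substitution, becomes a $du/u$ multiplying a weight-$(|\bfs|-1)$ CMZV block, so the whole thing lies in $\CMZV_{|\bfs|-1}^4\otimes\Q[i]$; the case $l_1=2n-1,\ s_1\ge2$ is exactly where $\nu(l_1)=1$.

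**For part~\ref{enu:thm-Mixed-Parity-Any}** I would argue that switching a strict inequality $n_j>n_{j+1}$ to $n_j\ge n_{j+1}$ (or back) changes the sum by "diagonal" terms in which $n_j=n_{j+1}$, and collecting these via the stuffle/harmonic product shows the new series differs from the old one by a $\Q$-linear combination of series of the same shape but strictly smaller depth (the two indices $l_j,l_{j+1}$ at the merged level combine into finitely many indices of the allowed type $2n,2n\pm1$, at a strictly smaller total weight or depth). Iterating, every such series is a $\Q[i]$-combination of series covered by \ref{enu:thm-Mixed-Parity-gen} of weight $\le|\bfs|-\nu(l_1)$, which gives \eqref{equ:thm-Mixed-Parity-Any}. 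One subtlety I would flag: when $l_{j+1}(n)=2n+1$ and the inequality is already "$\ge$", the diagonal merge $n_j=n_{j+1}$ must be compatible with the convention in \ref{enu:thm-Mixed-Parity-2n2n+1} that odd-indexed blocks carry "$\ge$"; I would handle this by the $\gb^\star$ recursion of Thm.~\ref{thm-ga-gb} rather than the $\gb$ recursion, so that equalities are native to the expression and no extra combinatorics is needed there.

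**The main obstacle** I anticipate is not any single step but the bookkeeping of the non-commutative expansions: because each $\gl$-block in Theorem~\ref{thm-Mixed-Parity} is itself a \emph{difference} of two terms (an "integrated" part and an "$\circ\sec$" part), expanding a depth-$d$ product produces $2^d$ iterated integrals, and one must verify uniformly — independently of which parities $l_j$ are chosen — that after the Weierstrass substitution every resulting 1-form has poles only in $\{0,\pm1,\pm i\}$ and that the regularization is consistent across all $2^d$ terms so the divergences genuinely cancel. Making this uniform in $\bfl$ (rather than checking cases) is where the real work lies; the cleanest route is probably to show once and for all that the \emph{set} of 1-forms $\{\cot t\,dt,\ \tan t\,dt,\ \csc t\,dt,\ \sec t\,dt,\ dt,\ \sec t\tan t\,dt\}$ together with the left-multipliers $\{\sin t,\cos t,\csc t,\sec t\}$ is closed, under $t=2\arctan u$, inside the $\Q[i]$-span of $\{du/u,\ du/(1\mp u),\ du/(1\mp iu)\}$ and their products with $u,\ 1/u,\ (1\pm u^2)^{\pm1}$, and that products of these stay in the same span — a finite check that then makes parts (a)–(c) essentially formal.
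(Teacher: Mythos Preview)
Your overall architecture matches the paper's: express $\Xi(\bfl;\bfs)$ as a trigonometric iterated integral via the recursive formulas, change variables to land in rational 1-forms with poles at fourth roots of unity, and read off level-4 CMZVs. The paper uses $t\mapsto\sin^{-1}\!\big((1-u^2)/(1+u^2)\big)$ rather than the Weierstrass $u=\tan(t/2)$, but these differ only by $t\mapsto\pi/2-t$, so that is cosmetic.

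There are, however, two genuine gaps.

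\medskip
\textbf{The parity of $i$.} You assert that ``tracking parity gives exactly one power of $i$ when $l_1$ is odd-indexed,'' but this is precisely the nontrivial content of part~\ref{enu:thm-Mixed-Parity-2n2n+1}. After partial fractions, each occurrence of $1+u^2$ in a denominator contributes one factor of $i$; the question is why the total count $N(dt)$ of such forms is even when $l_1(n)=2n$ and odd when $l_1(n)=2n+1$, \emph{regardless of the interior $l_2,\dots,l_d$}. This is not automatic: the middle $\gb^\star$-blocks each introduce one or two copies of $dt$, so $N(dt)$ depends on the entire chain, not just $l_1$. The paper carries out a block-by-block case analysis (first block, middle blocks, end block, for each of the four transitions $\ga\!\circ\!\ga$, $\ga\!\circ\!\gb^\star$, $\gb^\star\!\circ\!\ga$, $\gb^\star\!\circ\!\gb^\star$) and establishes the key invariant that the 1-form $dt$ always carries a trailing $\circ\sec t$ or $\circ\tan t$ depending on what follows; the parity of $N(dt)$ is then proved by induction on the block chain. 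Your ``finite closure check'' of the 1-form set would show that everything lands in $\CMZV^4_{|\bfs|}\otimes\Q[i]$, but it cannot by itself pin down $i^{\gd(l_1)}$. A second point here: the paper's block analysis also shows that $\tx_1$ never appears as the leading 1-form after the substitution, so each iterated integral is \emph{individually} admissible; your appeal to ``shuffle regularization, divergences cancel because the series converges'' is unnecessary and would be delicate to make rigorous.

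\medskip
\textbf{The weight drop in \ref{enu:thm-Mixed-Parity-gen}.} Your explanation of the $\nu(l_1)=1$ drop is incorrect. You say the leading $\gam$-block contributes ``one extra $dt/\tan t$ of `trivial' weight that \dots\ becomes a $du/u$ multiplying a weight-$(|\bfs|-1)$ block''; but $du/u$ has weight $1$, so this would give weight $|\bfs|$, not $|\bfs|-1$. The actual mechanism is integration by parts: the $\gam$-block integrand contains $\cot^2 t\,dt = -d(\cot t)-dt$, and the $-d(\cot t)$ piece telescopes against the adjacent $\cot t\,dt$ of the next block, reducing the length of the iterated integral by one. The paper does this explicitly in Examples~\ref{eg-gam-ga-block} and~\ref{eg-gam-gb-block} (computing $\Xi(2n-1,2n;s,b)$ and $\Xi(2n-1,2n+1;s,b)$), obtaining a recursion of the form $\Xi'(s,b)=-\Xi'(s-1,b)+(\text{weight }s{+}b{-}1\text{ term})$, and then summing the geometric telescoping. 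This is where the weight drop really comes from, and also why one must first reduce (via index shifts and partial fractions, as in your part~\ref{enu:thm-Mixed-Parity-Any} argument) to the case where $2n-1$ occurs only at $j=1$.

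\medskip
Your plan for \ref{enu:thm-Mixed-Parity-Any} via stuffle/diagonal merging is essentially what the paper does (it phrases it as inclusion--exclusion plus partial fractions), and is fine.
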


\begin{proof} We first claim that we may reduce the sums in the first two cases to
those sums where $l_j(n)=2n-1$ appears only when $j=1$, if it ever appears.
We can prove this by induction on the depth in exactly the same way as was used in
the proof of \cite[Thm.~4.2(b)]{XuZhao2022a}. So leave this to the interested reader.

\medskip
\ref{enu:thm-Mixed-Parity-2n2n+1}
Set $F_s=(\cot t\, dt)^{s-1}$.
Recall that \eqref{equ-ga-depthd} and \eqref{equ-gbE-depthd} provide us the following iterative structure:
\begin{align}\label{equ-ga-depthd2}
\sum_{m>n} \frac{a_m(\sin y)}{(2m)^s}
 =&\,\int_0^y F_s (1-\csc t \,dt\circ\sec t) a_n(\sin t) \tan t\,dt,\\
\sum_{m\ge n} \frac{a_m(\sin y) }{(2m+1)^s}
 =&\,\csc y\int_0^y F_s (\csc t-dt\circ\sec t) a_n(\sin t) \tan t\,dt. \label{equ-gbE-depthd2}
\end{align}
The key idea is to use \eqref{equ-ga-depthd2} and \eqref{equ-gbE-depthd2} repeatedly to express \eqref{equ-thm-Mixed-Parity}
as an iterated integral and then use the change of variables $t\to \sin^{-1} [(1-t^2)/(1+t^2)]$ to convert this iterated integral
to a $\Q$-linear combination of iterated integrals that are clearly in $\CMZV_{|\bfs|}^4.$

To begin, similarly to the proof of Thm.~\ref{thm-Mixed-Parity}
we call the iterated integral in an iteration of \eqref{equ-ga-depthd2} a $\ga$-block and
the iterated integral in an iteration of \eqref{equ-gbE-depthd2} a $\gb^\star$-block.
The extra function $\csc y$ before the $\gb^\star$-block brings the main complication into this process since it changes the shape of
the block in front of it.
Then we have four different cases for the first block:

\begin{enumerate}[label=(\bf{1st-block},leftmargin=3cm]
 \item $\boldsymbol{\ga}\circ\ga$) $\ga$-block, followed by $\ga$-block:
\begin{align}\label{1stgaga-block}
\hskip-2.5cm F_s (1-\csc t \,dt\circ\sec t) \tan t\,dt
=&\, F_s(\tan t\,dt-\csc t\sec t \,dt+\csc t \,dt \circ\sec t ) \nonumber \\
=&\, F_s(\csc t \,dt \circ\sec t-\cot t \,dt).
\end{align}

 \item $\boldsymbol{\ga}\circ\gb^\star$) $\ga$-block, followed by a $\gb^\star$-block:
\begin{align}\label{1stgagb-block}
\hskip-2.5cm F_s(1-\csc t \,dt\circ\sec t) \sec t\,dt
=&\, F_s(\sec t\,dt- \csc t dt \, d\tan t)\nonumber \\
=&\, F_s( \csc t \, dt\circ\tan t).
\end{align}

 \item $\boldsymbol{\gb^\star}\circ\ga$) $\gb^\star$-block, followed by a $\ga$-block:
\begin{align}\label{1stgbga-block}
\hskip-2.5cm F_s(\csc t-dt\circ\sec t) \tan t\,dt
=F_s(\sec t\,dt - dt\, d\sec t)=F_s( dt\circ\sec t).
\end{align}

 \item $\boldsymbol{\gb^\star}\circ\gb^\star$) $\gb^\star$-block, followed by another $\gb^\star$-block:
\begin{align}\label{1stgbgb-block}
\hskip-2.5cm F_s(\csc t-dt\circ\sec t) \sec t\,dt
=F_s(\csc t \sec t\, dt - dt\, d\tan t)
=F_s(\cot t \, dt + dt\circ\tan t).
\end{align}
\end{enumerate}
The 1-forms appearing in the above are listed as follows:
\begin{align}\label{starting1-forms}
 \csc t \,dt , \quad\cot t\,dt, \quad dt.
\end{align}
Moreover, the following observation is crucial later:
\begin{align}\label{dtObserve}\tag{$\star$}
\aligned
\  & \text{The 1-form $dt$ appears only when the first block is $\gb$ and it always} \\
\  & \text{has a trailing $\sec t$ (resp. $\tan t$) if the next block is $\ga$ (resp. $\gb$).}
\endaligned
\end{align}
Hence, all the blocks after the first may (or may not) be multiplied by either $\tan t$ or $\sec t$. Thus
for the middle blocks (i.e., neither initial nor end) we have the following cases:
\begin{alignat}{4}
&\, \text{\bf Mid-block } \boldsymbol{\ga}\circ\ga: \qquad &\, & (\text{$1$ or $\sec t$})F_s(\csc t \,dt \circ\sec t-\cot t \,dt), \label{midgaga-block}\\
&\, \text{\bf Mid-block } \boldsymbol{\ga}\circ\gb^\star: \qquad &\, & (\text{$1$ or $\sec t$})F_s(\csc t\, dt\circ\tan t), \label{midgagb-block}\\
&\, \text{\bf Mid-block } \boldsymbol{\gb^\star}\circ\ga: \qquad &\, & (\text{$1$ or $\tan t$})F_s(dt\circ\sec t), \label{midgbga-block} \\
&\, \text{\bf Mid-block } \boldsymbol{\gb^\star}\circ\gb^\star: \qquad &\, & (\text{$1$ or $\tan t$})F_s(\cot t \, dt + dt\circ\tan t). \label{midgbgb-block}
\end{alignat}
Therefore the following additional 1-forms may appear:
\begin{align}\label{mid1-forms}
 \tan t\,dt, \quad \sec t\csc t \,dt.
\end{align}
We now turn to the ending block. Since
\begin{align*}
(\tan t\,dt-\csc t \,dt \int_0^t \sec x\tan x\, dx)&\, = (\tan t\,dt-(\sec t-1)\csc t \,dt)=(\csc t-\cot t)\,dt,\\
(\tan t\csc t\,dt -dt\int_0^t \sec x\tan x\, dx)&\, =(\sec t\,dt -(\sec t-1) \,dt)=dt,
\end{align*}
we may have the following forms for the end block:
\begin{alignat}{4}
&\, s_d\ge 2,\ \text{\bf End-block } \boldsymbol{\ga}: \qquad &\, & (\text{$1$ or $\sec t$})F_s (\csc t-\cot t)\,dt, \label{endga-block}\\
&\, s_d\ge 2,\ \text{\bf End-block } \boldsymbol{\gb^\star}: \qquad &\, & (\text{$1$ or $\tan t$})F_s \,dt. \label{endgb-block}
\end{alignat}
If $s=1$ then since
\begin{align*}
\tan t(\csc t-\cot t)=\sec t-1, \quad \sec t(\csc t-\cot t)=\csc t(\sec t-1)
\end{align*}
we see that the end block has the form
\begin{alignat}{4}
&\, s_d=1,\ \text{\bf End-block } \boldsymbol{\ga}: \qquad &\, &
(\csc t-\cot t)\, dt \text{ or } \csc t(\sec t-1)\,dt, \label{endga-block-Depth1} \\
&\, s_d=1,\ \text{\bf End-block } \boldsymbol{\gb^\star}: \qquad &\, & dt \text{ or } \tan t\,dt. \label{endgb-block-Depth1}
\end{alignat}
Under the change of variables $t\to \sin^{-1} [(1-t^2)/(1+t^2)]$ we have
\begin{align}\label{equ-1forms-ta-tx}
dt \to i \td_{-i,i}, \
\cot t\,dt \to \ty,\
\tan t\,dt \to \tz,\
\csc t \,dt \to \td_{-1,1}, \
\sec t \,dt \to -\ta, \
\sec t\csc t \,dt \to \ty+\tz,
\end{align}
where $\td_{\xi,\xi'}=\tx_\xi-\tx_{\xi'}$, $\ty=\tx_{-i}+\tx_{i}-\tx_{-1}-\tx_{1}$ and $\tz=-\ta-\tx_{-i}-\tx_{i}$.
We see that under the above change of variables, which reverses the order of the 1-forms,
the 1-form $\ta$ does not appear at the end (see \eqref{starting1-forms}).
On the other hand, the only 1-forms that can appear at the beginning are:
\begin{align*}
dt \to i \td_{-i,i}, \quad
\tan t\,dt \to \tz,\quad
(\csc t-\cot t)\, dt \to \td_{-1,1}-\ty=2\tx_{-1}-\tx_{-i}-\tx_{i},\\
\sec t \,dt \to -\ta, \quad
\csc t(\sec t-1)\,dt \to \ty+\tz-\td_{-1,1}=-\ta-2\tx_{-1}.
\end{align*}
The key observation is that $\tx_1$ does not appear at the beginning.
Consequently, all the iterated integrals are convergent and provide
the real or the imaginary part of some admissible CMZVs of level 4.

To determine exactly whether it is the real or the imaginary part, we need to count
the number of $i \td_{-i,i}$'s, the only 1-form that has the imaginary coefficient $i$,
which is produced only by the original 1-form $dt$ in the trigonometric iterated integral expression.
To do this, we break into two cases, guided by the crucial observation \eqref{dtObserve} above.
To save space, we denote by $N(dt)$ the number of 1-form $dt$ in the trigonometric iterated integral expression
of the sum in \eqref{equ-thm-Mixed-Parity}.

(A) The starting block is $\gb$. When followed by another $\gb^\star$-block the trailing $\tan t$ can only be combined with two other forms,
namely, $\cot t\,dt$ or $dt\circ\tan t$ (see \eqref{midgbgb-block}),
which produces $dt^2$ or $dt (\tan t\,dt) \circ\tan t$. Repeating this until the end block
if no $\ga$-block appears, or until a transition $\gb^\star$-$\ga$ block chain emerges \eqref{midgbga-block},
we see that either (i) there are even number of $dt$'s without trailing $\circ\tan t$ or (ii) there are
odd number of $dt$'s with a trailing $\circ\tan t$. If $\ga$-block does not appear at all then the end block is given
by \eqref{endgb-block} ($s=s_d\ge 2$) and \eqref{endgb-block-Depth1} ($s=s_d=1$) so that the parity changes in case (i) or
the parity doesn't change in case (ii). If there is a transition $\gb^\star$-$\ga$ block chain, then there is a trailing $\circ\sec t$ produced
while there are odd number of $dt$'s in front of $\circ\sec t$.
By case (B) below we see that the number of $dt$'s after this transition must be even.
To summarize, we see that $N(dt)$ is always odd if the starting block is $\gb$.

(B) The starting block is $\ga$. If there is no $\gb^\star$-block then clearly $N(dt)=0$ by \eqref{1stgaga-block},
\eqref{midgaga-block}, \eqref{endga-block} and \eqref{endga-block-Depth1}. Suppose $\gb^\star$-block does appear.
By \eqref{midgbga-block} and \eqref{midgbgb-block}, we see that this block produces either
\begin{enumerate}[label=(\roman*),leftmargin=2cm]
  \item  even number of $dt$'s with a trailing $\sec t$ when followed by a $\ga$-block, or
  \item  even number of $dt$'s with a trailing $\tan t$ when followed by a $\gb^\star$-block, or
  \item odd number of $dt$'s without a trailing function.
\end{enumerate}
The case (i) leads to no $dt$'s until the next $\gb^\star$-block appears.
For the other two cases, we can argue exactly as in case (A) above and
show that these repeat until the end or a transition $\ga$-$\gb$ block chain (back to case (i)).
Repeating the above argument in the three cases (i)--(iii) we see that until the end we still have the same three cases.
If the end block is $\ga$ then we must be back in case (i) and by \eqref{endga-block} and \eqref{endga-block-Depth1}
this block does not produce the 1-form $dt$ so that $N(dt)$ is even.
If the end the block is $\gb$ then it has the form $\tan t F_s\,dt$ in case (ii) and $F_s\,dt$ in case (iii).
In case (ii) either two $dt$'s or no $dt$ is produced so it doesn't change
of the parity of $N(dt)$. In case (iii) one $dt$ is produced which changes $N(dt)$ to even. To summarize, we see
that $N(dt)$ is always even if the starting block is $\ga$.

\medskip
\ref{enu:thm-Mixed-Parity-gen} and
\ref{enu:thm-Mixed-Parity-Any} can be proved by the same proof as that for \cite[Thm.~4.2(b)(c)(d)]{XuZhao2022a}.
We may first reduce the general case to the case where if $\gam$-block appears then it only appears as the first block,
in which case we can assume the weight of this block $s\ge2$ by Thm.~\ref{thm-LeadGammaBlock}. We now further
consider two subcases: (i) second block is a $\ga$-block of weight $b$ and (ii) second block is a $\gb^\star$-block of weight $b$.
Thus we have the following two kinds of iterated integrals to consider:
\begin{align*}
(i):&\, \qquad  \int_0^{\pi/2} F_{s-1} \frac{dt}{\tan^2 t} \tan t \sec t \, dt F_{b-1} (\csc t-\cot t)\,dt \cdots ,\\
(ii):&\, \qquad  \int_0^{\pi/2} F_{s-1} \frac{dt}{\tan^2 t} \sec^2 t \, dt F_{b-1}  \,dt \cdots.
\end{align*}
The claim in the theorem then follows immediately from the computation in Examples \ref{eg-gam-ga-block} and
\ref{eg-gam-gb-block}. Indeed, from the examples we see that the claims in the theorem hold when
the first two blocks are given by either $\gam$-$\ga$-block or $\gam$-$\gb^\star$-block chains.
But if there are more blocks after these two, the argument in case (a) applies since these additional
one are either $\ga$-blocks or $\gb^\star$-blocks.

This completes the proof of the theorem.
\end{proof}

\begin{re}
The theorem generalizes the first inclusion relation in \cite[Thm.~9.6]{XuZhao2021b}.
\end{re}

\begin{cor}\label{cor-Mixed-Parity-zeta-gbE}
Suppose $d\in \N$, $\bfs=(s_1,\dots,s_d)\in\N^d$. Let $l_1(n),\dots,l_d(n)=2n,2n+1$.
Then we have
\begin{align}
\sum_{n_1>n_2\succ \dots\succ n_d\  \underset{d}{\succ} \ 0}
 \frac{a_{n_1}}{n_1^{s_1}l_2(n_2)^{s_2}\cdots l_d(n_d)^{s_d}} \in &\, \CMZV_{|\bfs|}^4 ,  \label{equ-cor-Mixed-Parity1}\\
\sum_{n_1\ge n_2\succ \dots\succ n_d\  \underset{d}{\succ} \ 0}
 \frac{a_{n_1}}{(2n_1+1)^{s_1}l_2(n_2)^{s_2}\cdots l_d(n_d)^{s_d}} \in &\, i\CMZV_{|\bfs|}^4 ,  \label{equ-cor-Mixed-Parity2}\\
\sum_{n_1\succ \dots\succ n_d \  \underset{d}{\succ} \ 0}
 \frac{a_{n_1}}{l_1(n_1)^{s_1}\cdots l_d(n_d)^{s_d}} \in &\,  \CMZV_{|\bfs|}^4 \otimes \Q[i], \label{equ-cor-Mixed-Parity}
\end{align}
where $\succ$ is either ``$\ge$'' or $``>''$.
\end{cor}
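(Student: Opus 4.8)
The plan is to deduce all three inclusions from Theorem~\ref{thm-Mixed-Parity-all}\ref{enu:thm-Mixed-Parity-2n2n+1}, exploiting the fact that, since every $l_j(n)$ equals $2n$ or $2n+1$, no $\gam$-block ever occurs, so that $\nu(l_1)=0$ and $\gd(l_1)\in\{0,1\}$ according to whether $l_1(n)=2n$ or $l_1(n)=2n+1$.

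I would first settle \eqref{equ-cor-Mixed-Parity2}, which is the case $l_1(n)=2n+1$: here the outermost relation $n_1\ge n_2$ is precisely the one prescribed in \eqref{equ-thm-Mixed-Parity} (``$\ge$'' because $l_1(n)=2n+1$), so Theorem~\ref{thm-Mixed-Parity-all}\ref{enu:thm-Mixed-Parity-2n2n+1} places the sum in $i^{\gd(l_1)}\CMZV^4_{|\bfs|}=i\,\CMZV^4_{|\bfs|}$. For \eqref{equ-cor-Mixed-Parity1} I would use $n_1^{-s_1}=2^{s_1}(2n_1)^{-s_1}$ to rewrite the left-hand side as $2^{s_1}\,\Xi(\bfl;\bfs)$ with $l_1(n)=2n$; now $\gd(l_1)=0$, the prescribed outermost relation is ``$>$'' (which is exactly what appears), and Theorem~\ref{thm-Mixed-Parity-all}\ref{enu:thm-Mixed-Parity-2n2n+1} gives membership in $i^0\CMZV^4_{|\bfs|}=\CMZV^4_{|\bfs|}$, a $\Q$-vector space, so the rational factor $2^{s_1}$ is harmless. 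Then \eqref{equ-cor-Mixed-Parity} follows immediately by splitting on the two possibilities for $l_1(n)$: one case lands in $\CMZV^4_{|\bfs|}$, the other in $i\,\CMZV^4_{|\bfs|}$, and both lie inside $\CMZV^4_{|\bfs|}\otimes\Q[i]$.

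The one point requiring care is that in the corollary the interior relations ``$\succ$'' (and, in \eqref{equ-cor-Mixed-Parity}, also $n_1\succ n_2$) are left arbitrary, whereas Theorem~\ref{thm-Mixed-Parity-all}\ref{enu:thm-Mixed-Parity-2n2n+1} as stated fixes each relation in terms of the corresponding $l_j$. To pass between the two I would split every ``$\ge$'' as ``$>$'' plus its diagonal $n_j=n_{j+1}$; each diagonal identification either merges two blocks of equal parity into a single block of the same weight (the depth dropping by one), or, when the merged blocks have opposite parity, produces after a partial-fraction expansion a $\Q$-linear combination of sums of the same shape but of strictly smaller weight. None of these interior surgeries touches the leading (outermost) block, which alone governs whether the real or the imaginary part is produced and hence fixes the power of $i$; so the $i^{\gd(l_1)}$ assignment survives, and iterating the reduction exactly as in the proof of Theorem~\ref{thm-Mixed-Parity-all}\ref{enu:thm-Mixed-Parity-Any} brings us back to the canonical cases already settled. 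The main obstacle, then, is purely this inequality bookkeeping together with the weight accounting in the opposite-parity merge; everything else is a routine adaptation of arguments already in place.
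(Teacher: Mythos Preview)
Your proposal is correct and follows essentially the same route as the paper: reduce to Theorem~\ref{thm-Mixed-Parity-all}\ref{enu:thm-Mixed-Parity-2n2n+1} and then handle the freedom in each ``$\succ$'' by the Principle of Inclusion--Exclusion, using partial fractions on the diagonals where the two merged denominators have opposite parity. The paper's own proof is the single sentence ``convert $\ge$ to $>$ (and vice versa) by using partial fractions,'' so you have in fact supplied more detail than the paper does, including the correct observation that the outermost block is untouched in \eqref{equ-cor-Mixed-Parity1} and \eqref{equ-cor-Mixed-Parity2}.
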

\begin{proof}
The proof easily follows from the fact that by using the Principle of Inclusion and Exclusion we may
convert $\ge$ to $>$ (and vice versa) by using partial fractions.
\end{proof}

\begin{re}
Since the proofs of Thm.~\ref{thm-Mixed-Parity-all} and Cor.~\ref{cor-Mixed-Parity-zeta-gbE}
are both constructive we see that every sum of the form
\eqref{equ-thm-Mixed-Parity}, or more generally of the form \eqref{equ-cor-Mixed-Parity},
can be computed exactly in terms of CMZVs of level 4.
\end{re}

\section{Ap\'ery-type series involving squares of central binomial coefficients}
In this section, we will consider another class of Ap\'ery-type series by replacing $a_n$ by $a_n^2$
in all the series appeared in the previous sections.

\begin{thm}\label{thm-binnSquare}
Keep notation as in Thm.~\ref{thm-Mixed-Parity-all}. Assume $s_1\ge 3$.

\begin{enumerate}[label=\upshape{(\alph*)},leftmargin=1cm]
 \item  \label{enu:thm-binnSquare-2n2n+1}
 Let $l_1(n),\dots,l_d(n)=2n,2n+1$. %Set $\mu(l)=1$ if $l(n)=2n$ and  $\mu(l)=0$ otherwise.
Then we have
\begin{align}\label{equ-sqare-Mixed-Parity1}
\sum_{n_1 \ \underset{1}{\succ} \ \cdots\underset{d-1}{\succ} n_d\ \underset{d}{\succ} \, 0}
 \frac{ a_{n_1}^2}{l_1(n_1)^{s_1}\cdots l_d(n_d)^{s_d}} \in  \frac{i}{\pi}\CMZV_{|\bfs|+1}^4.
\end{align}

 \item \label{enu:thm-binnSquare-gen}
 More generally, if $l_1(n),\dots,l_d(n)=2n,2n\pm 1$ then we have
\begin{align}\label{equ-sqare-Mixed-Parity2}
\sum_{n_1 \ \underset{1}{\succ} \ \cdots\underset{d-1}{\succ} n_d\ \underset{d}{\succ} \, 0}
 \frac{ a_{n_1}^2}{l_1(n_1)^{s_1}\cdots l_d(n_d)^{s_d}} \in  \frac{1}{\pi}\CMZV_{\le \max\{|\bfs|+1-\eta(l_1),\iota(l_2)\}}^4\otimes\Q[i],
\end{align}
where $\eta(l)=2$ if $l(n)=2n-1$ and $\eta(l)=0$ otherwise,
 $\iota(l)=2$ if $l(n)=2n$ and $\iota(l)=1$ otherwise (including $\iota(\emptyset)=1$).

 \item  \label{enu:thm-binnSquare-Strict}
 Moreover,
\begin{align*}
\sum_{n_1\underset{1}{\succ} n_2 \succ\cdots \succ n_d\succ \, 0}
 \frac{ a_{n_1}^2}{l_1(n_1)^{s_1}\cdots l_d(n_d)^{s_d}} \in \frac{1}{\pi}\CMZV_{\le \max\{|\bfs|+1-\eta(l_1),\iota(l_2)\}}^4\otimes\Q[i],
\end{align*}
where  ``$\succ$'' is either ``$\ge$'' or ``$>$'', provided the series is defined.
\end{enumerate}
\end{thm}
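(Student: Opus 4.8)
The plan is to deduce all three parts from Theorem~\ref{thm-Mixed-Parity-all} by inserting a single beta integral. The only new ingredient is the Wallis evaluation $\int_0^{\pi/2}\sin^{2n}\theta\,d\theta=\frac{\pi}{2}a_n$ (the beta integral $B(n+1/2,1/2)$), which gives the key identity
\[
a_n^2=\frac{2}{\pi}\int_0^{\pi/2} a_n(\sin\theta)\,d\theta=\frac{2}{\pi}\int_0^1\frac{a_n(u)}{\sqrt{1-u^2}}\,du .
\]
Substituting this for $a_{n_1}^2$ on the left-hand sides of \eqref{equ-sqare-Mixed-Parity1}--\eqref{equ-sqare-Mixed-Parity2} and interchanging the summation with the integral --- legitimate by Tonelli's theorem, since every term is nonnegative and, $a_{n_1}(\sin\theta)=a_{n_1}\sin^{2n_1}\theta$ being increasing in $\theta$ on $[0,\pi/2]$, the $\theta$-integrand is bounded above by the value of the original convergent series --- we obtain
\[
\sum_{n_1>\cdots>n_d>0} \frac{a_{n_1}^2}{l_1(n_1)^{s_1}\cdots l_d(n_d)^{s_d}}
=\frac{2}{\pi}\int_0^{\pi/2}\Bigg(\sum_{n_1>\cdots>n_d>0} \frac{a_{n_1}(\sin\theta)}{l_1(n_1)^{s_1}\cdots l_d(n_d)^{s_d}}\Bigg)\,d\theta .
\]
By Theorem~\ref{thm-Mixed-Parity} with $n=0$ (so $a_0\equiv1$), the inner sum equals an explicit iterated integral $\int_0^{\theta}\gl_{l_1,s_1}\circ\cdots\circ\gl_{l_d,s_d}$ carrying a leading prefactor $1$ if $l_1(n)=2n$, $\csc\theta$ if $l_1(n)=2n+1$, and $\sin\theta$ if $l_1(n)=2n-1$ (the assumption $s_1\ge3$ rules out the $\cos\theta$-prefactor of \eqref{an-it3}). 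Folding the outer $\int_0^{\pi/2}d\theta$ into the iterated integral makes the right-hand side equal to $2/\pi$ times a single iterated integral on $[0,\pi/2]$ whose leftmost $1$-form is $dt$, $\csc t\,dt$, or $\sin t\,dt$ respectively.

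From here I would rerun the proof of Theorem~\ref{thm-Mixed-Parity-all}: first push any $\gam$-block to the front as in \cite[Thm.~4.2(b)]{XuZhao2022a} (so $s_1\ge3$ there), then apply the order-reversing substitution $t\to\sin^{-1}[(1-t^2)/(1+t^2)]$ of \eqref{equ-1forms-ta-tx}, and finally check admissibility --- $\tx_1$ absent from the leftmost slot, $\ta$ absent from the rightmost slot. In the case $l_1(n)=2n$ or $2n+1$, which is part~\ref{enu:thm-binnSquare-2n2n+1}, the prepended $1$-form is $dt$ (which maps to $i\,\td_{-i,i}$) or $\csc t\,dt$ (which maps to $\td_{-1,1}$), neither of which is $\ta$, so admissibility is preserved; the extra integration raises the weight by exactly one, yielding $\CMZV_{|\bfs|+1}^4$, and the factor $i/\pi$ comes out uniformly because $1/\pi$ is contributed by the beta integral while the count $N(dt)$ of $dt$-forms (each becoming a factor $i$ after the substitution) is made \emph{odd} in both subcases --- the prepended $dt$ flips the previously even count of the $\ga$-leading analysis of Theorem~\ref{thm-Mixed-Parity-all}, and in the $\gb^\star$-leading case $N(dt)$ was already odd and a $\csc t\,dt$ is not a $dt$.

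For parts~\ref{enu:thm-binnSquare-gen} and \ref{enu:thm-binnSquare-Strict} the genuinely new point is a leading $\gam$-block ($l_1(n)=2n-1$), where the prepended form is $\sin t\,dt$; since $\sin t$ becomes $(1-u^2)/(1+u^2)$ under the substitution, this form cannot be fed into the CMZV calculus directly (it would produce a double pole), so it must first be absorbed into the following block(s). I would carry out this absorption by repeated integration by parts --- the boundary contributions vanish because $\cos(\pi/2)=0$ and the inner iterated integral vanishes at $0$ --- together with the explicit computations for a $\gam$-block followed by an $\ga$- or $\gb^\star$-block in Examples~\ref{eg-gam-ga-block} and \ref{eg-gam-gb-block} and the identities of Theorem~\ref{thm-LeadGammaBlock} and Corollary~\ref{cor-Chi-Trig}. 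The net effect is that the $\gam$-block, its $\sin t\,dt$ prefactor, and the block immediately after it collapse, dropping the weight by $2$ below $|\bfs|+1$ --- hence the shift $\eta(l_1)=2$ --- while in the degenerate short cases (e.g.\ $d=1$, or a very short second block) the weight is governed instead by the $\iota(l_2)$ term; the remaining blocks are ordinary $\ga$- and $\gb^\star$-blocks, treated exactly as in part~\ref{enu:thm-binnSquare-2n2n+1}, now only up to $\Q[i]$-combinations because $N(dt)$ is no longer pinned to a single parity. Switching any ``$>$'' to ``$\ge$'' and back is done by the Principle of Inclusion--Exclusion and partial fractions exactly as in Corollary~\ref{cor-Mixed-Parity-zeta-gbE}, which does not affect CMZV membership. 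The hypothesis $s_1\ge3$ is precisely what gives the $\gam$-block reduction enough weight to absorb the extra $\sin t\,dt$ uniformly. The main obstacle I anticipate is exactly this absorption and the careful tracking of the resulting weights and powers of $i$, together with justifying the interchange of summation and integration near $\theta=0$, where the $\csc\theta$- and $\sin t\,dt$-type factors must be checked to remain integrable against the vanishing iterated integrals.
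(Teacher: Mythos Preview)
Your proposal is correct and follows essentially the same route as the paper: insert the Wallis integral $\int_0^{\pi/2}\sin^{2n}t\,dt=\frac{\pi}{2}a_n$ to turn $a_{n_1}^2$ into $\frac{2}{\pi}\int_0^{\pi/2}a_{n_1}(\sin\theta)\,d\theta$, prepend the resulting $1$-form ($dt$, $\csc t\,dt$, or $\sin t\,dt$ according to $l_1$) to the iterated integral of Theorem~\ref{thm-Mixed-Parity}, and then recycle the admissibility and $N(dt)$-parity analysis from the proof of Theorem~\ref{thm-Mixed-Parity-all}. The paper handles the leading $\gam$-block in~\ref{enu:thm-binnSquare-gen} and~\ref{enu:thm-binnSquare-Strict} in exactly the sketchy way you anticipate, deferring the actual absorption of $\sin t\,dt$ and the weight-drop bookkeeping to the worked examples in Section~7 and the Appendix rather than giving a uniform argument.
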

\begin{proof} The key observation is to apply the Wallis integrals
\begin{align*}
 \int_0^1 \frac{x^{2n}}{\sqrt{1-x^2}} \,dx=\int_0^{\pi/2} \sin^{2n} t \,dt= \frac12 B \Big(n+\frac12,\frac12\Big)=\frac{\pi}{2}a_{n}.
\end{align*}
In the following proof, we drop the restriction on the summation indices to save space.

\medskip
(i) When $l_1(n)=2n+1$ by \eqref{equ-Mixed-Parity-Trig} we see that the sum
\begin{align*}
 \sum
\frac{a_{n_1}(\sin y) }{ l_1(n_1)^{s_1}\cdots l_d(n_d)^{s_d}}
 =\csc y \int_0^y F_{s_1}(\sin t\tan t\,dt-dt\, d(\sec t) ) \circ  \prod_{j=2}^{d}{}\hskip-2.35ex{\scriptstyle \circ}\hskip1ex
 \gl_{l_j,s_j}  \circ \ola{a_n(\sin t)}.
\end{align*}
Thus integrating over $(0,\pi/2)$ and dividing by $\pi/2$ we get
\begin{align}\label{equ-binnSquare-gb}
\sum
 \frac{a_{n_1}^2}{l_1(n_1)^{s_1}\cdots l_d(n_d)^{s_d}}
 =  \frac{2}{\pi} \int_0^{\pi/2}\csc t\,dt\, F_{s_1}(\sin t\tan t\,dt-dt\, d(\sec t) ) \prod_{j=2}^{d}{}\hskip-2.35ex{\scriptstyle \circ}\hskip1ex
 \gl_{l_j,s_j}  \circ \ola{a_n(\sin t)}.
\end{align}
The claim follows immediately since there are odd number of $dt$'s in this case, as shown in
the proof of Thm.~\ref{thm-Mixed-Parity-all}\ref{enu:thm-Mixed-Parity-2n2n+1}.

\medskip
(ii) If $l_1(n)=2n$ then we see that
\begin{align*}
 \sum
 \frac{a_{n_1}(\sin y)}{l_1(n_1)^{s_1}\cdots l_d(n_d)^{s_d}}
 = \int_0^y F_{s_1} (\tan t\,dt-\csc t \,dt\, d(\sec t) )\circ  \prod_{j=2}^{d}{}\hskip-2.35ex{\scriptstyle \circ}\hskip1ex
 \gl_{l_j,s_j} \circ \ola{a_n(\sin t)}.
\end{align*}
Thus integrating over $(0,\pi/2)$ and dividing by $\pi/2$ we get
\begin{align}\label{equ-binnSquare-ga}
 \sum
 \frac{a_{n_1}^2}{l_1(n_1)^{s_1}\cdots l_d(n_d)^{s_d}}
 =\frac{2}{\pi} \int_0^{\pi/2} dt\, F_{s_1} (\tan t\,dt-\csc t \,dt\, d(\sec t) )\circ \prod_{j=2}^{d}{}\hskip-2.35ex{\scriptstyle \circ}\hskip1ex
 \gl_{l_j,s_j} \circ \ola{a_n(\sin t)}.
\end{align}
The corollary holds as well in this case as the number of $dt$'s is changed to odd because of the leading $dt$,
since originally, as shown in
the proof of Thm.~\ref{thm-Mixed-Parity-all}\ref{enu:thm-Mixed-Parity-2n2n+1}, the number of $dt$'s was even.
The appearance of $\iota(l)$ is due to the special behavior of $\ga$-block
as manifested by Example~\ref{eg-binn-sqr-gam-ga-block} and Example~\ref{eg-binn-sqr-gam-ga-block2} in Appendix B.

\medskip
(iii)  If $l_1(n)=2n-1$ then there are two cases. By \eqref{equ-Mixed-Parity-Trig} if $s_1=1$,
then we have
\begin{align*}
 \sum
 \frac{a_{n_1}(\sin y)}{l_1(n_1)^{s_1}\cdots l_d(n_d)^{s_d}}
 = \cos y\int_0^y d(\sec t)\circ\prod_{j=2}^{d}{}\hskip-2.35ex{\scriptstyle \circ}\hskip1ex
 \gl_{l_j,s_j}  \circ \ola{a_n(\sin t)}.
\end{align*}
Thus integrating over $(0,\pi/2)$ and dividing by $\pi/2$ we get
\begin{align*}
 \sum
 \frac{a_{n_1}^2}{l_1(n_1)^{s_1}\cdots l_d(n_d)^{s_d}}
=&\, \frac{2}{\pi} \int_0^{\pi/2} \cos t\,dt\, d(\sec t)\circ \prod_{j=2}^{d}{}\hskip-2.35ex{\scriptstyle \circ}\hskip1ex
 \gl_{l_j,s_j}  \circ \ola{a_n(\sin t)}.
\end{align*}
If $s_1\ge2$, then
\begin{align}\label{equ-binnSquare-gam1}
\sum
 \frac{a_{n_1}(\sin y)}{l_1(n_1)^{s_1}\cdots l_d(n_d)^{s_d}}
 = \sin y\int_0^y F_{s-1} \frac{dt}{\tan^2 t} d(\sec t)\circ \prod_{j=2}^{d}{}\hskip-2.35ex{\scriptstyle \circ}\hskip1ex
 \gl_{l_j,s_j} \circ \ola{a_n(\sin t)}.
\end{align}
Thus integrating over $(0,\pi/2)$ and dividing by $\pi/2$ we get
\begin{align}\label{equ-binnSquare-gam-depthd}
\sum
 \frac{a_{n_1}^2}{l_1(n_1)^{s_1}\cdots l_d(n_d)^{s_d}}
=\frac{2}{\pi} \int_0^{\pi/2} \sin t\,dt\, F_{s-1} \frac{dt}{\tan^2 t} d(\sec t)\circ \prod_{j=2}^{d}{}\hskip-2.35ex{\scriptstyle \circ}\hskip1ex
 \gl_{l_j,s_j}  \circ \ola{a_n(\sin t)}.
\end{align}

To complete the proof of the theorem we only need to pay attention to the weight increasing in (a) and weight
drop phenomenon in (b) and (c) associated with
a leading $\gam$-block. The first phenomenon in (a) is obvious from (i) and (ii) above. By (iii) it is also
easy to see the weight cannot increase. To show that in fact the may drop by 1
one can carry out a case by case study using the examples given in the next section
and the Appendix. We leave the detail to the interested reader.
\end{proof}

\section{A corollary and some examples}
In this last section, we first answer affirmatively a few questions we posted at the end of \cite{XuZhao2021b}.
For $\bfk\in\N^d$ and $\bfl\in\N^e$ we define
\begin{align*}
\ze_n(\bfk):=&\, \sum_{n\ge m_1>\dots>m_d>0} \frac{1}{m_1^{k_1}\cdots  m_d^{k_d}}, \\
t_n(\bfl):=&\,  \sum_{n\ge  r_1>\dots>r_e>0} \frac{1}{(2r_1-1)^{l_1}\cdots (2r_e-1)^{l_e}}.
\end{align*}

\begin{cor}\label{cor-answerQuestions}
For all $m\in\N$, $p\in\N_{\ge2}$, $q\in\N_{\ge3}$,
and all compositions of positive integers $\bfk$ and $\bfl$ (including the cases $\bfk=\emptyset$ or $\bfl=\emptyset$),
we have
\begin{alignat*}{4}
{\rm (a)}& \ \ \sum_{n=1}^\infty a_n \frac{\ze_n(\bfk)t_n(\bfl)}{n^{p}}  \in \CMZV_{|\bfk|+|\bfl|+p}^4,& \quad
{\rm (b)}& \ \ \sum_{n=1}^\infty a_n^2 \frac{\ze_n(\bfk)t_n(\bfl)}{n^{q}}  \in \frac{i}{\pi}\CMZV^{4}_{|\bfk|+|\bfl|+q+1},\\
{\rm (c)}& \ \ \sum_{n=0}^\infty a_n \frac{\ze_n(\bfk)t_n(\bfl)}{(2n+1)^{p}}  \in  i\CMZV^{4}_{|\bfk|+|\bfl|+p},& \quad
{\rm (d)}& \ \ \sum_{n=0}^\infty a_n^2 \frac{\ze_n(\bfk)t_n(\bfl)}{(2n+1)^{q}} \in \frac{i}{\pi} \CMZV^{4}_{|\bfk|+|\bfl|+q+1}.
\end{alignat*}
\end{cor}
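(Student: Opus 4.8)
\textbf{Proof proposal for Corollary~\ref{cor-answerQuestions}.}
The plan is to reduce each of the four statements to an application of Theorem~\ref{thm-Mixed-Parity-all} (for (a) and (c)) and Theorem~\ref{thm-binnSquare} (for (b) and (d)) after unfolding the finite multiple harmonic sums $\ze_n(\bfk)$ and $t_n(\bfl)$. The key observation is that both $\ze_n(\bfk)$ and $t_n(\bfl)$ are themselves iterated (finite) sums over indices bounded above by $n$: writing them out, a term $a_n \ze_n(\bfk) t_n(\bfl)/n^p$ becomes, after merging the two nested sums into one nested sum over a common tuple $n\ge m_1>\cdots$ interleaved with $n\ge r_1>\cdots$, a $\Q$-linear combination of sums of the shape
\begin{align*}
\sum_{n_1> n_2\succ\cdots\succ n_e>0} \frac{a_{n_1}}{l_1(n_1)^{s_1}\cdots l_e(n_e)^{s_e}},
\end{align*}
where $l_1(n)=n$ (for (a), (b)) or $l_1(n)=2n+1$ (for (c), (d)), while each subsequent $l_j(n)$ is either $n$ (coming from a factor of $\ze_n$) or $2n-1$ (coming from a factor of $t_n$), and the ``$\succ$'' signs are mixtures of ``$>$'' and ``$\ge$'' produced by the shuffle of the two chains of strict inequalities under the single ceiling $n$. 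The weight of the resulting composition is exactly $|\bfk|+|\bfl|+p$ (resp.\ $+q$), and the depth is $1+\dep(\bfk)+\dep(\bfl)$.

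First I would make precise the merging step: fix $n$ and expand
\begin{align*}
\ze_n(\bfk)t_n(\bfl)=\sum_{n\ge m_1>\cdots>m_d>0}\ \sum_{n\ge r_1>\cdots>r_e>0}\frac{1}{m_1^{k_1}\cdots m_d^{k_d}(2r_1-1)^{l_1}\cdots(2r_e-1)^{l_e}},
\end{align*}
and observe that the pair of chains can be interleaved into a single totally (weakly) ordered chain in finitely many ways — this is the standard ``stuffle/quasi-shuffle'' bookkeeping — each interleaving contributing a term in which some consecutive indices among the $m$'s and $r$'s are forced equal (hence ``$\ge$'') and the rest are strict. Prepending the outer index $n_1=n$ (with $l_1(n_1)=n^p$, resp.\ $(2n+1)^q$) and noting $n\ge m_1$, $n\ge r_1$ turn into $n_1> n_2$ or $n_1\ge n_2$ depending on the case, one lands precisely in the class of series covered by Theorem~\ref{thm-Mixed-Parity-all}\ref{enu:thm-Mixed-Parity-Any} and its $a_n^2$-analogue Theorem~\ref{thm-binnSquare}\ref{enu:thm-binnSquare-Strict}. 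For (a): $l_1(n)=2n$ so $\gd(l_1)=0$, $\nu(l_1)=0$, giving membership in $\CMZV^4_{|\bfk|+|\bfl|+p}$; here I should double-check that the possible weight drop in \eqref{equ:thm-Mixed-Parity-Any} is harmless because $\CMZV^4_{\le w}\subseteq\CMZV^4_w$ (lower-weight CMZVs embed into higher weight by multiplying with a power of $\zeta(2)$ or $\log 2$, or one simply notes the statement is an inclusion into the filtered space). For (c): $l_1(n)=2n+1$ gives the factor $i$, matching $i\,\CMZV^4_{|\bfk|+|\bfl|+p}$. For (b) and (d): apply Theorem~\ref{thm-binnSquare} with $s_1=q\ge3$ (the hypothesis $q\ge3$ is exactly what is needed there); in case (b) $l_1(n)=2n$ and in case (d) $l_1(n)=2n+1$, and in both cases part \ref{enu:thm-binnSquare-gen}/\ref{enu:thm-binnSquare-Strict} yields $\frac1\pi\CMZV^4_{\le|\bfk|+|\bfl|+q+1}$; the extra factor $i$ in the stated conclusion comes from tracking the parity of the number of $dt$'s, which is odd in both subcases (for (b) because of the leading $dt$ inserted by the Wallis integral as in \eqref{equ-binnSquare-ga}, for (d) because the leading $\gb^\star$-block already contributes an odd count as in \eqref{equ-binnSquare-gb}).

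The main obstacle — really the only non-routine point — is verifying that every series actually produced by the interleaving is \emph{defined} (convergent), since Theorem~\ref{thm-Mixed-Parity-all} and Theorem~\ref{thm-binnSquare} carry the proviso ``provided the series is defined''. Convergence of $\sum a_n\ze_n(\bfk)t_n(\bfl)/n^p$ is clear for $p\ge2$ because $\ze_n(\bfk),t_n(\bfl)=O(\log^{\dep}n)$ and $a_n=O(n^{-1/2})$; for $\sum a_n^2(\cdots)/n^q$ with $q\ge3$ one even has absolute convergence since $a_n^2=O(1/n)$. The same polynomial-log bounds show the $(2n+1)^{-p}$ and $(2n+1)^{-q}$ variants converge under $p\ge2$, $q\ge3$. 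It then remains only to remark — as the final Remark before this section already notes — that the proofs of Theorem~\ref{thm-Mixed-Parity-all} and Theorem~\ref{thm-binnSquare} are constructive, so the membership is effective; and to address the edge cases $\bfk=\emptyset$ and/or $\bfl=\emptyset$, where the corresponding finite sum is the empty product $1$ and the interleaving degenerates to a single chain, still inside the scope of the cited theorems. I would close by stating (a)--(d) as immediate specializations, with a one-line indication of the weight and the power of $i$ in each.
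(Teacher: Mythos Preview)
Your overall strategy—unfold $\ze_n(\bfk)t_n(\bfl)$, interleave the two chains of indices by quasi-shuffle, and feed the resulting nested sums into the structure theorems—is exactly the paper's approach. The gap is in \emph{which} part of those theorems you can invoke. By keeping the $t$-indices in the form $2r-1$, every interleaved sum carries $\gam$-blocks ($l_j(n)=2n-1$) in the interior, so you are forced into Thm.~\ref{thm-Mixed-Parity-all}\ref{enu:thm-Mixed-Parity-Any} and Thm.~\ref{thm-binnSquare}\ref{enu:thm-binnSquare-Strict}. Those only yield $\CMZV^4_{\le w}\otimes\Q[i]$, whereas the corollary asserts the \emph{exact} weight $w=|\bfk|+|\bfl|+p$ (resp.\ $+q+1$) together with a definite factor $i^{\gd(l_1)}$, not arbitrary $\Q[i]$-coefficients. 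Your patch ``$\CMZV^4_{\le w}\subseteq\CMZV^4_w$'' is not available: the paper carefully distinguishes these spaces (compare parts \ref{enu:thm-Mixed-Parity-2n2n+1} and \ref{enu:thm-Mixed-Parity-Any} of Thm.~\ref{thm-Mixed-Parity-all}), and multiplying a weight-$(w{-}1)$ value by $\log 2$ or $\zeta(2)$ changes the number, so that remark does not furnish an embedding. Likewise, the parity-of-$dt$ argument you cite for the factor $i$ in (b) and (d) is proved in the paper only for blocks of type $2n$ and $2n+1$; with $\gam$-blocks present it does not apply as stated.

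The missing move is a one-line index shift that the paper performs first: rewrite
\[
t_n(\bfl)=\sum_{n> r_1>\cdots>r_e\ge 0}\ \prod_{j}(2r_j+1)^{-l_j},
\]
so that every non-leading $l_j$ is $2n$ or $2n+1$ and no $\gam$-block ever appears. After interleaving one uses the dichotomy ``for any $m$-index and $r$-index either $m>r$ or $r\ge m$'' together with a short inclusion--exclusion (e.g.\ $\sum_{r_k>r_{k+1}}=\sum_{r_k\ge r_{k+1}}-\sum_{r_k=r_{k+1}}$, and similarly at the top) to bring each term into the precise $\succ$-pattern demanded by Thm.~\ref{thm-Mixed-Parity-all}\ref{enu:thm-Mixed-Parity-2n2n+1} and Thm.~\ref{thm-binnSquare}\ref{enu:thm-binnSquare-2n2n+1}. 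Those parts then deliver the exact weight and the correct power of $i$ directly, with no appeal to any filtration. Once you insert this shift, the rest of your outline (convergence, edge cases $\bfk=\emptyset$ or $\bfl=\emptyset$) goes through unchanged.
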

\begin{proof} Write
\begin{align*}
\ze_n(\bfk)=\sum_{n\ge m_1>\dots>m_d>0} \frac{1}{m_1^{k_1}\cdots  m_d^{k_d}},
\quad
t_n(\bfl)= \sum_{n> r_1>\dots>r_e \ge 0} \frac{1}{(2r_1+1)^{l_1}\cdots (2r_e+1)^{l_e}}.
\end{align*}
We only need to note
the following facts: (i) for any summation index $m$ for $\ze_n(\bfk)$ and summation index $r$ for $t_n(\bfl)$ there
are only two possibilities: $m>r$ or $r\ge m$; (ii) we can re-write
\begin{align*}
    \sum_{n>r_1} \frac{1}{(2n+1)^q (2r_1+1)^{l_1}}= \sum_{n\ge r_1} \frac{1}{(2n+1)^q (2r_1+1)^{l_1}}-\frac{1}{(2n+1)^{q+l_1}}
\end{align*}
and obtain similar identities when $n$ and $r_1$ are replaced by $r_{j}$ and $r_{j+1}$. Therefore, we see that
(a) and (c) are special cases of Thm.~\ref{thm-Mixed-Parity-all}\ref{enu:thm-Mixed-Parity-2n2n+1}.
(b) and (d) are special cases of Thm.~\ref{thm-binnSquare}\ref{enu:thm-binnSquare-2n2n+1}.
\end{proof}

In the following examples we first convert the Ap\'ery-type series to CMZVs of level 4
by Thm~\ref{thm-Mixed-Parity-all} and
then use Au's Mathematica package \cite{Au2020} to simplify the expressions.

\begin{ex}\label{eg-gam-ga-block}
We now consider series given by a $\gam$-$\ga$-block chain. By Thm.~\ref{thm-LeadGammaBlock} we have
\begin{align*}
\Xi(2n-1,2n;1,1)=\Xi(2n;1)=\log 2 \quad\text{and} \quad \Xi(2n-1,2n;1,b)=\Xi(2n;b), \ \forall b\in\N.
\end{align*}
Next,  for any $s\ge 2$ by \eqref{equ-gam-depthd} and \eqref{endga-block-Depth1} we get
\begin{align*}
\Xi(2n-1,2n;s,1)=&\,\sum_{n_1>n_2>0} \frac{a_{n_1}}{(2n_1-1)^s(2n_2)}
=\int_0^{\pi/2} F_{s-1} \frac{dt}{\tan^2 t} \tan t \sec t \, dt (\csc t-\cot t)\,dt\\
=&\,(-1)^s \int_0^1 (2\tx_{-1}-\tx_i-\tx_{-i})- (-1)^s \sum_{j=0}^{s-2} i\int_0^1 (\ta+2\tx_{-1})\td_{-i,i}\ty^j,
\end{align*}
after applying suitable change of variables, see \eqref{equ-1forms-ta-tx}.
Note the highest CMZV weight drops by 1 as predicted by Thm.~\ref{thm-Mixed-Parity-all}\ref{enu:thm-Mixed-Parity-gen}.
For example,
\begin{align*}
&\,\Xi(2n-1,2n;2,1)=  2 G-\frac12\pi\log 2-\log 2 \approx 0.04999096264,\\
% 2 Catalan - 1/2 (2 + \[Pi]) Log[2]  \approx 0.0499909626426916554416304592878
&\,\Xi(2n-1,2n;3,1)= \frac{3\pi^3}{32} -4\Im\Li_3\Big(\frac{1+i}2\Big) -\frac{\pi\log 2}{8} \Big(3\log2 -4\Big)+\log 2-2G\approx 0.010517475685,
%-2 Catalan + (3 \[Pi]^3)/32 - 4 Im[PolyLog[3, 1/2 + I/2]] + Log[2] - 1/8 \[Pi] Log[2] (-4 + Log[8])  \approx 0.0105174756851544690832391376069
\end{align*}
where $G=\sum_{k\ge 0}\frac{(-1)^k}{(2k+1)^2}$ is Catalan's constant.
Next, for all $s,b\ge 2$ we have
\begin{align*}
\Xi(2n-1,2n;s,b)=&\,\sum_{n_1>n_2>0} \frac{a_{n_1}}{(2n_1-1)^s(2n_2)^b}
=\int_0^{\pi/2} F_{s-1} \frac{dt}{\tan^2 t} \tan t \sec t \, dt \, F_b (\csc t-\cot t)\,dt\\
=&\,-(-1)^{s+b} \int_0^1 (2\tx_{-1}-\tx_i-\tx_{-i}) \bigg(\ty^{b-1}+ i \sum_{j=0}^{s-2}\ty^{b-2}\td_{-1,1} \td_{-i,i} \ty^j \bigg).
\end{align*}
\end{ex}

\begin{ex}\label{eg-gam-gb-block}
We now consider series given by a $\gam$-$\gb^\star$-block chain. By Thm.~\ref{thm-LeadGammaBlock} we have
\begin{align*}
\Xi(2n-1,2n+1;1,1)= \sum_{n_1>n_2\ge 0} \frac{a_{n_1}}{(2n_1-1)(2n_2+1)} = \Xi(2n+1;1)= \frac{\pi}{2}.
\end{align*}
For any $s\ge 2$ by \eqref{equ-gam-depthd} and \eqref{endgb-block-Depth1} we get (see Example \ref{eg-gam-gb-block-App})
\begin{align*}
\Xi(2n-1,2n+1;s,1)=&\, i\int_0^1 \sum_{j=2}^s (-1)^{s-j}\Big(F_j \,dt + F_{j-1} \,dt \tan t \,dt\Big)-(-1)^{s}\frac{\pi}{2}\\
=&\,  (-1)^s \bigg(i\int_0^1\sum_{j=2}^s\Big(\td_{-i,i} \ty^{j-1}+\tz\td_{-i,i}\ty^{j-2}\Big)-\frac{\pi}{2}\bigg).
\end{align*}
Hence the highest CMZV weight again drops by 1 as predicted by Thm.~\ref{thm-Mixed-Parity-all}\ref{enu:thm-Mixed-Parity-gen}.
Further, for all $s,b\ge 2$ we see that
\begin{align*}
\Xi(2n-1,2n+1;s,b)=&\,\sum_{n_1>n_2\ge 0} \frac{a_{n_1}}{(2n_1-1)^s(2n_2+1)^b}\\
=&\,-i(-1)^{s+b}\int_0^1 \td_{-i,i}\ty^{b-2}  \ty+i (-1)^{s+b}  \sum_{j=0}^{s-2} \int_0^1
    \td_{-i,i} \Big(\ty^{b-2} \td_{-i,i}^2 \ty^j- \ty^{j+b} \Big). % \\ &\, \in\CMZV_{\le s+b-1}^4
\end{align*}
\end{ex}

Next, we present two examples illustrating the ideas in Thm.~\ref{thm-binnSquare}.

\begin{ex} \label{gam-depthd}
We consider series given by a $\gam^{\circ d}$-block chain with leading weight two and trailing weight one blocks.
By  \eqref{equ-gam-d=1} and the proof of Thm.~\ref{thm-binnSquare} (see Example \ref{gam-depthd-App})
\begin{align*}
S_d:=&\, \sum_{n_1>n_2>\cdots>n_d> 0} \frac{a_{n_1}^2}{(2n_1-1)^2(2n_2-1)\cdots(2n_d-1)}\\
=&\, \frac{2}{\pi} \bigg(d+1-\frac{\pi}{2}
+\sum_{j=0}^{d-2} (-1)^{j}(d-j) \int_0^1  \tz^j (\tx_{-i}+\tx_{i}) \bigg).
\end{align*}
This shows the weight may drop by two in this special case.
\end{ex}

\begin{ex} \label{eg-binn-sqr-gam-ga-block}
We consider series given by a $\gam$-$\ga$-block chain with $\ga$-block having weight one.
When $\gam$-block has weight one there is no weight drop from the last example.
But with higher weight $\gam$-block, the weight drop pattern resumes:
\begin{align*}
\sum_{n_1>n_2> 0} \frac{a_{n_1}^2}{(2n_1-1)^2(2n_2)} =\frac{2}{\pi}\Big(2G-\pi\log2 +\pi - 4\log 2\Big) \approx 0.01486445.
\end{align*}
\end{ex}

We have given more details of the above examples and computed many more examples
in the Appendix. The interested reader may check
and see more subtle patterns contained in these examples.

We now turn to some identities we found in the literature.
For $n,k\in\N$, it is conventional to define the harmonic numbers and generalized harmonic numbers by
\begin{align*}
  H_n:=\ze_n(1)=1+\frac12+\frac13+\cdots+\frac1n \quad\text{and} \quad H_n^{(k)}:=\ze_n(k)=1+\frac1{2^k}+\frac13+\cdots+\frac1{n^k}
\end{align*}
respectively. From the examples above and those contained in the Appendix, we can derive immediately the following identity
which also appeared in \cite[Thm.~2.7]{Campbell2019} and \cite{CampbellSofo2017}:
\begin{align*}
 \sum_{n>0} \frac{a_n^2 H_{2n}}{(2n-1)^2}
=&\,\sum_{n>0} \frac{a_n^2}{(2n-1)^3}+\sum_{n>m>0} \frac{a_n^2}{(2n-1)^2(2m-1)}+\sum_{n>m>0} \frac{a_n^2}{(2n-1)^2(2m)}\\
&\, +\sum_{n>0} \frac{a_n^2}{2n-1)^2}-\sum_{n>0} \frac{a_n^2}{(2n-1)}+\sum_{n>0} \frac{a_n^2}{(2n)}\\
=&\, \frac{2}{\pi}\bigg(\Big(\frac{\pi}{2}+2G-3\Big)  + \Big( 3-\frac{\pi}2-2\log2\Big)   +   \Big(2G-\pi\log2 +\pi-4\log 2\Big)   \\
&\,  +     \Big( 2-\frac{\pi}2\Big)   -  \Big(\frac{\pi}{2}-1\Big) +   \Big( \pi\log2-2G \Big)\bigg)\\
=&\, \frac{4G-12\log 2+6}{\pi}.
\end{align*}
Similarly, we can also verify:
\begin{alignat*}{4}
 \sum_{n>0} \frac{a_n^2 H_{2n}}{2n-1}
%  =&\,\sum_{n>0} \frac{a_n^2}{(2n-1)^2}+\sum_{n>m>0} \frac{a_n^2}{(2n-1)(2m-1)}+\sum_{n>m>0} \frac{a_n^2}{(2n-1)(2m)}\\
%  &\, +\sum_{n>0} \frac{a_n^2}{2n-1}-\sum_{n>0} \frac{a_n^2}{2n}
=&\, \frac{2}{\pi}(3\log 2-1)     &&   (\text{\cite[Thm.~2.5]{Campbell2019} and \cite[Thm.~5.15]{CampbellDS2019}}),\\
\sum_{n>0} \frac{a_n^2 H_n}{2n-1}
%=&\, 2\left(\sum_{n>>m>0} \frac{a_n^2}{(2n-1)(2m)}+\sum_{n>0} \frac{a_n^2}{(2n-1)}-\sum_{n>0} \frac{a_n^2}{(2n)} \right)\\
%=&\,  \frac{4}{\pi}\left( \pi\log2  + 2\log 2-2G-\frac{\pi}{2}   +\frac{\pi}{2}-1   -\pi\log2+2G   \right)\\
=&\, \frac{8\log 2-4}{\pi}   &&  (\text{\cite[Thm. 1]{Campbell2018}}),\\
 \sum_{n>0} \frac{a_n^2 H_n}{(2n-1)^2}
%=&\, 2\left(\sum_{n>>m>0} \frac{a_n^2}{(2n-1)^2(2m)}+\sum_{n>0} \frac{a_n^2}{(2n-1)^2}-\sum_{n>0} \frac{a_n^2}{(2n-1)}+\sum_{n>0} \frac{a_n^2}{(2n)} \right)\\
%=&\,  \frac{4}{\pi}\left( 2G-\pi\log2 +\pi-4\log 2     +     2-\frac{\pi}2       - \frac{\pi}{2}+1 +   \pi\log2-2G    \right)\\
=&\, \frac{12-16\log 2}{\pi}  &&  (\text{\cite[Thm. 2]{Campbell2018} and \cite[p.~10]{Chen2016}}),\\
 \sum_{n>0} \frac{a_n^2 (H_n^2+H_n^{(2)})}{2n-1}
=&\, \frac{4\pi}{3}-\frac{32\log^2 2-32\log 2+16}{\pi} \qquad  &&  (\text{\cite[Thm. 4]{Campbell2018} }).
\end{alignat*}
For the last equation, we may use the stuffle relation
\begin{align}\label{equ-Hstuffle}
 H_n^2=2\zeta_n(1,1)+H_n^{(2)}.
\end{align}

As a further application we can derive \cite[Thm.~5.12]{CampbellDS2019} as follows.
Noting that $(2n-1)a_{n-1}=2na_n$ for all $n\in \N$, we get by shifting index $n\to n-1$
\begin{align*}
\sum_{n>0} \frac{a_{n}^2}{(n+1)}H_n^{(2)}
= \sum_{n>k> 0} \frac{4na_{n}^2}{(2n-1)^2 k^2}
=&\,\sum_{n>k> 0} \frac{2a_{n}^2}{(2n-1)k^2}+\sum_{n>k> 0} \frac{2a_{n}^2}{(2n-1)^2 k^2} \\
=&\, 8V_1+8W_2=\frac{2}{\pi}\Big(16G+\frac{\pi^2}{3}-8\pi\log 2\Big),
\end{align*}
where $V_1$ and $W_2$ are given by Examples \ref{eg-V1} and \ref{eg-W1}, respectively.
Similarly, using the stuffle relation \eqref{equ-Hstuffle} and the identity
\begin{align*}
\sum_{n>0} \frac{a_{n}^2}{(n+1)}\zeta_n(1,1)
=\sum_{n>k>m> 0} &\, \frac{4na_{n}^2}{(2n-1)^2 km}
=\sum_{n>k>m> 0} \frac{2a_{n}^2}{(2n-1)km}+\sum_{n>k>m> 0} \frac{2a_{n}^2}{(2n-1)^2 km}\\
=&\, 8Y_1+8Y_2=\frac{2}{\pi}\Big(16\pi\log^2 2-16G- \pi^2+8\pi\log 2 \Big),
\end{align*}
where $Y_1$ and  $Y_2$ are given by Example \ref{eg-Y1},
we can confirm \cite[Thm.~5.12]{CampbellDS2019} immediately.

\section{Conclusion}
In this paper, by using iterated integrals we have demonstrated that the Ap\'ery-type series
\begin{align*}
\sum_{n_1>\cdots > n_d>0} \frac{\binom{2n_1}{n_1}}{4^{n_1}(2n_1)^{s_1}\cdots (2n_d)^{s_d}}
\quad\text{and}\quad
\sum_{n_1>\cdots > n_d>0} \frac{{\binom{2n_1}{n_1}}^2}{4^{2n_1}(2n_1)^{s_1}\cdots (2n_d)^{s_d}}
\end{align*}
can be expressed as $\Q$-linear combinations of the real and/or imaginary parts of the
colored multiple zeta values of level 4, with an extra factor of $1/\pi$ for the squared version.
The same claim still holds if some or all the indices $2n_j$ are replaced by $2n_j\pm1$
and ``$>$'' replaced by ``$\ge$'' as long as the series converge.

From numerical evidence, it seems
that similar results can be obtained if $4^{n_1}$ is replaced by $8^{n_1}$, $12^{n_1}$ or $16^{n_1}$,
however, the level must be increased significantly. Currently, our method can only
show that these series can be expressed in terms of multiple polylogarithms at suitable algebraic points.
In our next paper \cite{XuZhao2022c}, we will study the alternating versions of the series treated
in this paper and those series in which the binomial coefficients appear on the denominators.

\bigskip
\noindent
{\bf Acknowledgement.} C. Xu is supported by the National Natural Science Foundation of China 12101008, the Natural Science Foundation of Anhui Province 2108085QA01, and the University Natural Science Research Project of Anhui Province KJ2020A0057. J. Zhao is supported by the Jacobs Prize from The Bishop's School.

\appendix

%\addtocounter{section}{-7}
%\renewcommand\thesection{A\arabic{section}}
%\renewcommand\theequation{A\thesection.\arabic{equation}}

\bigskip

\begin{center}
\huge \textbf{Appendix. Further Examples}
\end{center}

\bigskip
In this appendix,
we provide more details for our computation of the examples in the main text and present many more other examples.

\section{Examples of Ap\'ery-type series involving central binomial coefficients}
In the following examples we first convert the Ap\'ery-type series to CMZVs of level 4
by Thm~\ref{thm-Mixed-Parity-all}
and then use Au's package to simplify the expressions. We also note
that under the change of variables
\begin{align}\label{equ-changVarSeq}
t\to \sin^{-1} t \quad\text{then}\quad t\to \frac{1-t^2}{1+t^2}
\end{align}
we have
\begin{alignat}{6}
\cot t\,dt \to &\, \om_{0}:=\frac{dt}{t}\to \ty,\quad &
 \csc t \,dt \to &\, \om_3:= \frac{dt}{t\sqrt{1-t^2}}\to \td_{-1,1}, \label{1-formChangeVar1} \\
dt \to &\, \om_1:=\frac{dt}{\sqrt{1-t^2}} \to i \td_{-i,i}, \quad &
\sec t\csc t \, dt \to &\, \om_{20}:= \frac{dt}{t(1-t^2)}\to \ty+\tz, \label{1-formChangeVar2} \\
\tan t\,dt \to &\, \om_{2}:= \frac{t\,dt}{1-t^2} \to \tz, \quad &
\sec t\,dt \to &\, \om_8:= \frac{dt}{1-t^2} \to -\ta. \label{1-formChangeVar3}
\end{alignat}

\begin{ex} \label{eg-single-ga-gb}
For a single $\ga$-block or $\gb^\star$-block, for any $s\in\N$ we have  by
 \eqref{endga-block} and \eqref{endgb-block})
\begin{align*}
\Xi(2n;s)=&\, \sum_{n\ge 0} \frac{a_{n}}{(2n)^s}
=\int_0^{\pi/2} F_s (\csc t-\cot t)\,dt
=(-1)^s \int_0^1 (2\tx_{-1}-\tx_{-i}-\tx_{i})\ty^{s-1},\\
\Xi(2n+1;s)=&\,\sum_{n\ge 0} \frac{a_{n}}{(2n+1)^s}
=\int_0^{\pi/2} F_s\,dt
=(-1)^s i\int_0^1  \td_{-i,i}\ty^{s-1}   % s=2  \approx 0.0605084.
\end{align*}
by \eqref{1-formChangeVar1}--\eqref{1-formChangeVar3}.
Here we point out that after change of variable \eqref{equ-changVarSeq}
we need to multiply $(-1)^w$
when reversing the interval back to $[0,1]$, where $w$ is the weight of the value,
i.e., the number of 1-forms in the iterated integral. In particular, when $s\le 3$ we have
\begin{alignat}{5}\label{equ-Xi(2n;1)}
&\Xi(2n;1) =\log 2, \quad  &&\Xi(2n;2) =\frac{\pi^2-12\log^2 2}{24},  \quad && \Xi(2n;3) =\frac{4\log^3 2-\pi^2\log 2+6\ze(3)}{24},\\
% Log[2]\approx 0.693147180559945309417232121458;   1/24 (\[Pi]^2 - 12 Log[2]^2) \approx 0.171007009752955896784552528498 ;
% 1/24 (-\[Pi]^2 Log[2] + 4 Log[2]^3 + 6 Zeta[3])\approx 0.0709729817940069619344318393984
&\Xi(2n+1;1) =\frac{\pi}{2}, \quad  &&\Xi(2n+1;2) =\frac{\pi\log 2}{2},   \quad  &&\Xi(2n+1;3) =\frac{\pi}{48}\Big(\pi^2 + 12\log^2 2\Big). \label{equ-Xi(2n+1;1)}
% \[Pi]^2 Log[2]/2 \approx 1.08879304515180106525034444913; \[Pi]/48*(\[Pi]^2+ 12 Log[2]^2)
\end{alignat}
\end{ex}

\begin{ex}
For a double $\gb^\star$-block, for any $s\in\N$ we have by \eqref{midgbgb-block} and \eqref{endgb-block}
 \begin{align*}
\Xi(2n+1,2n+1;s,1)=&\,\sum_{n_1\ge n_2\ge 0} \frac{a_{n_1}}{(2n_1+1)^s(2n_2+1)}\\
=&\,\int_0^{\pi/2} F_s(\cot t\,dt \,dt+dt\,\tan t\,dt)
= (-1)^{s-1} i\int_0^1  (\td_{-i,i}\ty+\tz\td_{-i,i})\ty^{s-1} .  % s=2  \approx 0.0605084.
\end{align*}
When $s\le 3$ we have
\begin{align*}
\Xi(2n+1,2n+1;1,1) =&\, \pi\log 2, \qquad \Xi(2n+1,2n+1;2,1)=\frac{3\pi\log^2 2}{4},   \\
\Xi(2n+1,2n+1;3,1)=&\, \frac{\pi}{48}\Big(\pi^2 \log 2 + 16\log^3 2+ 3\zeta(3)\Big).
% \[Pi] Log[2] \approx 2.1775860903036021305; (3*\[Pi]*Log[2] )/4\approx 1.1320407441903722083
% 1/48 \[Pi] (\[Pi]^2 Log[2] + 16 Log[2]^3 + 3 Zeta[3]) \approx 1.03251411399050274877593055897
\end{align*}
\end{ex}

\begin{ex} We now consider series given by a $\gb^\star$-$\ga$-block chain. For any $s\in\N$ we have
 \begin{align*}
\Xi(2n+1,2n;s,1)=&\,\sum_{n_1\ge n_2> 0} \frac{a_{n_1}}{(2n_1+1)^s (2n_2)}
=\int_0^{\pi/2} F_s\,dt\circ\sec t\sum_{n_2> 0}\frac{a_{n_2}(\sin t)}{2n_2}\quad(\text{by }\eqref{1stgbga-block})\\
=&\,\int_0^{\pi/2} F_s dt (\sec t\csc t \,dt-\csc t \,dt)
= (-1)^s i\int_0^1 (\ta+2\tx_{-1})\td_{-i,i}\ty^{s-1}.   % s=2  \approx 0.0605084.
\end{align*}
by  \eqref{endga-block-Depth1}.
When $s=1$ we have
\begin{align*}
\Xi(2n+1,2n;1,1) =&2 \Im(2\Li_{1,1}(-1,-i)-\Li_2(i) )=2G - \frac12 \pi\log 2 \approx 0.7431381432,\\
%2 Catalan - 1/2 \[Pi] Log[2]   \approx 0.743138143202636964858862580746
\Xi(2n+1,2n;2,1) =&\frac{3\pi^3}{32}-4\Im\Li_3\Big(\frac{1+i}2\Big)-\frac{3\pi\log^22}8 \approx 0.0605084383,
%(3 \[Pi]^3)/32 - 4 Im[PolyLog[3, 1/2 + I/2]] - 3/8 \[Pi] Log[2]^2  \approx 0.0605084383278461245248695968946
\end{align*}
and by \eqref{equ-Xi(2n;1)} and   \eqref{equ-Xi(2n+1;1)}
\begin{align*}
 \sum_{n_1> n_2> 0} \frac{a_{n_1}}{(2n_1+1) (2n_2)}
=&\,\Xi(2n+1,2n;1,1)-\sum_{n>0} \frac{a_{n}}{(2n+1) (2n)}\\
=&\, \Xi(2n+1,2n;1,1)-\log 2+\frac{\pi}{2}-1\\
=&\,2G - \frac12\pi\log 2-\log 2 + \frac12\pi-1 \approx 0.6207872894.
%2 Catalan - 1/2 \[Pi] Log[2] - Log[2] +\[Pi]/2-1  \approx 0.620787289437588274672952150928
\end{align*}
\end{ex}

Let $H_n$ be the $n$th harmonic number. By combining the examples from above we see that
\begin{align*}
\sum_{n\ge 0} \frac{a_{n}H_{2n}}{2n+1}
=\Xi(2n+1,2n;1,1)+\Xi(2n+1,2n+1;1,1)- \Xi(2n+1;2)=2G
\end{align*}
which is consistent with a formula on \cite[p.~10]{Chen2016}. Similarly,
all the four formulas on the top of \cite[p.~10]{Chen2016} can be verified.

\begin{ex} \label{eg-Xi(2n,2n+1)}
We now consider series given by a $\ga$-$\gb^\star$-block chain.
By \eqref{midgaga-block}
and  \eqref{1stgagb-block}
we have
\begin{align*}
\Xi(2n,2n+1;s,1)=&\,\sum_{n_1>n_2\ge 0} \frac{a_{n_1}}{(2n_1)^s(2n_2+1)} =\int_0^{\pi/2} F_s \csc t\,dt \tan t \,dt
=  (-1)^{s+1}\int_0^1 \tz\td_{-1,1} \ty^{s-1}
\end{align*}
by \eqref{1-formChangeVar1}--\eqref{1-formChangeVar3}.
So
\begin{align*}
\Xi(2n,2n+1;1,1)=&\, \int_0^1 \tz\td_{-1,1}=\frac{\pi^2}{8} \approx 1.2337005, \\
% 1.23370055013616982735431137499
\Xi(2n,2n+1;2,1)=&\, -\int_0^1 \tz\td_{-1,1} \ty =\frac18 (7 \zeta(3)-\pi^2\log 2) \approx 0.196663732.
% -1/8 (\[Pi]^2 Log[2] + 7 Zeta[3])    \approx 0.196663732282505431618835997657
\end{align*}
\end{ex}

The next two examples show that, as predicted by Thm.~\ref{thm-Mixed-Parity-all},
if $\gam$-block does not appear then all the CMZVs should have the same weight as that of the Ap\'ery-type series.

\begin{ex}
For a $\gb^\star$-$\ga$-$\ga$-block chain,
by  \eqref{1stgbga-block}, \eqref{midgaga-block} and \eqref{endga-block-Depth1},
we have
\begin{align*}
&\, \Xi(2n+1,2n,2n;s,1,1) =\sum_{n_1\ge n_2> n_3>0} \frac{a_{n_1}}{(2n_1+1)^s (2n_2)(2n_3)}\\
=&\int_0^{\pi/2} F_s\,dt (\sec t\csc t \,dt )\big((\sec t\csc t-\csc t) \,dt \big)
-\int_0^{\pi/2}  F_s\, (\csc t \,dt) \big((\csc t-\cot t)\,dt\big)\\
=&\, i(-1)^s \int_0^1 (\ta+2\tx_{-1})  (\ta+\tx_{-1}+\tx_{1}) \td_{-i,i}\ty^{s-1}
-i(-1)^s \int_0^1(2\tx_{-1}-\tx_{-i}-\tx_{i}) \td_{-1,1} \td_{-i,i}\ty^{s-1}
\end{align*}
by \eqref{1-formChangeVar1}--\eqref{1-formChangeVar3}. Thus
 \begin{align*}
 \Xi(2n+1,2n,2n;1,1,1)  =&\, -i\int_0^1 \Big[(\ta+2\tx_{-1}) (\ta+\tx_{-1}+\tx_{1}) -(2\tx_{-1}-\tx_{-i}-\tx_{i}) \td_{-1,1} \Big]\td_{-i,i}\\
=&\, 2 G\log 2-\frac{\pi^3}6 + 8 \Im\Li_3\Big(\frac{1+i}2\Big)  \approx 0.6627044147, \\
% -(\[Pi]^3/6) + 8 Im[PolyLog[3, 1/2 + I/2]] + Catalan Log[4]  \approx 0.662704414847890633467981038306
\Xi(2n+1,2n,2n;2,1,1)  =&\, i\int_0^1 \Big[(\ta+2\tx_{-1}) (\ta+\tx_{-1}+\tx_{1}) -(2\tx_{-1}-\tx_{-i}-\tx_{i}) \td_{-1,1} \Big] \td_{-i,i}\ty\\
=&\, 26 \gb(4) - \frac{25}{96} \pi^3\log 2 -32\Im\Li_4\Big(\frac{1+i}2\Big) - 4\Im\Li_3\Big(\frac{1+i}2\Big)\log 2  \\
&\,  + \pi \Big(\frac5{24}\log^3 2 - \frac7{16} \zeta(3)\Big) \approx  0.012200331,
% 26 DirichletL[4, 2, 4] - 32 Im[PolyLog[4, 1/2 + I/2]] -  25/96 \[Pi]^3 Log[2] + 5/24 \[Pi] Log[2]^3 -
% Im[PolyLog[3, 1/2 + I/2]] Log[16] - 7/16 \[Pi] Zeta[3] \approx
\end{align*}
where $\beta$ is the Dirichlet beta function
\begin{align}\label{DirichletBeta}
 \gb(s)=\sum_{k\ge 0}\frac{(-1)^k}{(2k+1)^s}.
\end{align}
\end{ex}

\begin{ex}
For a $\gb^\star$-$\ga$-$\gb^\star$-block chain, by  \eqref{1stgbga-block}, \eqref{midgagb-block} and \eqref{endgb-block-Depth1},
we have
 \begin{align*}
&\Xi(2n+1,2n,2n+1;s,1,1) =\,\sum_{n_1\ge n_2> n_3\ge 0} \frac{a_{n_1}}{(2n_1+1)^s (2n_2)(2n_3+1)}\\
=&\int_0^{\pi/2} F_s\,dt (\sec t\csc t\, dt) (\tan t \,dt)
= i(-1)^s \int_0^1 \tz (\ty+\tz) \td_{-i,i}\ty^{s-1}
\end{align*}
by \eqref{1-formChangeVar1}--\eqref{1-formChangeVar3}.
Thus we have
 \begin{align*}
&\,\Xi(2n+1,2n,2n+1;1,1,1)= -i \int_0^1 \tz (\ty+\tz) \td_{-i,i}  =\frac{\pi^3}{24}    \approx 1.29192819, \\
%%   \[Pi]^3/24  \approx 1.29192819501249250731151312838
&\,\Xi(2n+1,2n,2n+1;2,1,1)= i \int_0^1 \tz (\ty+\tz) \td_{-i,i}\ty = \frac1{24}\pi^3\log 2-\frac7{32}\pi\zeta(3) \approx 0.0694147623, \\
%%  1/96  (\[Pi]^3 Log[16] - 21 \[Pi] Zeta[3]) \approx 0.0694147623231348637894606944334
&\,\Xi(2n+1,2n,2n+1;3,1,1)= -i \int_0^1 \tz (\ty+\tz) \td_{-i,i}\ty^2 \\
=&\, \frac{\pi}{11520}\bigg(73\pi^4 + 480\pi^2 \log^2 2 -
   120\Big (2\log^4 2 + 48 \Li_4\Big(\frac12\Big) + 63\log 2 \zeta(3) \Big)\bigg) \approx 0.0141472578.
%% (73 \[Pi]^5 + 480 \[Pi]^3 Log[2]^2 -120 \[Pi] (2 Log[2]^4 + 48 PolyLog[4, 1/2] + 63 Log[2] Zeta[3])) /11520   \approx 0.0141472577744497561419311127566
\end{align*}
\end{ex}

\begin{ex}\label{eg-gam-ga-block-App}
We now consider series given by a $\gam$-$\ga$-block chain. By Thm.~\ref{thm-LeadGammaBlock}
we have
\begin{align*}
\Xi(2n-1,2n;1,1)=\Xi(2n;1)=\log 2 \quad\text{and} \quad \Xi(2n-1,2n;1,b)=\Xi(2n;b) \ \forall b\in\N.
\end{align*}
Next,  for any $s\ge 2$ by \eqref{equ-gam-depthd} and \eqref{endga-block-Depth1}
we get
\begin{align*}
\Xi(2n-1,2n;s,1)=&\,\sum_{n_1>n_2>0} \frac{a_{n_1}}{(2n_1-1)^s(2n_2)}
=\int_0^{\pi/2} F_{s-1} \frac{dt}{\tan^2 t} \tan t \sec t \, dt (\csc t-\cot t)\,dt\\
=&\,-\Xi(2n-1,2n;s-1,1)+\int_0^{\pi/2} F_{s-1}\,  ( \sec t \, dt -dt \,d(\sec t) \,(\csc t-\cot t)\,dt\\
=&\,-\Xi(2n-1,2n;s-1,1)+\int_0^{\pi/2} F_{s-1}\,  dt  \,(\sec t \csc t- \csc t)\,dt   \\
=&\,-\Xi(2n-1,2n;s-1,1)-(-1)^s i\int_0^1 (\ta+2\tx_{-1})\td_{-i,i}\ty^{s-2} \\
=&\,(-1)^s \Xi(2n-1,2n;2,1)-(-1)^s \sum_{j=1}^{s-2}  i\int_0^1 (\ta+2\tx_{-1})\td_{-i,i}\ty^j\\
=&\,(-1)^s \int_0^1 (2\tx_{-1}-\tx_i-\tx_{-i})- (-1)^s \sum_{j=0}^{s-2} i\int_0^1 (\ta+2\tx_{-1})\td_{-i,i}\ty^j.
\end{align*}
Note the highest CMZV weight drops by 1 as predicted by Thm.~\ref{thm-Mixed-Parity-all}\ref{enu:thm-Mixed-Parity-gen}.
For example,
\begin{align*}
&\,\Xi(2n-1,2n;2,1)=  2 G-\frac12\pi\log 2-\log 2 \approx 0.04999096264,\\
% 2 Catalan - 1/2 (2 + \[Pi]) Log[2]  \approx 0.0499909626426916554416304592878
&\,\Xi(2n-1,2n;3,1)= \frac{3\pi^3}{32} -4\Im\Li_3\Big(\frac{1+i}2\Big) -\frac{\pi\log 2}{8} \Big(3\log2 -4\Big)+\log 2-2G\approx 0.010517475685.
%-2 Catalan + (3 \[Pi]^3)/32 - 4 Im[PolyLog[3, 1/2 + I/2]] + Log[2] - 1/8 \[Pi] Log[2] (-4 + Log[8])  \approx 0.0105174756851544690832391376069
\end{align*}
Next, for all $b\ge 2$ we have
\begin{align*}
\Xi(2n-1,2n;2,b)=&\,\sum_{n_1>n_2>0} \frac{a_{n_1}}{(2n_1-1)^2(2n_2)^b}
=\int_0^{\pi/2}  \frac{dt}{\tan^2 t} d(\sec t) \,\cot t\,dt \, F_{b-1}  (\csc t-\cot t)\,dt\\
=&\, \int_0^{\pi/2} \Big( \csc t\cot t \, dt\, \cot t\,dt - \cot^2 t \, dt\csc t\,dt\Big)  \, F_{b-1} (\csc t-\cot t)\,dt\\
=&\, \int_0^{\pi/2} \Big( (\csc t-1)\cot t\,dt - (\csc^2 t-1) \, dt\csc t\,dt\Big) \, F_{b-1}  (\csc t-\cot t)\,dt\\
=&\, \int_0^{\pi/2} \Big(   dt\csc t\,dt- \cot t\,dt\Big)  \, F_{b-1}  (\csc t-\cot t)\,dt\\
=&\, -i(-1)^b \int_0^1 (2\tx_{-1}-\tx_i-\tx_{-i}) \ty^{b-2} \td_{-1,1} \td_{-i,i} - (-1)^b \int_0^1 (2\tx_{-1}-\tx_i-\tx_{-i}) \ty^{b-1}.
&\,
% -(\[Pi]^2/24) + (11 \[Pi]^3)/96 - 4 Im[PolyLog[3, 1/2 + I/2]] + Log[2]^2/2 + 1/8 \[Pi] Log[2]^2 - Catalan Log[4] \approx 0.0203614173536757831050460275152
\end{align*}
More generally, setting $\Xi'(s,b)=\Xi(2n-1,2n;s,b)$, we see that
for all $s\ge 3,b\ge 2$,
\begin{align*}
\Xi(2n-1,2n;s,b)=&\,\sum_{n_1>n_2>0} \frac{a_{n_1}}{(2n_1-1)^s(2n_2)^b}
=\int_0^{\pi/2} F_{s-1} \frac{dt}{\tan^2 t} \tan t \sec t \, dt \, F_b (\csc t-\cot t)\,dt\\
=&\,-\Xi'(s-1,1)+\int_0^{\pi/2} F_{s-1}\,  \Big( \sec t \, dt -dt \,d(\sec t)\Big) \, F_b (\csc t-\cot t)\,dt\\
=&\,-\Xi'(s-1,b)+\int_0^{\pi/2} F_{s-1}\,  dt  \, \sec t F_b (\csc t-\cot t)\,dt\\
=&\,-\Xi'(s-1,b)+\int_0^{\pi/2} F_{s-1}\,  dt  \, \csc t \,  dt  \, F_{b-1} (\csc t-\cot t)\,dt \\
=&\,-\Xi'(s-1,b)-(-1)^{s+b} i \int_0^1 (2\tx_{-1}-\tx_i-\tx_{-i})\ty^{b-2}\td_{-1,1} \td_{-i,i} \ty^{s-2}\\
=&\,(-1)^s \Xi'(2,b)-(-1)^{s+b}  i \sum_{j=1}^{s-2} \int_0^1 (2\tx_{-1}-\tx_i-\tx_{-i})\ty^{b-2}\td_{-1,1} \td_{-i,i} \ty^j\\
=&\,-(-1)^{s+b} \int_0^1 (2\tx_{-1}-\tx_i-\tx_{-i}) \bigg(\ty^{b-1}+ i \sum_{j=0}^{s-2}\ty^{b-2}\td_{-1,1} \td_{-i,i} \ty^j \bigg)\\
&\, \in\CMZV_{\le s+b-1}^4 \otimes \Q[i]
\end{align*}
by induction.
\end{ex}

\begin{ex}\label{eg-gam-gb-block-App}
We now consider series given by a $\gam$-$\gb^\star$-block chain. By  Thm.~\ref{thm-LeadGammaBlock} we have
\begin{align*}
\Xi(2n-1,2n+1;1,1)= \sum_{n_1>n_2\ge 0} \frac{a_{n_1}}{(2n_1-1)(2n_2+1)} = \Xi(2n+1;1)= \frac{\pi}{2}.
\end{align*}
For any $s\ge 2$ by \eqref{equ-gam-depthd} and \eqref{endgb-block-Depth1}
we get
\begin{align*}
\Xi(2n-1,2n+1;s,1)=&\,\sum_{n_1>n_2\ge 0} \frac{a_{n_1}}{(2n_1-1)^s(2n_2+1)}\\
=&\,\int_0^{\pi/2} F_{s-1} \frac{dt}{\tan^2 t} \sec^2 t \, dt \,dt
= \int_0^{\pi/2} F_s \,dt-I_{s-1},
\end{align*}
where for all $s\ge 2$
\begin{align*}
I_s=&\,\int_0^{\pi/2}  F_s \frac{dt}{\tan^2 t}  \tan t \,dt
=\int_0^{\pi/2}  F_s \csc^2 t dt \tan t \,dt - F_s \,dt \tan t \,dt
=\int_0^{\pi/2}  F_s \,dt - F_s \,dt \tan t \,dt -I_{s-1}.
\end{align*}
Note that
\begin{align*}
I_1=&\,\int_0^{\pi/2} \frac{dt}{\tan^2 t}  \tan t \,dt=\int_0^{\pi/2} \csc^2 t dt \tan t \,dt-\int_0^{\pi/2} dt \tan t \,dt \\
=&\, \int_0^{\pi/2} \Big[-\cot u\Big]_t^{\pi/2} \tan t \,dt-\int_0^{\pi/2} dt \tan t \,dt \\
=&\, \int_0^{\pi/2} dt-\int_0^{\pi/2} dt \tan t \,dt =\frac{\pi}{2}-i\int_0^1 \tz\td_{-i,i}.
\end{align*}
Thus
\begin{align*}
\Xi(2n-1,2n+1;s,1)=&\, i\int_0^1 \sum_{j=2}^s (-1)^{s-j}\Big(F_j \,dt + F_{j-1} \,dt \tan t \,dt\Big)-(-1)^{s}\frac{\pi}{2}\\
=&\,  (-1)^s \bigg(i\int_0^1\sum_{j=2}^s\Big(\td_{-i,i} \ty^{j-1}+\tz\td_{-i,i}\ty^{j-2}\Big)-\frac{\pi}{2}\bigg).
\end{align*}
Thus the highest CMZV weight again drops by 1 as predicted by  Thm.~\ref{thm-Mixed-Parity-all}\ref{enu:thm-Mixed-Parity-gen}.
In particular,
\begin{align*}
\Xi(2n-1,2n+1;2,1)=&\,  i\int_0^1 \td_{-i,i} \ty+\tz \td_{-i,i}-\frac{\pi}{2}=\pi\Big(\log2-\frac12\Big)\approx 0.606789764,\\
%\[Pi] (-(1/2) + Log[2])\approx  0.606789763508705511269367206598
\Xi(2n-1,2n+1;3,1)=&\,  -i\int_0^1\Big(\td_{-i,i} (\ty^2+\ty+1)+ \tz\td_{-i,i}(\ty+1)\Big) ,\\
=&\,  \frac{\pi(2 - 4\log2 + 3\log^2 2)}{4}\approx 0.52525098,\\
%1/4 \[Pi] (2 - 4 Log[2] + 3 Log[2]^2)\approx 0.525250980681666697017824613974
\Xi(2n-1,2n+1;4,1)=&\,  i\int_0^1\Big(\td_{-i,i} (\ty^3+\ty^2+\ty+1)+ \tz\td_{-i,i}(\ty^2+\ty+1)\Big) ,\\
=&\,  \frac{\pi(48\log2+ \pi^2 \log2 - 36\log^2 2 + 16 \log^3 2  -24 + 3\zeta(3))}{48}  \approx  0.5072631333.
%1/48 \[Pi] (48 Log[2] + \[Pi]^2 Log[2] - 36 Log[2]^2 + 16 Log[2]^3 + 3 (-8 + Zeta[3]))  \approx 0.507263133308836051758105945000
\end{align*}
Next, for all $b\ge 2$ we have
\begin{align*}
\Xi(2n-1,2n+1;2,b)=&\,\sum_{n_1>n_2>0} \frac{a_{n_1}}{(2n_1-1)^2(2n_2+1)^b}
=\int_0^{\pi/2}  \frac{dt}{\tan^2 t} d(\tan t) \,\cot t\,dt \, F_{b-1}  \,dt\\
=&\, \int_0^{\pi/2} \Big(\cot t \, dt\, \cot t\,dt - \cot^2 t \, dt \,dt\Big)  \, F_{b-1}  \,dt\\
=&\, \int_0^{\pi/2} \Big(\cot t \, dt\, \cot t\,dt - (\csc^2 t-1) \, dt \,dt\Big) \, F_{b-1}  \,dt\\
=&\, \int_0^{\pi/2} \Big(\cot t \, dt\, \cot t\,dt+dt \,dt-\cot t \,dt\Big) \, F_{b-1}   \,dt\\
=&\,i(-1)^b \int_0^1 \td_{-i,i}\ty^{b-2}\Big(\td_{-i,i}^2 - \ty^2- \ty) .
\end{align*}
In particular
\begin{align*}
\Xi(2n-1,2n+1;2,2)= &\,  \frac{\pi}{24} \Big(\pi^2 + 6\log2 (\log2-2) \Big) \approx 0.58048206459.
%1/24 (\[Pi]^3 + 6 \[Pi] (-2 + Log[2]) Log[2]) \approx 0.580482064590815511490232618868
\end{align*}
More generally, setting $\Xi''(s,b)=\Xi(2n-1,2n+1;s,b)$ for all $s\ge2$, we see that
for all $s,b\ge 2$,
\begin{align*}
\Xi(2n-1,2n+1;s,b)=&\,\sum_{n_1>n_2\ge 0} \frac{a_{n_1}}{(2n_1-1)^s(2n_2+1)^b}
=\int_0^{\pi/2} F_{s-1} \frac{dt}{\tan^2 t} d(\tan t) \, F_b \,dt\\
=&\,-\Xi''(s-1,b)+\int_0^{\pi/2} F_{s-1}\,  \Big(\csc t \sec t \, dt -dt \,d(\tan t)\Big) \, F_b  \,dt\\
=&\,-\Xi''(s-1,b)+\int_0^{\pi/2} F_{s-1}\, \Big(\csc t \sec t \, dt -\tan t\,dt +dt\circ \tan t \Big) \, F_b  \,dt\\
=&\,-\Xi''(s-1,b)+\int_0^{\pi/2} F_{s+b-1}\,  dt -F_{s-1}\,dt\,dt\, F_{b-1}  \,dt \\
=&\,(-1)^{s}\Xi''(2,b)+i (-1)^{s+b}  \sum_{j=1}^{s-2} \int_0^1 \td_{-i,i}\Big( \ty^{b-2} \td_{-i,i}^2 \ty^j-\ty^{j+b} \Big) \\
=&\,-i(-1)^{s+b}\int_0^1 \td_{-i,i}\ty^{b-2}  \ty+i (-1)^{s+b}  \sum_{j=0}^{s-2} \int_0^1
    \td_{-i,i} \Big(\ty^{b-2} \td_{-i,i}^2 \ty^j- \ty^{j+b} \Big) \\
\in&\, i\CMZV_{\le s+b-1}^4.
\end{align*}
In particular,
\begin{align*}
\Xi(2n-1,2n+1;3,2)= \frac{\pi}{96}\Big(4 \pi^2 (\log2-1) + 48 \log2 - 24 \log^22 +    8 \log^32 + 3 \zeta(3)\Big) \approx 0.5202116317858.
%1/96 \[Pi] (4 \[Pi]^2 (-1 + Log[2]) + 48 Log[2] - 24 Log[2]^2 +    8 Log[2]^3 + 3 Zeta[3]) \approx 0.520211631785815615667122725715
\end{align*}
\end{ex}

\begin{ex}
We now consider series given by a $\gam$-$\gam$-$\gb^\star$-block chain. In general, we have
\begin{align*}
&\,\Xi(2n-1,2n-1,2n+1;a,b,c)\\
=&\,\sum_{n_1>n_2>n_3\ge 0} \frac{a_{n_1}}{(2n_1-1)^a(2n_2-1)^b(2n_3+1)^c}\\
=&\,\sum_{n_1>n_2+1>n_3\ge 0} \frac{a_{n_1}}{(2n_1-1)^a(2n_2+1)^b(2n_3+1)^c}\\
=&\,\sum_{n_1>n_2+1>n_3\ge 0} \frac{a_{n_1}}{(2n_1-1)^a(2n_2+1)^b(2n_3+1)^c}\\
=&\,\sum_{n_1>n_2+1,n_2\ge n_3\ge 0} \frac{a_{n_1}}{(2n_1-1)^a(2n_2+1)^b(2n_3+1)^c}\\
=&\,\bigg( \sum_{n_1>n_2,n_2\ge n_3\ge 0}  \
-\ \sum_{n_1=n_2+1,n_2\ge n_3\ge 0} \bigg) \frac{a_{n_1}}{(2n_1-1)^a(2n_2+1)^b(2n_3+1)^c}\\
=&\,\sum_{n_1>n_2\ge n_3\ge 0} \frac{a_{n_1}}{(2n_1-1)^a(2n_2+1)^b(2n_3+1)^c}
-\sum_{n_2\ge n_3\ge 0} \frac{a_{n_1}}{(2n_2+1)^{a+b}(2n_3+1)^c}.
\end{align*}
We are thus reduced to the case with the only $\gam$-block appearing at the beginning, as demonstrated in the proof
of  Thm.~\ref{thm-Mixed-Parity-all}.

In fact, we may also compute these sums directly.
By  Thm.~\ref{thm-LeadGammaBlock}   we have
\begin{align*}
\Xi(2n-1,2n-1,2n+1;1,1,1)= \Xi(2n+1;1)= \frac{\pi}{2}.
\end{align*}
For any $s\ge 2$ by \eqref{equ-gam-depthd}  and \eqref{endgb-block-Depth1}
we get
\begin{align*}
\Xi(2n-1,2n-1,2n+1;s,1,1)=&\,\sum_{n_1>n_2>n_3\ge 0} \frac{a_{n_1}}{(2n_1-1)^s(2n_2-1)(2n_3+1)}\\
=&\,\int_0^{\pi/2} F_{s-1} \frac{dt}{\tan^2 t} \tan t dt \sec^2 t \, dt \,dt.
\end{align*}
If $s=2$, we get
\begin{align*}
&\, \Xi(2n-1,2n-1,2n+1;2,1,1)=\sum_{n_1>n_2>n_3\ge 0} \frac{a_{n_1}}{(2n_1-1)^2(2n_2-1)(2n_3+1)}\\
=&\  \int_0^{\pi/2} \frac{dt}{\tan^2 t} \tan t \,dt \sec^2 t \, dt \,dt
=\int_0^{\pi/2} \frac{dt}{\tan^2 t} \Big(\tan^2 t \, dt \,dt -\tan t \,dt \tan t \,dt\Big)\\
=&\ \int_0^{\pi/2} \frac{dt}{\tan^2 t} \Big(\sec^2 t \, dt \,dt -dt \,dt -\tan t \,dt \tan t \,dt\Big)\\
=&\, \int_0^{\pi/2} \cot t\,dt  \,dt - (\csc^2 t-1)\,dt \Big(\tan t \, dt+dt \,dt  + \tan t \,dt \tan t \,dt\Big) \\
=&\,  \int_0^{\pi/2} \cot t\,dt  \,dt +\int_0^{\pi/2} \Big[\cot u\Big]_t^{\pi/2} \Big(\tan t \, dt+dt \,dt  + \tan t \,dt \tan t \,dt\Big) \\
&\, \hskip3cm +\int_0^{\pi/2}dt\Big(\tan t \, dt+dt \,dt  + \tan t \,dt \tan t \,dt\Big)  \\
=&\,  \int_0^{\pi/2} \cot t\,dt  \,dt-\int_0^{\pi/2} \Big(dt+\cot t\,dt \,dt + dt \tan t \,dt\Big) +\int_0^{\pi/2}dt\Big(\tan t \, dt+dt \,dt  + \tan t \,dt \tan t \,dt\Big)  \\
=&\,   \int_0^{\pi/2}dt\Big(dt \,dt  + \tan t \,dt \tan t \,dt-1\Big)  \\
=&\,  i\int_0^1   \td_{-i,i}^3 + \td_{-i,i} - \tz \tz \td_{-i,i}
=\frac{\pi(\pi^2 +6\log^2 2-12)}{24}  \approx 0.0984787829.
% 1/24 (\[Pi]^3 + 6 \[Pi] (-2 + Log[2]^2)) \approx  0.0984787829477199575092553763467
\end{align*}
If $s\ge 3$ then
\begin{align*}
&\,\Xi(2n-1,2n-1,2n+1;s,1,1)
=\int_0^{\pi/2} F_{s-1} \, d(-\cot t)\, \tan t dt \,d(\tan t) \,dt-F_{s-1} \, dt\, \tan t dt \, d(\tan t) \, dt \\
=&\,-\Xi(2n-1,2n-1,2n+1;s-1,1,1)+\int_0^{\pi/2} F_{s-1} (dt- dt\, \tan t dt)\, d(\tan t) \,dt \\
=&\,-\Xi(2n-1,2n-1,2n+1;s-1,1,1)+\int_0^{\pi/2} F_{s-1} \Big[( \tan t\, dt- dt\, \tan^2 t dt)\,dt- (dt- dt\, \tan t dt)\,\tan t\,dt\Big] \\
=&\,-\Xi(2n-1,2n-1,2n+1;s-1,1,1)+\int_0^{\pi/2} F_{s-1} \Big[ dt\,dt\,dt+ dt\, \tan t \,dt\,\tan t\,dt\Big] \\
=&\,-\Xi(2n-1,2n-1,2n+1;s-1,1,1)- (-1)^s i\int_0^1 \Big[\tz\tz \td_{-i,i}-\td_{-i,i}^3 \Big] \ty^{s-2} \\
=&\,(-1)^s \Xi(2n-1,2n-1,2n+1;2,1,1)- (-1)^s \sum_{j=1}^{s-2} i\int_0^1 \Big[\tz\tz\td_{-i,i}-\td_{-i,i}^3 \Big] \ty^j\\
=&\,(-1)^s i \td_{-i,i}-(-1)^s \sum_{j=0}^{s-2} i\int_0^1 \Big[\tz\tz\td_{-i,i}-\td_{-i,i}^3 \Big] \ty^j.
\end{align*}
Thus the highest CMZV weight again drops by 1 as predicted by  Thm.~\ref{thm-Mixed-Parity-all}\ref{enu:thm-Mixed-Parity-gen}.
In particular
\begin{align*}
&\, \Xi(2n-1,2n-1,2n+1;3,1,1) =\frac{\pi(2\pi^2\log2-4\pi^2+24\log^3 2 -24\log^2 2-15\zeta(3)+48)}{96}  \approx 0.02076805749.
% 1/96 (\[Pi]^3 (-4 + Log[4])+3 \[Pi] (16 - 8 Log[2]^2 + 8 Log[2]^3 - 5 Zeta[3])) \approx 0.0207680574943827443738891503486
\end{align*}
\end{ex}

\begin{ex}
We now consider series given by a $\ga$-$\gam$-block chain. In general, for all $a,b\in\N$
\begin{align*}
\Xi(2n,2n-1;s,b)=&\,\sum_{n_1> n_2> 0} \frac{a_{n_1}}{(2n_1)^s (2n_2-1)^b}\\
=&\,\sum_{n_1> n_2+1> 0} \frac{a_{n_1}}{(2n_1)^s (2n_2+1)^b}\\
=&\,\sum_{n_1> n_2\ge 0} \frac{a_{n_1}}{(2n_1)^s (2n_2+1)^b}-\sum_{n>0} \frac{a_{n}}{(2n)^s (2n-1)^b} .
\end{align*}
By partial fractions we get
\begin{align*}
\frac{1}{x^{a}(x-1)^b}=\sum_{j=1}^b \binom{-a}{b-j} \frac{1}{(x-1)^j} +\sum_{j=1}^a (-1)^{a+b-j} \binom{-b}{a-j} \frac{1}{x^j}.
\end{align*}
For example, when $b=1$ we have
\begin{align*}
\Xi(2n,2n-1;s,1)=&\,\sum_{n_1> n_2\ge 0} \frac{a_{n_1}}{(2n_1)^s (2n_2+1)}
-\sum_{n>0} \frac{a_{n}}{2n-1}+\sum_{j=1}^s \sum_{n>0} \frac{a_{n}}{(2n)^j} .
\end{align*}
From Example~\ref{exa-gam1}, Example~\ref{eg-Xi(2n,2n+1)} and Example~\ref{eg-single-ga-gb}
\begin{align*}
\Xi(2n,2n-1;s,1)=&\,  (-1)^{s-1}  \int_0^1 \tz\td_{-1,1}\ty^{s-1}
-1-\sum_{j=0}^{s-1} (-1)^j \int_0^1 (2\tx_{-1}- \tx_i-\tx_{-i})\ty^j.
\end{align*}

We can also compute $\Xi(2n,2n-1;s,1)$ directly as follows. By \eqref{equ-gam-d=1} and   \eqref{equ-ga-depthd}
we have
\begin{align*}
\Xi(2n,2n-1;s,1)=&\,\sum_{n_1> n_2> 0} \frac{a_{n_1}}{(2n_1)^s (2n_2-1)}\\
=&\,\int_0^{\pi/2} F_s ( \sin t\, dt-\csc t\,dt \tan t \,dt)  \tan t \sec t \,dt  \\
=&\,\int_0^{\pi/2} F_s ( \tan t\, dt-\csc t\,dt \tan t  \sec t\,dt)   -\int_0^{\pi/2} F_s ( \sin t\, dt-\csc t\,dt \tan t \,dt)    \\
=&\,\int_0^{\pi/2} F_s ( \tan t\, dt-\csc t\sec t\,dt + \csc t\,dt)   -\int_0^{\pi/2} F_s ( \sin t\, dt-\csc t\,dt \tan t \,dt)    \\
=&\,  (-1)^{s-1}  \int_0^1 ( \tx_i+\tx_{-i}-2\tx_{-1}+ \tz\td_{-1,1} ) \ty^{s-1} - I_s,  % s=2  \approx 0.0605084,
\end{align*}
where $I_1=\int_0^{\pi/2}  \sin t\, dt =1$ and for all $s\ge 2$
\begin{align*}
I_s=&\, \int_0^{\pi/2} F_s\sin t\, dt = \int_0^{\pi/2} F_s(1-\cos t)\, dt
=I_{s-1}+\int_0^{\pi/2} F_{s-1} (\cot t-\csc t\, dt \\
=&\, I_{s-1}+(-1)^s \int_0^1 (2\tx_{-1}- \tx_i-\tx_{-i})\ty^{s-2}
=1+\sum_{j=0}^{s-2} (-1)^j \int_0^1 (2\tx_{-1}- \tx_i-\tx_{-i})\ty^j.
\end{align*}
Thus
\begin{align*}
\Xi(2n,2n-1;s,1)=&\,  (-1)^{s-1}  \int_0^1 \tz\td_{-1,1}\ty^{s-1}
-1-\sum_{j=0}^{s-1} (-1)^j \int_0^1 (2\tx_{-1}- \tx_i-\tx_{-i})\ty^j  .
\end{align*}
In particular,
\begin{align*}
\Xi(2n,2n-1;1,1)=&\, \int_0^1 \Big( \tz\td_{-1,1}+\tx_i+\tx_{-i}-2\tx_{-1}\Big) -1=\frac{\pi^2}8 + \log 2-1 \approx 0.926847730696,\\
%\[Pi]^2/8 + Log[2]-1 \approx 0.92684773069611513677154349644
\Xi(2n,2n-1;2,1)=&\,\frac{1}{24}\Big(\pi^2 (1-3\log2) + 21\ze(3)\Big)-\frac12\log^2 2+\log2-1 \approx 0.0608179225954,\\
% 1/24 (-\[Pi]^2 (-1 + Log[8]) + 3 (-4 Log[2]^2 + Log[256] + 7 Zeta[3]))  \approx 0.06081792259540663782062064703
\Xi(2n,2n-1;3,1)=&\,\frac{\pi^2}{192}\Big(\pi^2+12\log^22-8\log2+8\Big)+\frac{\log^22}{6}\Big(\log2-3\Big)\\
    &\, \ \hskip2cm -\frac{\log2}{8}\Big(7\ze(3)-8\Big)+\frac{\ze(3)-4}{4}\approx 0.0097817.
%1/192 (\[Pi]^4 + 4 \[Pi]^2 (2 + 3 Log[2]^2 - Log[4]))+ 1/6 (-3 Log[2]^2 +  Log[2]^3) - 1/8 Log[2] (-8 + 7 Zeta[3]) + 1/4 (-4 + Zeta[3])
% \approx 0.00978170256625667943414500953480
\end{align*}
\end{ex}

\begin{ex} In this example, we explicit reduce a $\gam$-$\ga$-$\gam$-block chain to the case
with the only $\gam$-block appearing at the beginning. For all $a,b,c\in \N$,
\begin{align*}
&\,\Xi(2n-1,2n,2n-1;a,b,c)\\
=&\,\sum_{n_1>n_2>n_3> 0} \frac{a_{n_1}}{(2n_1-1)^a(2n_2)^b(2n_3-1)^c}\\
=&\,\sum_{n_1>n_2>n_3+1> 0} \frac{a_{n_1}}{(2n_1-1)^a(2n_2)^b(2n_3+1)^c}\\
=&\,\sum_{n_1>n_2>n_3+1,n_3\ge 0}\frac{a_{n_1}}{(2n_1-1)^a(2n_2)^b(2n_3+1)^c}\\
=&\,\bigg( \sum_{n_1>n_2>n_3\ge 0}  \
-\ \sum_{n_1>n_2=n_3+1, n_3\ge 0} \bigg) \frac{a_{n_1}}{(2n_1-1)^a(2n_2)^b(2n_3+1)^c}\\
=&\,\sum_{n_1>n_2> n_3\ge 0} \frac{a_{n_1}}{(2n_1-1)^a(2n_2)^b(2n_3+1)^c}
-\sum_{n_1>n_2>0} \frac{a_{n_1}}{(2n_1-1)^a(2n_2)^{b}(2n_2-1)^c}.
\end{align*}
By partial fractions we get
\begin{align*}
\frac{1}{x^{b}(x-1)^c}=\sum_{j=1}^c \binom{-b}{c-j} \frac{1}{(x-1)^j} +\sum_{j=1}^b (-1)^{b+c-j} \binom{-c}{b-j} \frac{1}{x^j}.
\end{align*}
For example, we have
\begin{align*}
\Xi(2n-1,2n,2n-1;a,2,2)
=&\,\sum_{n_1>n_2> n_3\ge 0} \frac{a_{n_1}}{(2n_1-1)^a(2n_2)^2(2n_3+1)^2}\\
&\,-\sum_{n_1>n_2>0} \frac{a_{n_1}}{(2n_1-1)^a}\bigg( \frac{1}{(2n_2)^2}-\frac{2}{2n_2}+\frac{1}{(2n_2-1)^2}-\frac{2}{2n_2-1}\bigg).
\end{align*}
But for any $a,b\in\N$
\begin{align*}
\sum_{n_1>n_2>0} \frac{a_{n_1}}{(2n_1-1)^a(2n_2-1)^b}=\sum_{n_1>n_2\ge 0} \frac{a_{n_1}}{(2n_1-1)^a(2n_2+1)^b}
-\sum_{n> 0} \frac{a_{n}}{(2n_1-1)^{a+b}}.
\end{align*}
\end{ex}

\section{Examples of Ap\'ery-type series involving squared central binomial coefficients}
In this section, we present a few examples illustrating the ideas in  Thm.~\ref{thm-binnSquare}.

\begin{ex}  \label{eg-binn-sqr-single-ga-block}
We consider series given by a $\ga$-block.
From  \eqref{equ-binnSquare-ga}, \eqref{endga-block} and \eqref{endga-block-Depth1}
we have
\begin{align*}
 \sum_{n> 0} \frac{a_{n}^2}{(2n)^s}
=\frac{2}{\pi} \int_0^{\pi/2} dt\, F_s(\csc t-\cot t)dt
=&\,  (-1)^{s} \frac{2i}{\pi} \int_0^1 (2\tx_{-1}-\tx_{-i}-\tx_{i}) \ty^{s-1} \td_{-i,i}.  % s=2  \approx 0.0605084.
\end{align*}
Thus
\begin{align}\label{equ-anSqOver2n}
\sum_{n> 0} \frac{a_{n}^2}{2n}=&\, -\frac{2i}{\pi} \int_0^1 (2\tx_{-1}-\tx_{-i}-\tx_{i}) \td_{-i,i}
    =\frac{2}{\pi} (\pi\log2-2G)\approx 0.2200507,\\
\sum_{n> 0} \frac{a_{n}^2}{(2n)^2}
=&\, \frac{2i}{\pi} \int_0^1 (2\tx_{-1}-\tx_{-i}-\tx_{i})\ty\td_{-i,i}
    = \frac{2}{\pi}\left(\frac3{16}\pi^3 - 8\Im\Li_3\Big(\frac{1+i}{2}\Big) - \frac34 \pi \log^2 2\right)\approx 0.07704,\nonumber\\
\sum_{n> 0} \frac{a_{n}^2}{(2n)^3}
=&\, - \frac{2i}{\pi} \int_0^1 (2\tx_{-1}-\tx_{-i}-\tx_{i}) \ty^2 \td_{-i,i}\nonumber\\
=&\, \frac{2}{\pi}\left(24 \gb(4) - 32 \Im\Li_4\Big(\frac{1+i}{2}\Big) -
 \frac38 \pi^3 \log 2 + \frac{\pi}4 (2\log^3 2 + \zeta(3))\right)\approx 0.03418181.\nonumber
\end{align}
\end{ex}

\begin{ex}  We consider series given by a $\gb^\star$-block.
From \eqref{equ-binnSquare-gb}, \eqref{endgb-block} and \eqref{endgb-block-Depth1}
we have
\begin{align*}
 \sum_{n\ge 0} \frac{a_{n}^2}{(2n+1)^s}
=\frac{2}{\pi} \int_0^{\pi/2} \csc t\,dt\, F_s \, dt
=&\,  (-1)^{s+1} \frac{2i}{\pi} \int_0^1 \td_{-i,i} \ty^{s-1} \td_{-1,1}.  % s=2  \approx 0.0605084.
\end{align*}
Thus
\begin{align*}
\sum_{n\ge 0} \frac{a_{n}^2}{2n+1}=&\, \frac{2i}{\pi} \int_0^1\td_{-i,i} \td_{-1,1} =\frac{4G}{\pi}\approx 1.1662436,\\
\sum_{n\ge 0} \frac{a_{n}^2}{(2n+1)^2}
=&\, -\frac{2i}{\pi} \int_0^1 \td_{-i,i} \ty \td_{-1,1}
    =  \frac{2}{\pi}\left(\frac3{16}\pi^3 - 8\Im\Li_3\Big(\frac{1+i}{2}\Big)+\frac{\pi}4 \log^2 2\right)\approx 1.0379477643, \nonumber\\
\sum_{n\ge 0} \frac{a_{n}^2}{(2n+1)^3}
=&\, \frac{2i}{\pi} \int_0^1 \td_{-i,i} \ty^2  \td_{-1,1}
= \frac{2}{\pi}\left(\Im\Li_4\Big(\frac{1+i}{2}\Big)-24\gb(4) +
 \frac38 \pi^3 \log 2 + \frac{\pi}6 \log^3 2\right) \approx 1.01087951. \nonumber
\end{align*}
\end{ex}

\begin{ex}  \label{binn-sqr-1-gam-block}
We consider series given by a single $\gam$-block. From \eqref{equ-gam-d=1}
we have
\begin{align}\label{equ-Gamma1}
 \sum_{n> 0} \frac{a_{n}^2}{2n-1} =\frac{2}{\pi}\int_0^{\pi/2} \cos t\,dt\, d(\sec t)
=\frac{2}{\pi}  \int_0^{\pi/2} (1-\cos t) dt
=\frac{2}{\pi} \Big(\frac{\pi}{2}-1\Big) \approx 0.36338.
\end{align}
Note that there is no weight drop here so it is one of the two reasons we need to
introduce $\iota(\emptyset)=1$ in Thm.~\ref{thm-binnSquare}.
The other reason is given in Example~\ref{eg-binn-sqr-gam-ga-block}.
For all $s\ge2$
\begin{align*}
\Gamma_s:=  \sum_{n> 0} \frac{a_{n}^2}{(2n-1)^s}
=&\,\frac{2}{\pi} \int_0^{\pi/2} \sin t\,dt\, F_{s-1} \frac{dt}{\tan^2 t} d(\sec t)\\
=&\,\frac{2}{\pi} \int_0^{\pi/2} \sin t\,dt\, F_{s-1} (\csc^2 t-1)dt\, d(\sec t)\\
=&\,\frac{2}{\pi} \int_0^{\pi/2} \sin t\,dt\, F_{s-1} \Big( d(-\cot t)\, d(\sec t)- dt\,d(\sec t)\Big)\\
=&\,-\Gamma_{s-1}+\frac{2}{\pi} \int_0^{\pi/2} \sin t\,dt\, F_{s-1} \Big( \sec t\,dt-dt\, d(\sec t)\Big)\\
=&\,-\Gamma_{s-1}+\frac{2}{\pi} \int_0^{\pi/2} \sin t\,dt\, F_{s-1}  dt
=-\Gamma_{s-1}+I_{s-1} \\
=&\, \frac{2}{\pi}\left( (-1)^{s}\int_0^{\pi/2} \sin t\,dt \frac{dt}{\tan^2 t} d(\sec t)+\sum_{j=2}^{s-1} (-1)^{s-1-j} I_j\right),
\end{align*}
where
\begin{align*}
I_j=&\, \int_0^{\pi/2} \sin t\,dt\, F_j\,dt= \int_0^{\pi/2} \cos t\cot t\,dt\, F_{j-1}\,dt=\int_0^{\pi/2} \csc t dt\, F_{j-1}\,dt-I_{j-1}\\
=&\,(-1)^{j-1}\int_0^{\pi/2} \sin t\,dt\,dt+\sum_{k=1}^{j-1} (-1)^{j-1-k} \int_0^{\pi/2} \csc t\,  dt\, F_k\,dt,\\
=&\,(-1)^{j-1}  +\sum_{k=1}^{j-1} (-1)^j i\int_0^1 \td_{-i,i}\ty^{k-1} \td_{-1,1}.
\end{align*}
Hence for all $s\ge2$ we get
\begin{align*}
\Gamma_s= \sum_{n> 0} \frac{a_{n}^2}{(2n-1)^s}
=&\,  (-1)^{s} \frac{2}{\pi}\left( s-\frac{\pi}{2}-\sum_{k=2}^{s-1} i(s-k)\int_0^1 \td_{-i,i}\ty^{k-2}  \td_{-1,1}\right).
\end{align*}
Thus
\begin{align*}
\sum_{n> 0} \frac{a_{n}^2}{(2n-1)^2}=&\,\frac2{\pi}\Big(2-\frac{\pi}2\Big)  \approx 0.27323954,\\
\sum_{n> 0} \frac{a_{n}^2}{(2n-1)^3}=&\,\frac{2}{\pi}\left(\frac{\pi}{2}+2G-3\right)\approx 0.256384299.
\end{align*}
\end{ex}

\begin{ex}
We consider series given by a $\gam^{\circ d}$-block chain, with weight equal to 1 for every block.
From \eqref{equ-binnSquare-gam1}
and \eqref{equ-gam-d=1}
we have
\begin{align*}
\Gamma^{\circ d}:=&\, \sum_{n_1>\cdots>n_d> 0} \frac{a_{n_1}^2}{(2n_1-1)\cdots (2n_d-1)}
=\frac{2}{\pi}\int_0^{\pi/2} \cos t\,dt\, (\tan t\,dt)^{d-1}\, d(\sec t)\\
=&\,\frac{2}{\pi}  \int_0^{\pi/2} \cos t\,dt\,  (\tan t\,dt)^{d-2}\, (\tan t\sec t-\tan t) dt\\
=&\,\Gamma^{\circ (d-1)}- \frac{2}{\pi} A_{d-1} =\Gamma^{\circ 1}-\frac{2}{\pi} \sum_{k=1}^{d-1}A_k,
\end{align*}
where $A_0=1$ and for all $d\ge1$
\begin{align}\label{defn-Ad}
A_d= \int_0^{\pi/2} \cos t\,dt\, (\tan t\, dt)^{d}
=&\,  A_{d-1}-(-1)^d  \int_0^1  \tz^{d-1} (\tx_{-i}+\tx_{i}) \nonumber\\
=&\, A_0+\sum_{j=0}^{d-1} (-1)^{j} \int_0^1  \tz^j (\tx_{-i}+\tx_{i}).
\end{align}
Observe that by \eqref{equ-Gamma1}
in Example~\ref{binn-sqr-1-gam-block} we get $\Gamma^{\circ 1}=\Gamma_1=1-\frac{2}{\pi}$.
Therefore, for all $d\ge 1$ we have
\begin{align*}
\Gamma^{\circ d}:=&\,\frac{2}{\pi} \bigg(\frac{\pi}{2}-\sum_{k=0}^{d-1}A_k\bigg)
=\frac{2}{\pi} \bigg(\frac{\pi}{2}-d-\sum_{j=0}^{d-2} (-1)^{j}(d-1-j) \int_0^1  \tz^j (\tx_{-i}+\tx_{i})\bigg).
\end{align*}
We see that the weight drops by 1 as predicted by  Thm.~\ref{thm-binnSquare}\ref{enu:thm-binnSquare-gen}.
In particular, when $d=2$
\begin{align*}
&\, \sum_{n_1>n_2> 0} \frac{a_{n_1}^2}{(2n_1-1)(2n_2-1)}=\frac{2}{\pi}\Big(\frac{\pi}{2} + \log 2-2\Big) \approx 0.16803165557.
\end{align*}
\end{ex}

\begin{ex} \label{gam-depthd-App}
We consider series given by a $\gam^{\circ d}$-block chain with leading weight two and trailing weight one blocks.
By \eqref{equ-gam-d=1}
and \eqref{equ-binnSquare-gam-depthd}
\begin{align*}
S_d:=&\, \sum_{n_1>n_2>\cdots>n_d> 0} \frac{a_{n_1}^2}{(2n_1-1)^2(2n_2-1)\cdots(2n_d-1)}
=\frac{2}{\pi}  \int_0^{\pi/2} \sin t\, dt  \cot^2 t\,dt\, (\tan t\,dt)^{d-1}\,d(\sec t) \\
=&\,\frac{2}{\pi}  \int_0^{\pi/2} \bigg(- \sin t\, dt \, dt\, (\tan t\,dt)^{d-1}  d(\sec t)
    -\cos  t\, dt(\tan t\,dt)^{d-1} \,d(\sec t) \\
 &\,\hskip1cm +\sin t\, dt  \, dt \, (\tan t\,dt)^{d-2} d(\sec t) \bigg)
    = -2X_d + X_{d-1}  \\
=&\, \frac{2}{\pi} \bigg(d+1-\frac{\pi}{2}
+2\sum_{j=0}^{d-2} (-1)^{j}(d-1-j) \int_0^1  \tz^j (\tx_{-i}+\tx_{i})
-\sum_{j=0}^{d-3} (-1)^{j}(d-2-j) \int_0^1  \tz^j (\tx_{-i}+\tx_{i})\bigg)\\
=&\, \frac{2}{\pi} \bigg(d+1-\frac{\pi}{2}
+\sum_{j=0}^{d-2} (-1)^{j}(d-j) \int_0^1  \tz^j (\tx_{-i}+\tx_{i}) \bigg).
\end{align*}
This shows the weight may drop by two in this special case. In particular,
\begin{align}\label{equ-S2}
S_2=&\,\frac{2}{\pi}\Big(3 -\frac{\pi}2-2\log2 \Big) \approx 0.0273169,\\
S_3=&\,\frac{2}{\pi}\Big(\log^2 2-\frac{\pi^2}{12}-2\log2-\frac{\pi}{2}+4\Big) \approx 0.00493260131.\notag
\end{align}
\end{ex}

\begin{ex} \label{gam-gam}
For a $\gam$-$\gam$-block chain with second $\gam$-block having weight 1, we have
\begin{align*}
T_2:=\sum_{n_1>n_2> 0} \frac{a_{n_1}^2}{(2n_1-1)^2(2n_2-1)}=S_2=\frac{2}{\pi}\Big(3 -\frac{\pi}2-2\log2 \Big)
\end{align*}
by \eqref{equ-S2}. For all $s\ge 3$, by  \eqref{equ-gam-d=1}
and  \eqref{equ-binnSquare-gam-depthd}
\begin{align*}
T_s:=&\, \sum_{n_1>n_2> 0} \frac{a_{n_1}^2}{(2n_1-1)^s(2n_2-1)}
=\frac{2}{\pi}  \int_0^{\pi/2} \sin t\, dt (\cot t\,dt)^{s-2}\,\cot^2 t\,dt\, \tan t\,dt\,d(\sec t) \\
=&\, -T_{s-1}+\frac{2}{\pi} (B_{s-2} - C_{s-2})
=(-1)^s T_2+\frac{2}{\pi} \sum_{k=1}^{s-2} (-1)^{s-k} (B_k - C_k),
\end{align*}
where
\begin{align*}
B_0=&\, \int_0^{\pi/2} \sin t\,dt \, dt \,  d(\sec t)  =\int_0^{\pi/2}\cos t(\sec t-1)\,dt=\frac{\pi}{2}-1   \\
B_s=&\, \int_0^{\pi/2} \bigg(\sin t\,dt (\cot t\,dt)^s \, dt \,  d(\sec t) \bigg)
=\int_0^{\pi/2} \bigg(\cos t\cot t\,dt (\cot t\,dt)^{s-1}  \, dt \,  d(\sec t) \bigg)  \\
=&\, -B_{s-1}+ \int_0^{\pi/2} \bigg(\csc t\,dt (\cot t\,dt)^{s-1} \, dt \,d(\sec t) \bigg)
\end{align*}
and similarly
\begin{align*}
C_0=&\, \int_0^{\pi/2} \bigg(\sin t\,dt  \, dt\, \tan t\,dt\,  d(\sec t) \bigg)
= \int_0^{\pi/2} \cos t\,dt (d(\sec t)-\tan t\,dt) \\
= &\, \int_0^{\pi/2} (1-\cos t-\tan t+ \sin t\tan t)\,dt  \\
= &\, \int_0^{\pi/2} (1-2\cos t+\sec t-\tan t)\,dt = \frac{\pi}{2}-2-\int_0^1 (\tx_{-i}+\tx_{i})
= \frac{\pi}{2}-2+\log 2, \\
C_s=&\, \int_0^{\pi/2} \bigg(\sin t\,dt (\cot t\,dt)^s\, dt\, \tan t\,dt\,  d(\sec t) \bigg)
= \int_0^{\pi/2} \bigg( (\cos t\cot t\,dt)\, (\cot t\,dt)^{s-1}\, dt\,  \tan t\,dt\,  d(\sec t) \bigg)  \\
=&\, \int_0^{\pi/2} \bigg( (\csc t-\sin t) dt  \, (\cot t\,dt)^{s-1} \, dt\, (\tan t\sec t-\tan t)\,dt  \bigg)  \\
=&\,-C_{s-1}+ \int_0^{\pi/2} \bigg(  \csc t\,dt\, (\cot t\,dt)^{s-1} \, dt\, \Big(d(\sec t)-\tan t\,dt \Big) \bigg)  \\
=&\,-C_{s-1}+ \int_0^{\pi/2} \bigg(  \csc t\,dt\, (\cot t\,dt)^{s-1} \, dt \,d(\sec t) \bigg)
-  \int_0^{\pi/2} \bigg(\csc t\,dt\, (\cot t\,dt)^{s-1} \, dt\, \tan t\,dt \bigg).
\end{align*}
Thus
\begin{align*}
B_s-C_s=&\, -(B_{s-1}-C_{s-1})+\int_0^{\pi/2} \bigg(  \csc t\,dt\, (\cot t\,dt)^{s-1} \, dt\, \tan t\,dt \bigg)  \\
=&\, -(B_{s-1}-C_{s-1})+(-1)^s i\int_0^1\tz\td_{-i,i}\ty^{s-1}\td_{-1,1}  \\
=&\, (-1)^s (B_0-C_0)+(-1)^s \sum_{j=0}^{s-1} i\int_0^1\tz\td_{-i,i}\ty^j \td_{-1,1}  .
\end{align*}
Since $B_0-C_0=1-\log 2$ we get
\begin{align*}
T_s=&\, (-1)^s T_2+\frac{2}{\pi} \sum_{k=1}^{s-2} (-1)^{s-k} \left(
 (-1)^k(1-\log 2) +(-1)^k \sum_{j=0}^{k-1} i\int_0^1\tz\td_{-i,i}\ty^j \td_{-1,1}  \right) \\
=&\, (-1)^s T_2+\frac{2}{\pi} \sum_{k=1}^{s-2} (-1)^{s-k} \left(
 (-1)^k(1-\log 2) +(-1)^k \sum_{j=0}^{k-1} i\int_0^1\tz\td_{-i,i}\ty^j \td_{-1,1}  \right) \\
=&\, (-1)^{s} \frac{2}{\pi} \left(3 -\frac{\pi}2-2\log2+
(s-2)(1-\log 2) + \sum_{j=0}^{s-3} i(s-2-j)\int_0^1\tz\td_{-i,i}\ty^j \td_{-1,1}  \right)\\
=&\, (-1)^{s} \frac{2}{\pi} \left(1 -\frac{\pi}2+
s(1-\log 2) + \sum_{j=0}^{s-3} i(s-2-j)\int_0^1\tz\td_{-i,i}\ty^j \td_{-1,1}  \right).
\end{align*}
In particular
\begin{align*}
T_3=&\, - \frac{2}{\pi} \left(4 -\frac{\pi}2-3\log 2 + i \int_0^1\tz\td_{-i,i} \td_{-1,1}  \right) \\
=&\,\frac{2}{\pi} \left( \frac{3}{16}\pi^3 - 8 \Im\Li_3\Big(\frac{1+i}2\Big) +
 \frac14\pi (2 + \log^2 2) -G \log^2 2 + \log^3 2 -4  \right)
\approx 0.064673585123, \\
T_4=&\,  \frac{2}{\pi} \left(5 -\frac{\pi}2-4\log 2 +2i\int_0^1\tz\td_{-i,i} \td_{-1,1}
+ i\int_0^1\tz\td_{-i,i}\ty \td_{-1,1}   \right) \\
=&\,\frac{2}{\pi} \left(64 \Im\Li_4\Big(\frac{1+i}2\Big) + 8 \Im\Li_3\Big(\frac{1+i}2\Big)\log 2 +
 \frac{1}{12} \pi ( \log^3 2-6 - 6 \log^2 2) - 50 \gb(4) \right.  \\
&\, \left. +16 \Im\Li_3\Big(\frac{1+i}2\Big)+
 \frac{3}{16} \pi^3 (\log^3 2-2) - \log^4 2 +G \log^4 2 + 5\right)
\approx 0.002020623989.
\end{align*}
\end{ex}

\begin{ex}
We consider series given by a $\gam^{\circ d}$-$\ga$-block chain with $\ga$-block having weight one.
For all $d\ge 1$, we have by \eqref{equ-gam-d=1},  \eqref{endga-block-Depth1} and \eqref{equ-binnSquare-gam1}
\begin{align*}
U_d:=&\, \sum_{n_1>\cdots>n_d>m> 0} \frac{a_{n_1}^2}{(2n_1-1)\cdots(2n_d-1)(2m)}\\
=&\, \frac{2}{\pi}  \int_0^{\pi/2} \cos t\,dt\, (\tan t\, dt)^{d-1} \,d(\sec t) \Big(\csc t-\cot t\Big) \,dt
= U_{d-1}-D_{d-1}=U_0-\sum_{k=0}^{d-1}D_k,
\end{align*}
where by \eqref{equ-anSqOver2n}
\begin{align*}
U_0:=&\, \frac{2}{\pi}\Big(\pi\log2-2G\Big),\\
D_0=&\, \frac{2}{\pi}  \int_0^{\pi/2} \cos t\,dt\,\csc t(\sec t-1) \,dt
= \frac{2}{\pi}  \int_0^{\pi/2}(1-\sin t) \csc t(\sec t-1) \,dt \\
=&\, \frac{2}{\pi}  \int_0^{\pi/2} (\csc t\sec t-\csc t-\sec t+1) \,dt
=  \frac{2}{\pi}  \Big(\frac{\pi}{2}+\int_0^1 2\tx_{-1} \Big)=  \frac{2}{\pi} \Big(\frac{\pi}{2}-2\log 2\Big)
\end{align*}
and all $d\ge 1$
\begin{align*}
D_d =&\, \frac{2}{\pi}  \int_0^{\pi/2}\bigg(\cos t\,dt\, (\tan t\, dt)^d \csc t(\sec t-1) \,dt\bigg)\\
=&\, \frac{2}{\pi}  \int_0^{\pi/2}\bigg((1-\sin t)\tan t \,dt\, (\tan t\, dt)^{d-1} \csc t(\sec t-1) \,dt\bigg)\\
=&\, D_{d-1}+\frac{2}{\pi}  \int_0^{\pi/2}\bigg((\tan t-\sec t) \,dt\, (\tan t\, dt)^{d-1} \csc t(\sec t-1) \,dt\bigg)\\
=&\, D_{d-1}-(-1)^d \frac{2}{\pi}  \int_0^1 (\ta+2\tx_{-1})\tz^{d-1} (\tx_{-i}+\tx_{i}) \\
=&\, D_0+ \frac{2}{\pi} \sum_{j=0}^{d-1}  (-1)^j\int_0^1 (\ta+2\tx_{-1})\tz^j (\tx_{-i}+\tx_{i}).
\end{align*}
Hence for all $d\ge 1$,
\begin{align*}
U_d:= U_0-d D_0-\frac{2}{\pi} \sum_{j=0}^{d-2}(-1)^j(d-1-j) \int_0^1 (\ta+2\tx_{-1})\tz^j (\tx_{-i}+\tx_{i}).
\end{align*}
Thus the weight drops by 1, as predicted by Thm.~\ref{thm-binnSquare}\ref{enu:thm-binnSquare-gen},
except for the special case when $d=1$ when there is no weight drop.
In particular,
\begin{align*}%\label{equ-U1}
U_1=&\, \frac{2}{\pi}\Big(\pi\log2-2G-\frac{\pi}{2}+2\log 2)\Big)\\
U_2=&\, \frac{2}{\pi}\Big(\pi\log2-2G-\pi+4\log 2 -\int_0^1 (\ta+2\tx_{-1}) (\tx_{-i}+\tx_{i})\Big)\\ % \nonumber
=&\,\frac{2}{\pi}\Big(\pi\log2-2G-\pi+4\log 2+\frac{\pi^2}{12}-\frac32 \log^2 2\Big)\approx 0.049935489. %\nonumber
\end{align*}
Note that $U_1$ is one of the two reasons we need to introduce $\iota(l)$ in Thm.~\ref{thm-binnSquare}
since there is no weight drop in this special case.
\end{ex}

\begin{ex} \label{eg-binn-sqr-gam-ga-block2}
We consider series given by a $\gam$-$\ga$-block chain with $\ga$-block having weight one. When $\gam$-block has weight one
there is no weight drop from the last example.
But with higher weight $\gam$-block, the weight drop pattern resumes:
\begin{align*}
&\, \sum_{n_1>n_2> 0} \frac{a_{n_1}^2}{(2n_1-1)^2(2n_2)}
=\frac{2}{\pi}  \int_0^{\pi/2} \sin t\, dt  \cot^2 t\,dt\,d(\sec t) \Big(\csc t-\cot t\Big) \,dt\\
=&\,\frac{2}{\pi}  \int_0^{\pi/2} \bigg(- \sin t\, dt \, dt\,  d(\sec t)  \Big(\csc t-\cot t\Big) \,dt\\
&\, -\cos  t\, dt \,d(\sec t)  \Big(\csc t-\cot t\Big) \,dt +\sin t\, dt \,\sec t\, dt  \Big(\csc t-\cot t\Big) \,dt\bigg)\\
=&\,\frac{2}{\pi}  \int_0^{\pi/2} \bigg(-2\cos  t\, dt \,d(\sec t)  \Big(\csc t-\cot t\Big) \,dt + dt  \Big(\csc t-\cot t\Big) \,dt\bigg)\\
=&\,\frac{2}{\pi}  \int_0^{\pi/2} \bigg(  - dt \Big(\csc t-\cot t\Big) \,dt
    +2\cos  t\, dt  \Big(\csc t\sec t -\csc t\Big)\,dt  \bigg)\\
=&\,\frac{2}{\pi}  \int_0^{\pi/2} \bigg( -dt \Big(\csc t-\cot t\Big) \,dt
 + 2\Big(1-\sin t\Big)\Big(\csc t\sec t -\csc t\Big)\,dt  \bigg)\\
=&\,\frac{2}{\pi}  \int_0^{\pi/2} \bigg( -dt \Big(\csc t-\cot t\Big) \,dt
 + 2\Big(\csc t\sec t -\csc t-\sec t+1\Big)\,dt \bigg)\\
=&\,\frac{2}{\pi} \Big(  \pi -\int_0^1 i(2\tx_{-1}-\tx_{-i}-\tx_{i})\td_{-i,i}-4\tx_{-1}  \Big)\\
=&\,\frac{2}{\pi}\Big(2G-\pi\log2 +\pi - 4\log 2\Big) \approx 0.01486445.
\end{align*}
\end{ex}

\begin{ex}\label{eg-V1}
We consider series given by a $\gam^{\circ d}$-$\ga$-block chain. We have by \eqref{endga-block}
and \eqref{equ-binnSquare-gam1}
\begin{align*}
V_1(s):=&\, \sum_{n>m> 0} \frac{a_{n}^2}{(2n-1)(2m)^s}
=\frac{2}{\pi}  \int_0^{\pi/2} \cos t\,dt\, \,d(\sec t) (\cot t \,dt)^{s-1} \Big(\csc t-\cot t\Big) \,dt  \\
=&\, \frac{2}{\pi}  \int_0^{\pi/2}  dt\, (\cot t \,dt)^{s-1} \Big(\csc t-\cot t\Big) \,dt
-(1-\sin t) \csc t \,dt  (\cot t \,dt)^{s-2} \Big(\csc t-\cot t\Big)  \,dt  \\
=&\, \frac{2(-1)^s}{\pi}  \int_0^1   (2\tx_{-1}-\tx_{-i}-\tx_{i})\ty^{s-2}\Big(-i\ty\td_{-i,i}-\td_{-1,1}+i\td_{-i,i}\Big).
\end{align*}
Hence
\begin{align}\label{equ-V1}
V_1:=V_1(2)=&\,\frac{2}{\pi} \left(\frac{3\pi^3}{16}- \frac{\pi\log 2}{4} (3\log 2-4)-
 8\Im\Li_3\Big(\frac{1+i}2\Big)-2 G -\frac{\pi^2}{24}\right)  \approx 0.03529309, \\
V_1(3)=&\,24\beta(4) + \frac1{16}\Big(\pi^3 (3 - 6 \log2)+ 2\zeta(3)\Big) +  \frac{\pi}4 \Big( 2 \log^32-3 \log^22 + \zeta(3)\Big) ,\nonumber\\
&\, -8\Im\Li_{3}\Big(\frac{1+i}2\Big) -16 \Im\Li_{4}\Big(\frac{1+i}2\Big)  \approx 0.024452426383. \label{equ-V1s=3}
%1/16 (384 DirichletL[4, 2, 4] + \[Pi]^3 (3 - 6 Log[2])-2 (64 Im[PolyLog[3, 1/2 + I/2]] + 256 Im[PolyLog[4, 1/2 + I/2]]+ Zeta[3]) + 4 \[Pi] (-3 Log[2]^2 + 2 Log[2]^3 + Zeta[3]))
\end{align}
For any $s\ge 2$, we have by  \eqref{equ-gam-d=1},  \eqref{endga-block} and \eqref{equ-binnSquare-gam1}
\begin{align*}
V_d=&\, \sum_{n_1>\cdots>n_d>m> 0} \frac{a_{n_1}^2}{(2n_1-1)\cdots(2n_d-1)(2m)^2} \\
=&\,\frac{2}{\pi}  \int_0^{\pi/2} \cos t\,dt\,(\tan t\,dt)^{d-1} \,d(\sec t)  \cot t \,dt \Big(\csc t-\cot t\Big) \,dt
= V_{d-1}- \frac{2}{\pi} L_s=V_1- \frac{2}{\pi} \sum_{k=2}^s L_k,
\end{align*}
where
\begin{align*}
L_1:= \int_0^{\pi/2}\cos t\,dt\, \csc t \,dt   \Big(\csc t-\cot t\Big)  \,dt
=&\,  \int_0^{\pi/2} (1-\sin t) \csc t \,dt   \Big(\csc t-\cot t\Big)  \,dt \\
= &\,\int_0^1(2\tx_{-1}-\tx_{-i}-\tx_{i})(\td_{-1,1}- i\td_{-i,i}),
\end{align*}
and for all $s\ge 2$
\begin{align*}
L_d:=&\, \int_0^{\pi/2}\cos t\,dt\,(\tan t\,dt)^{d-1} \csc t \,dt   \Big(\csc t-\cot t\Big)  \,dt \\
= &\,\int_0^{\pi/2} (1-\sin t)\tan t\,dt \,(\tan t\,dt)^{d-2}  \csc t \,dt   \Big(\csc t-\cot t\Big)  \,dt \\
= &\,L_{s-1}+\int_0^{\pi/2} (\tan t-\sec t)\,dt \,(\tan t\,dt)^{d-2}  \csc t \,dt   \Big(\csc t-\cot t\Big)  \,dt \\
= &\,L_{s-1}+(-1)^s \int_0^1(2\tx_{-1}-\tx_{-i}-\tx_{i})\td_{-1,1}\tz^{d-2}(\tx_{-i}+\tx_{i}) \\
= &\,L_1+  \sum_{j=0}^{d-2} (-1)^j  \int_0^1(2\tx_{-1}-\tx_{-i}-\tx_{i})\td_{-1,1}\tz^j (\tx_{-i}+\tx_{i}) .
\end{align*}
Hence
\begin{align*}
V_d= &\, V_1- \frac{2}{\pi} \left( (d-1) L_1+ \sum_{j=0}^{d-2} (s-1-j) (-1)^j  \int_0^1(2\tx_{-1}-\tx_{-i}-\tx_{i})\td_{-1,1}\tz^j (\tx_{-i}+\tx_{i}) \right).
\end{align*}
So there is no weight drop in this case. In particular,
\begin{align*}
V_2=&\,\frac{2}{\pi} \left(\frac{3\pi^3}{16}-G(\pi+4)- 8\Im\Li_3\Big(\frac{1+i}2\Big)+ \frac{\pi^2}{24} (\log 2-2)    - \frac{\pi\log 2}{4} (3\log 2-8)+\frac{35}{16}\ze(3) \right)\\
 \approx &\, 0.01707012767.
\end{align*}
\end{ex}

\begin{cor} We have
\begin{align*}
\sum_{n=1}^\infty \frac{a_n^2 H_n^{(3)}}{2n-1}=\frac{2}{\pi}\Big(16 G +4\pi - 8\pi\log2- \zeta(3)-8\Big).
%-1 + 2 Catalan + \[Pi]/2 - \[Pi] Log[2] - Zeta[3]/8
\end{align*}
\end{cor}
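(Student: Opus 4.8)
\end{cor}

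\begin{proof}
The plan is to unfold $H_n^{(3)}=\zeta_n(3)=\sum_{n\ge j\ge 1}j^{-3}$ and interchange the order of summation, which is legitimate because $a_n^2\sim 1/(\pi n)$ by Stirling and hence the double series converges absolutely. Separating the terms with $j<n$ from the diagonal term $j=n$, we obtain
\begin{align*}
\sum_{n=1}^\infty\frac{a_n^2 H_n^{(3)}}{2n-1}
=\sum_{n>j\ge 1}\frac{a_n^2}{(2n-1)\,j^3}\ +\ \sum_{n=1}^\infty\frac{a_n^2}{(2n-1)\,n^3}.
\end{align*}

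For the off-diagonal sum I would use the substitution $j^{-3}=8\,(2j)^{-3}$, which identifies it with $8\,V_1(3)$ in the notation of Example~\ref{eg-V1}, whose closed form is recorded in \eqref{equ-V1s=3}. For the diagonal sum I would likewise write $n^{-3}=8\,(2n)^{-3}$ and then apply the partial-fraction identity
\begin{align*}
\frac{1}{(2n-1)(2n)^3}=\frac{1}{2n-1}-\frac{1}{2n}-\frac{1}{(2n)^2}-\frac{1}{(2n)^3},
\end{align*}
which reduces it to a $\Q$-linear combination of the single-$\gam$-block sum $\sum_{n\ge1}a_n^2/(2n-1)$ evaluated in \eqref{equ-Gamma1} and the $\ga$-block sums $\sum_{n\ge1}a_n^2/(2n)^s$ for $s=1,2,3$ evaluated in \eqref{equ-anSqOver2n} of Example~\ref{eg-binn-sqr-single-ga-block}.

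Substituting these five explicit evaluations and collecting terms then yields the asserted formula. The only genuine work is the final simplification: the non-elementary constants present in the inputs---namely $\beta(4)$, $\Im\Li_3\Big(\frac{1+i}{2}\Big)$ and $\Im\Li_4\Big(\frac{1+i}{2}\Big)$, which occur in $V_1(3)$ and in $\sum_{n\ge1}a_n^2/(2n)^2$ and $\sum_{n\ge1}a_n^2/(2n)^3$---must cancel, leaving only $\pi$, $G$, $\log 2$ and $\zeta(3)$; verifying this cancellation and tracking the rational coefficients, equivalently feeding the intermediate level-four CMZV expression to Au's package \cite{Au2020}, is routine bookkeeping. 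I expect no conceptual obstacle here: once the inputs are in hand the identity is forced, and the cancellation of the weight-four contributions (those involving $\beta(4)$ and $\Im\Li_4\Big(\frac{1+i}{2}\Big)$) between the diagonal sum and $V_1(3)$ is exactly the kind of weight drop already illustrated throughout the examples of this section.
\end{proof}
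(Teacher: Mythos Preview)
Your proposal is correct and follows essentially the same route as the paper's proof: both split the sum into the diagonal piece $\sum_n a_n^2/((2n-1)n^3)$ and the off-diagonal piece identified with $8\,V_1(3)$, apply the same partial-fraction decomposition of $1/((2n-1)(2n)^3)$, and then invoke \eqref{equ-Gamma1}, \eqref{equ-V1s=3}, and the three $\ga$-block evaluations from Example~\ref{eg-binn-sqr-single-ga-block}. The paper simply asserts ``the corollary follows immediately'' where you spell out that the weight-four constants must cancel; this is indeed routine and your expectation is correct.
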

\begin{proof} By partial fractions we have
\begin{align*}
\frac1{(2n-1)(2n)^3}=\frac1{2n-1}-\frac1{(2n)^3}-\frac1{(2n)^2}-\frac1{2n}.
\end{align*}
Thus by \eqref{equ-Gamma1} and \eqref{equ-V1s=3}
\begin{align*}
\frac18\sum_{n=1}^\infty \frac{a_n^2 H_n^{(3)}}{2n-1}
=&\, \sum_{n=1}^\infty \frac{a_n^2}{(2n-1)(2n)^3}+\sum_{n>m>0}\frac{a_n^2}{(2n-1)(2m)^3}\\
=&\,\frac{2}{\pi} \Big(\frac{\pi}{2}-1\Big) +V_1(3)
-\sum_{n=1}^\infty \frac{a_n^2}{(2n)^3}-\sum_{n=1}^\infty \frac{a_n^2}{(2n)^2}-\sum_{n=1}^\infty \frac{a_n^2}{2n}
\end{align*}
Now the last three sums are given in Example \ref{eg-binn-sqr-single-ga-block}. The corollary follows immediately.
\end{proof}

\begin{ex}\label{eg-W1}
We consider series given by a $\gam$-$\ga$-block chain with $\ga$-block having weight two. We have
$W_1:=V_1$, and by  \eqref{endga-block} and \eqref{equ-binnSquare-gam-depthd}
\begin{align}\label{equ-W2}
W_2:=&\, \sum_{n>m> 0} \frac{a_{n}^2}{(2n-1)^2(2m)^2}
=\frac{2}{\pi}  \int_0^{\pi/2} \sin t\,dt\,\cot^2 t\,dt\,d(\sec t)  \cot t \,dt \Big(\csc t-\cot t\Big) \,dt \nonumber\\
=&\, -W_1+\int_0^{\pi/2}  \sin t\,dt\, (\sec t \,dt- dt\,d(\sec t) )  \cot t \,dt \Big(\csc t-\cot t\Big) \,dt   \nonumber\\
=&\, -W_1+\int_0^{\pi/2}  \cos t\,dt\,\csc t \,dt \Big(\csc t-\cot t\Big) \,dt   \nonumber\\
=&\, -W_1+\int_0^{\pi/2}  (1-\sin t)\csc t \,dt \Big(\csc t-\cot t\Big) \,dt \nonumber\\
=&\, \frac{2}{\pi}  \int_0^1 i (2\tx_{-1}-\tx_{-i}-\tx_{i})\ty \td_{-i,i}
+2(2\tx_{-1}-\tx_{-i}-\tx_{i})\td_{-1,1} - 2i (2\tx_{-1}-\tx_{-i}-\tx_{i}) \td_{-i,i}\nonumber\\
=&\,\frac{2}{\pi} \left( 4G+ \frac{\pi^2}{12} -\frac{3\pi^3}{16}+8\Im\Li_3\Big(\frac{1+i}2\Big)+\frac{\pi\log 2}{4} (3\log 2-8) \right)
 \approx 0.006455549.
\end{align}
For all $s\ge3$,
\begin{align*}
W_s:=&\, \sum_{n>m> 0} \frac{a_{n}^2}{(2n-1)^s(2m)^2}
=\frac{2}{\pi}  \int_0^{\pi/2} \sin t\,dt\,(\cot t\,dt)^{s-2} \cot^2 t\,dt\,d(\sec t)  \cot t \,dt \Big(\csc t-\cot t\Big) \,dt \\
=&\, -W_{s-1}+\frac{2}{\pi}\int_0^{\pi/2} \sin t\,dt\,(\cot t\,dt)^{s-2} (\sec t\,dt-dt\,d(\sec t))  \cot t \,dt \Big(\csc t-\cot t\Big) \,dt \\
=&\, -W_{s-1}+\frac{2}{\pi}N_{s-2} =(-1)^s W_2 +\frac{2}{\pi} \sum_{k=1}^{s-2} (-1)^{s-k} N_k,
\end{align*}
where
\begin{align*}
N_0=&\,  \int_0^{\pi/2} \sin t\,dt\,dt \, \csc t \,dt \Big(\csc t-\cot t\Big) \,dt
= \int_0^{\pi/2}  \cos t\,dt  \csc t \,dt\Big(\csc t-\cot t\Big) \,dt \\
=&\,  \int_0^{\pi/2}  (1-\sin t)\csc t \,dt\Big(\csc t-\cot t\Big) \,dt
=  \int_0^1(2\tx_{-1}-\tx_{-i}-\tx_{i})(  \td_{-1,1}-i\td_{-i,i})
\end{align*}
and for all $s\ge 1$
\begin{align*}
N_s=&\,  \int_0^{\pi/2} \sin t\,dt \,dt\,(\cot t\,dt)^s \, dt\,\csc t \,dt \Big(\csc t-\cot t\Big) \,dt \\
=&\,  \int_0^{\pi/2} \cos t\cot t\,dt \,(\cot t\,dt)^{s-1} dt\,\csc t \,dt \Big(\csc t-\cot t\Big) \,dt \\
=&\, -N_{s-1}+\int_0^{\pi/2} \csc t\,dt \,(\cot t\,dt)^{s-1} dt\,\csc t \,dt \Big(\csc t-\cot t\Big) \,dt \\
=&\, -N_{s-1}-(-1)^s i \int_0^1(2\tx_{-1}-\tx_{-i}-\tx_{i})\td_{-1,1}\td_{-i,i}\ty^{s-1} \td_{-1,1} \\
=&\, (-1)^s N_0-(-1)^s\sum_{j=0}^{s-1} i \int_0^1(2\tx_{-1}-\tx_{-i}-\tx_{i})\td_{-1,1}\td_{-i,i}\ty^j \td_{-1,1}.
\end{align*}
Hence
\begin{align*}
W_s= &\,(-1)^s W_2 +\frac{2}{\pi} \sum_{k=1}^{s-2} (-1)^{s-k} N_k\\
= &\,(-1)^s W_2 +(-1)^s\frac{2}{\pi}\left( (s-2)N_0
-\sum_{j=0}^{s-3}   (s-2-j) i \int_0^1(2\tx_{-1}-\tx_{-i}-\tx_{i})\td_{-1,1}\td_{-i,i}\ty^j \td_{-1,1}\right).
\end{align*}
In particular,
\begin{align*}
W_3=&\,-W_2-\frac{2}{\pi} \left(N_0-i\int_0^1(2\tx_{-1}-\tx_{-i}-\tx_{i})\td_{-1,1}\td_{-i,i}\td_{-1,1}\right) \\
=&\,  \frac{2}{\pi}\left(6\gb(4)+\frac{3\pi^3}{16} - 8\Im\Li_3\Big(\frac{1+i}2\Big)-6G+ \frac{\pi^2}{24} (2G-3)
 - \frac{3\pi\log^2 2}{4} +\log^3 2 -\frac{7}{4}\ze(3) \right)
 \approx 0.001685043.
\end{align*}
\end{ex}

\begin{ex}\label{eg-Y1}
We consider series given by a $\gam$-$\ga$-$\ga$-block chain with both $\ga$-blocks having weight one. We have
by  \eqref{midgaga-block}, \eqref{endga-block} and \eqref{equ-binnSquare-gam1}
\begin{align*}%\label{equ-Y1}
Y_1:=&\, \sum_{n>k>m> 0} \frac{a_{n}^2}{(2n-1)(2k)(2m)} \\
=&\, \frac{2}{\pi}  \int_0^{\pi/2} \cos t\,dt \,d(\sec t) \Big(\csc t \,dt \circ\sec t-\cot t \,dt\Big) \,\Big(\csc t-\cot t\Big) \,dt  \nonumber\\
=&\, \frac{2}{\pi}  \int_0^{\pi/2}\bigg( dt \,\csc t \,dt \Big(\sec t\csc t-\csc t\Big)\,dt
    - dt \,\cot t \,dt \, \Big(\csc t-\cot t\Big)\,dt    \nonumber\\
 &\,-\cos t\,dt \Big(\sec t \csc t\,dt \circ\sec t- \csc t\, dt\Big) \,\Big(\csc t-\cot t\Big) \,dt  \bigg) \nonumber\\
=&\, \frac{2}{\pi}  \int_0^{\pi/2}\bigg( dt \,\csc t \,dt \Big(\sec t\csc t-\csc t\Big)\,dt
    - dt \,\cot t \,dt \, \Big(\csc t-\cot t\Big)\,dt     \nonumber\\
 &\,-\Big(1-\sin t\Big)\Big(\sec t \csc t\,dt \circ\sec t- \csc t\, dt\Big) \,\Big(\csc t-\cot t\Big) \,dt   \bigg)\nonumber\\
=&\, \frac{2}{\pi}  \int_0^{\pi/2}\bigg( dt \,\csc t \,dt \Big(\sec t\csc t-\csc t\Big)\,dt
    - dt \,\cot t \,dt \, \Big(\csc t-\cot t\Big)\,dt    \nonumber\\
 &\,   -\Big(\sec t \csc t-\sec t\Big)\,dt\,\Big(\sec t\csc t-\csc t\Big) \,dt +\Big(\csc t-1\Big)\,dt\,\Big(\csc t-\cot t\Big) \,dt   \bigg)\nonumber\\
=&\, \frac{2}{\pi}\int_0^1   \bigg( i(\ta+2\tx_{-1})\td_{-1,1}\td_{-i,i}+i(2\tx_{-1}-\tx_{-i}-\tx_{i})\ty\td_{-i,i}
-(\ta+2\tx_{-1})(\tx_{-1}+\tx_{1}) \nonumber\\
 &\, \hskip2cm
 +(2\tx_{-1}-\tx_{-i}-\tx_{i})(\td_{-1,1} -i\td_{-i,i}) \bigg) \nonumber\\
=&\,\frac{2}{\pi} \left(
16\Im\Li_3\Big(\frac{1+i}2\Big)- \frac{19\pi^3}{48}  +
 \frac{\pi^2}8+ \frac{\pi}2\log2(\log2-2) - 2\log^2 2 + G(4\log 2+2) \right)  \\
  \approx &\, 0.0441495729. \nonumber
%\[Pi]^2/8 - (19 \[Pi]^3)/48 + 16 Im[PolyLog[3, 1/2 + I/2]] + 1/2 \[Pi] (-2 + Log[2]) Log[2] - 2 Log[2]^2 + Catalan (2 + Log[16])
\end{align*}
Similarly,
\begin{align*}%\label{equ-Y2}
Y_2:=&\, \sum_{n>k>m> 0} \frac{a_{n}^2}{(2n-1)^2(2k)(2m)} \nonumber\\
=&\, \frac{2}{\pi}  \int_0^{\pi/2} \sin t\,dt \,\cot^2 t\,dt \,d(\sec t) \Big(\csc t \,dt \circ\sec t-\cot t \,dt\Big) \,\Big(\csc t-\cot t\Big) \,dt  \nonumber\\
=&\, \frac{2}{\pi}  \int_0^{\pi/2}\bigg(-\sin t\,dt\,dt \,d(\sec t) \Big(\csc t \,dt \circ\sec t-\cot t \,dt\Big) \,\Big(\csc t-\cot t\Big) \,dt    \nonumber\\
 &\,-\cos t\,dt \,d(\sec t) \Big(\csc t \,dt \circ\sec t-\cot t \,dt\Big) \,\Big(\csc t-\cot t\Big) \,dt  \nonumber\\
 &\,+ \sin t\,dt \,\sec t\,dt \Big(\csc t \,dt \circ\sec t-\cot t \,dt\Big) \,\Big(\csc t-\cot t\Big) \,dt \nonumber\\
=&\,-Y_1+ \frac{2}{\pi}  \int_0^{\pi/2}\bigg(-dt \Big(\csc t \,dt \circ\sec t-\cot t \,dt\Big) \,\Big(\csc t-\cot t\Big) \,dt  \nonumber\\
&\,+\cos t\,dt  \Big(\sec t\csc t \,dt \circ\sec t-\csc t \,dt\Big) \,\Big(\csc t-\cot t\Big) \,dt  \nonumber\\
 &\,+  dt \Big(\csc t \,dt \circ\sec t-\cot t \,dt\Big) \,\Big(\csc t-\cot t\Big) \,dt \nonumber\\
=&\,-Y_1+ \frac{2}{\pi}  \int_0^{\pi/2}\bigg(\Big(1-\sin t\Big) \Big(\sec t\csc t \,dt \circ\sec t-\csc t \,dt\Big) \,\Big(\csc t-\cot t\Big) \,dt  \nonumber\\
=&\,-Y_1+ \frac{2}{\pi}  \int_0^{\pi/2}\bigg( \Big(\sec t(\csc t-1)\,dt \circ\sec t+(1-\csc t )\,dt\Big) \,\Big(\csc t-\cot t\Big) \,dt  \nonumber\\
=&\, -Y_1+\frac{2}{\pi}\int_0^1   \bigg( (\ta+2\tx_{-1})(\tx_{-1}+\tx_{1})
 +(2\tx_{-1}-\tx_{-i}-\tx_{i})(i\td_{-i,i} -\td_{-1,1}) \bigg) \nonumber\\
=&\,\frac{2}{\pi} \left(
-16\Im\Li_3\Big(\frac{1+i}2\Big)+ \frac{19\pi^3}{48}-
 \frac{\pi^2}4+ \frac{\pi}2(4\log2-\log^2 2)   + 4\log^2 2 - 4G(\log 2+2) \right) \\
 \approx  &\,0.002234785. \nonumber
%-(\[Pi]^2/4) + (19 \[Pi]^3)/48 - 4 Catalan (1 + Log[2]) + 4 (-4 Im[PolyLog[3, 1/2 + I/2]] +    Log[2]^2) + \[Pi] (-(1/2) Log[2]^2 + Log[4])
\end{align*}
For all $s\ge3$, by \eqref{endgb-block} and \eqref{equ-binnSquare-gam-depthd}
\begin{align*}
Y_s:=&\, \sum_{n>k>m> 0} \frac{a_{n}^2}{(2n-1)^s(2k)(2m)} \\
=&\, \frac{2}{\pi}  \int_0^{\pi/2} \sin t\,dt\,(\cot t\,dt)^{s-2} \cot^2 t\,dt\,d(\sec t)  \Big(\csc t \,dt \circ\sec t-\cot t \,dt\Big) \,\Big(\csc t-\cot t\Big) \,dt  \\
=&\, -Y_{s-1}+\frac{2}{\pi} \int_0^{\pi/2}  \sin t\,dt\,(\cot t\,dt)^{s-2} \Big(\sec t\,dt-dt\,d(\sec t)\Big)  \Big(\csc t \,dt \circ\sec t-\cot t \,dt\Big) \Big(\csc t-\cot t\Big) \,dt \\
=&\, -Y_{s-1}+\frac{2}{\pi} \int_0^{\pi/2}  \sin t\,dt\,(\cot t\,dt)^{s-2}\,dt\, \Big(\sec t\csc t \,dt \circ\sec t-\csc t \,dt\Big) \Big(\csc t-\cot t\Big) \,dt \\
=&\, -Y_{s-1}+\frac{2}{\pi}M_{s-2} =(-1)^s Y_2 +\frac{2}{\pi} \sum_{k=1}^{s-2} (-1)^{s-k} M_k,
\end{align*}
where
\begin{align*}
M_0=&\,  \int_0^{\pi/2} \sin t\,dt\, \,dt\,\Big(\sec t\csc t \,dt \circ\sec t-\csc t \,dt\Big) \Big(\csc t-\cot t\Big) \,dt\\
=&\,  \int_0^{\pi/2} \cos t\,dt\,\Big(\sec t\csc t \,dt \circ\sec t-\csc t \,dt\Big) \Big(\csc t-\cot t\Big) \,dt\\
=&\,  \int_0^{\pi/2} \Big(1-\sin t\Big) \Big(\sec t\csc t \,dt \circ\sec t-\csc t \,dt\Big) \Big(\csc t-\cot t\Big) \,dt\\
= &\,  \int_0^{\pi/2}   \Big((\sec t\csc t -\sec t) \,dt \circ\sec t+(1-\csc t) \,dt\Big) \Big(\csc t-\cot t\Big) \,dt \\
=&\,  \int_0^1 (\ta+2\tx_{-1})(\tx_{-1}+\tx_{1})+(2\tx_{-1}-\tx_{-i}-\tx_{i}) (i\td_{-i,i}-\td_{-1,1})
\end{align*}
and for all $s\ge 1$
\begin{align*}
M_s=&\, \int_0^{\pi/2} \sin t\,dt\,(\cot t\,dt)^s \,dt\, \Big(\sec t\csc t \,dt \circ\sec t-\csc t \,dt\Big) \Big(\csc t-\cot t\Big) \,dt \\
=&\,  \int_0^{\pi/2}\cos t\cot t\,dt (\cot t\,dt)^{s-1} \,dt\, \Big(\sec t\csc t \,dt \circ\sec t-\csc t \,dt\Big) \Big(\csc t-\cot t\Big) \,dt \\
=&\, -M_{s-1}+\int_0^{\pi/2} \csc t\,dt \,(\cot t\,dt)^{s-1} dt\,dt\,\Big(\sec t\csc t \,dt \circ\sec t-\csc t \,dt\Big)\Big(\csc t-\cot t\Big) \,dt \\
=&\, -M_{s-1}+(-1)^s i\int_0^1 \Big( (\ta+2\tx_{-1})(\ta+\tx_{-1}+\tx_{1})-(2\tx_{-1}-\tx_{-i}-\tx_{i})\td_{-1,1} \Big) \td_{-i,i}\ty^{s-1} \td_{-1,1}\\
=&\, (-1)^s M_0-(-1)^s \sum_{j=0}^{s-1} \int_0^1 \Big( (\ta+2\tx_{-1})(\ta+\tx_{-1}+\tx_{1})-(2\tx_{-1}-\tx_{-i}-\tx_{i})\td_{-1,1} \Big)\td_{-i,i}\ty^j \td_{-1,1}.
\end{align*}
Hence
\begin{align*}
Y_s= &\,(-1)^s Y_2 +\frac{2}{\pi} \sum_{k=1}^{s-2} (-1)^{s-k} M_k
=  (-1)^s Y_2 +(-1)^s\frac{2}{\pi}\bigg( (s-2)M_0     \\
 &\, \hskip1cm -\sum_{j=0}^{s-3}   (s-2-j) i \int_0^1\Big( (\ta+2\tx_{-1})(\ta+\tx_{-1}+\tx_{1})-(2\tx_{-1}-\tx_{-i}-\tx_{i})\td_{-1,1} \Big)\td_{-i,i}\ty^j \td_{-1,1} \bigg).
\end{align*}
In particular,
\begin{align*}
Y_3=&\,-Y_2-\frac{2}{\pi} \left(M_0-\int_0^1\Big( (\ta+2\tx_{-1})(\ta+\tx_{-1}+\tx_{1})-(2\tx_{-1}-\tx_{-i}-\tx_{i})\td_{-1,1} \Big) \td_{-i,i} \td_{-1,1} \right)
\\
=&\,  \frac{2}{\pi}\bigg( \frac{3\pi^2}{8} - 10 \gb(4) + 16 \Im\Li_3\Big(\frac{1+i}2\Big)
    + 8\Im\Li_4\Big(\frac{1+i}2\Big)+ 8\Im\Li_3\Big(\frac{1+i}2\Big)\log 2 \\
&\,  -\frac{3\pi^3}{96} (38 + 15\log 2)  - 6\log^2 2 +G\Big(6 - \frac{\pi^2}{4}+ 4 \log^2 2 + 4\log 2\Big) \\
&\, + \pi\Big(\frac12\log^2 2 -  \frac{5\log^3 2}{24}  - 3\log 2 +\frac{7}{4}\ze(3) \Big) \bigg)
 \approx 0.000241778756.
\end{align*}
\end{ex}

\begin{ex}
We consider series given by a $\gam^{\circ d}$-$\gb^\star$-block chain, with all $\gam$-blocks having
weight one. When $d=1$, by \eqref{endgb-block-Depth1} and \eqref{equ-binnSquare-gam1}
\begin{align*}
X_1:=&\, \sum_{n>m\ge 0} \frac{a_{n}^2}{(2n-1)(2m+1)}
=\frac{2}{\pi}  \int_0^{\pi/2} \cos t\,dt \,d(\tan t) \,dt\\
=&\, \frac{2}{\pi}  \int_0^{\pi/2}\bigg( \sin t\,dt\,dt -\cos t\,dt\, \tan t\, dt\bigg)
=  \frac{2}{\pi}  \int_0^{\pi/2}  \Big( \cos t\,dt -(1-\sin t)\, \tan t\, dt \Big)  \\
=&\, \frac{2}{\pi}  \int_0^{\pi/2}   (\sec t-\tan t) \, dt
= \frac{-2}{\pi} \Big(\int_0^1  (\tx_{-i}+\tx_{i}) \Big)
= \frac{2\log 2}{\pi}  .
\end{align*}
Suppose $d\ge 2$. Then by \eqref{equ-gam-d=1},  \eqref{endgb-block-Depth1} and \eqref{equ-binnSquare-gam1}
\begin{align*}
X_d:=&\, \sum_{n_1>\cdots>n_d>m\ge 0} \frac{a_{n_1}^2}{(2n_1-1)\cdots (2n_d-1)(2m+1)}
=\frac{2}{\pi}  \int_0^{\pi/2} \cos t\,dt\, (\tan t\, dt)^{d-1} \,d(\tan t) \,dt\\
=&\, \frac{2}{\pi}  \int_0^{\pi/2}\bigg( \cos t\,dt\, (\tan t\, dt)^{d-2} (\sec^2t-1) \,dt \,dt -\cos t\,dt\, (\tan t\, dt)^{d} \bigg)\\
=&\,  X_{d-1}-\frac{2}{\pi} (E_{d-2}+A_d),
\end{align*}
where $A_d$ is defined by \eqref{defn-Ad}, $E_0=\pi^2/8-1$ and for all $d\ge1$
\begin{align*}
E_d=&\,  \int_0^{\pi/2} \cos t\,dt\, (\tan t\, dt)^{d} \,dt\,dt
=  \int_0^{\pi/2}  (1-\sin t)\tan t\, dt\, (\tan t\, dt)^{d-1} \,dt \,dt  \\
=&\,E_{d-1}+  \int_0^{\pi/2}  (\tan t-\sec t)\, dt\, (\tan t\, dt)^{d-1} \,dt\,dt
= E_{d-1}+(-1)^d   \int_0^1 \td_{-i,i}^2 \tz^{d-1} (\tx_{-i}+\tx_{i})  \\
=&\, E_0-\sum_{j=0}^{d-1} (-1)^{j} \int_0^1 \td_{-i,i}^2 \tz^j (\tx_{-i}+\tx_{i}),
\end{align*}
Thus for all $d\ge 2$, by \eqref{defn-Ad}
\begin{align*}
X_d=&\,X_1- \frac{2}{\pi} \left(\sum_{k= 0}^{d-2}  E_k+\sum_{k=2}^{d} A_k \right)
=\frac{2}{\pi} \bigg(d\log(2) -(d-1)\frac{\pi^2}{8}\\
&\, +\sum_{j=0}^{d-3} (-1)^{j}(d-2-j) \int_0^1   \td_{-i,i}^2 \tz^j (\tx_{-i}+\tx_{i})
- \sum_{j=1}^{d-1} (-1)^{j}(d-j) \int_0^1 \tz^j (\tx_{-i}+\tx_{i})   \bigg).
\end{align*}
We see the weight drops by 1, as predicted by Thm.~\ref{thm-binnSquare}\ref{enu:thm-binnSquare-gen}.
In particular,
\begin{align*}
X_2=\frac{2}{\pi}\left( 2\log 2-\frac{\pi^2}{12}-\frac{1}{2} \log^2 2\right) \approx 0.20601068.
\end{align*}
\end{ex}

\end{document}